\newcommand{\op}{\textup{op}}
\newcommand{\id}{\textup{id}}
\newcommand{\C}{\mathcal{C}}
\newcommand{\D}{\mathcal{D}}
\newcommand{\Fun}{\textup{Fun}}
\numberwithin{equation}{section}
\theoremstyle{definition}
\newtheorem{defn}[equation]{Definition}
\newtheorem{rmk}[equation]{Remark}
\newtheorem{cor}[equation]{Corollary}
\theoremstyle{plain}
\newtheorem{thm}[equation]{Theorem}
\newtheorem{lem}[equation]{Lemma}
\newtheorem{prop}[equation]{Proposition}
\newcommand{\var}{\textup{Var}}
\newcommand{\smvar}{\textup{SmVar}}
\newcommand{\compvar}{\textup{Comp}}
\newcommand{\comp}{\textup{Comp}}
\newcommand{\smcomp}{\textup{SmComp}}
\newcommand{\smcompvar}{\textup{SmComp}}
\newcommand{\Span}{\textup{Span}}
\renewcommand{\span}{\Span}
\newcommand{\shv}{\textup{Sh}}
\newcommand{\sh}{\textup{Sh}}
\newcommand{\map}{\textup{Map}}
\renewcommand{\hom}{\textup{Hom}}
\newcommand{\fib}{\textup{fib}}
\newcommand{\set}{\textup{Set}}
\newcommand{\colim}{\textup{colim}}
\renewcommand{\bar}{\overline}
\newcommand{\X}{\mathcal{X}}
\newcommand{\Y}{\mathcal{Y}}
\newcommand{\psh}{\textup{PSh}}
\newcommand{\hsh}{\textup{HSh}}
\newcommand{\hshv}{\textup{HSh}}
\renewcommand{\S}{\mathcal{S}}
\newcommand{\sset}{\textup{sSet}}
\newcommand{\exc}{\textup{exc}}
\newcommand*{\da@rightarrow}{\mathchar"0\hexnumber@\symAMSa 4B }
\newcommand*{\da@leftarrow}{\mathchar"0\hexnumber@\symAMSa 4C }
\newcommand*{\xdashrightarrow}[2][]{
  \mathrel{
    \mathpalette{\da@xarrow{#1}{#2}{}\da@rightarrow{\,}{}}{}
  }
}
\newcommand{\xdashleftarrow}[2][]{
  \mathrel{
    \mathpalette{\da@xarrow{#1}{#2}\da@leftarrow{}{}{\,}}{}
  }
}
\newcommand*{\da@xarrow}[7]{

  \sbox0{$\ifx#7\scriptstyle\scriptscriptstyle\else\scriptstyle\fi#5#1#6\m@th$}
  \sbox2{$\ifx#7\scriptstyle\scriptscriptstyle\else\scriptstyle\fi#5#2#6\m@th$}
  \sbox4{$#7\dabar@\m@th$}
  \dimen@=\wd0 
  \ifdim\wd2 >\dimen@
    \dimen@=\wd2 
  \fi
  \count@=2 
  \def\da@bars{\dabar@\dabar@}
  \@whiledim\count@\wd4<\dimen@\do{
    \advance\count@\@ne
    \expandafter\def\expandafter\da@bars\expandafter{
      \da@bars
      \dabar@ 
    }
  }
  \mathrel{#3}
  \mathrel{
    \mathop{\da@bars}\limits
    \ifx\\#1\\
    \else
      _{\copy0}
    \fi
    \ifx\\#2\\
    \else
      ^{\copy2}
    \fi
  }
  \mathrel{#4}
}
\title[A descent principle for compactly supported extensions]{A descent principle for compactly supported extensions of functors}
\author{Josefien Kuijper}
\email{josefien.kuijper@math.su.se}
\address{Matematiska instutionen\\Stockholms universitet\\106 91 Stockholm\\Sweden }
\subjclass{14F99 (primary), 18F10, 55U99, 14F42 (secondary)}
\thanks{The author is supported by ERC-2017-STG 759082}
\keywords{proper cdh descent, cohomology with compact support, varieties}
\begin{document}

\begin{abstract}
A characteristic property of cohomology with compact support is the long exact sequence that connects the compactly supported cohomology groups of a space, an open subspace and its complement. Given an  arbitrary cohomology theory of algebraic varieties, one can ask whether a compactly supported version exists, satisfying such a long exact sequence. This is the case whenever the cohomology theory satisfies descent for abstract blowups (also known as proper cdh descent). We make this precise by proving an equivalence between certain categories of hypersheaves. We show how several classical and non-trivial results, such as the existence of a unique weight filtration on cohomology with compact support, can be derived from this theorem.
\end{abstract} 

\maketitle

\section{Introduction}
A characteristic property of cohomology with compact support is the existence of a long exact sequence
\begin{equation}\label{eq:les_compact_support}
    \cdots \to H^n_c(U) \to H^n_c(X) \to H^n_c(X\setminus U) \to H^{n+1}_c(U) \to \cdots  
\end{equation}
for $U\subseteq X$ an open subspace. Given an arbitrary cohomology theory of, say, algebraic varieties, one can wonder if it makes sense to \textit{define} a compactly supported cohomology theory such that a long exact sequence like (\ref{eq:les_compact_support}) exists by construction. Let $\var$ denote the category of algebraic varieties over a field $k$, and let $\C$ be a triangulated category with a t-structure. We assume that a cohomology theory arises from a functor 
$$F:\var^\op \to \C$$
as the homotopy groups of $F(X)$, denoted $H^n(X)$, in the heart of $\C$. For a variety $U$, let $U \to X$ be an open embedding where $X$ is compact (i.e., proper over $k$). If we define the objects $H^*_c(U)$ to be the homotopy groups of
$$F_c(U):=\textup{fib}(F(X) \to F(X \setminus U)),$$ 
then this implies the long exact sequence (\ref{eq:les_compact_support}), and for a compact variety $X$ we have $H^*_c(X)=H^*(X)$. 
Of course, a priori $H^*_c(U)$ depends on the choice of the compactification $X$. We ask under what conditions on the invariant $F:\var \to \C$ this ``cohomology groups with compact support'' $H^*_c(-)$ is well defined. An answer to this question is the condition that $F$ sends abstract blowup squares to homotopy pullback squares in $\C$, as is explained in for example \cite{kelly}; we call this \textit{descent for abstract blowups}. Recall that an abstract blowup square is a pullback square
\begin{center}
\begin{tikzcd}
      E \arrow[d] \arrow[r] & Y \arrow[d, "p"]\\
    C \arrow[r, "i"] & X  
\end{tikzcd}
\end{center}
where $p$ is proper, $i$ a closed immersion and the induced morphism $p:Y \setminus E \to X \setminus C $ is an isomorphism.

We promote this answer to an equivalence of $\infty$-categories of $\C$-valued hypersheaves, where $\C$ is a complete and cocomplete pointed $\infty$-category, using the theory of Grothendieck topologies associated to cd-structures developed by Voevodsky in \cite{voevodsky}. With this theorem we can recover several non-trivial results, using surprisingly little algebraic geometry. One example is the weight complex $W(X)$ of an algebraic variety $X$, which is a bounded complex of pure effective Chow motives, constructed by Gillet and Soulé in \cite{gilletsoule} as an object in the homotopy category of complexes of motives $\textup{Hot}(\mathbf{M})$. A lift of the weight complex from $\textup{Hot}(\mathbf{M})$ to the stable infinity category $K^b(\mathbf{M})$ of complexes of motives up to chain homotopy has been constructed by Bondarko in \cite{bondarko}. With our main theorem, over a field of characteristic zero, it suffices to define the weight complex for smooth and compact varieties, which is trivial. Gillet and Soulé use the weight complex to define a filtration on compactly supported singular cohomology with arbitrary coefficients.
This filtration is trivial for non-singular compact varieties, and coincides with Deligne's weight filtration \cite{hodge_iii} when working over $\mathbb{Q}$. Moreover, Gillet and Soulé define a version of algebraic K-theory with compact support, and show that it has a well-defined filtration which is trivial for non-singular compact varieties. An alternative proof is given by Pascual and Rubió Pons in \cite{p-rp}, using results from Guillén and Navarro-Aznar in \cite{g-na}. Since these constructions are again trivial for smooth and compact varieties, they follow easily from our main theorem as well. Furthermore, the formulation of our result brings to light a mysterious parallel to Bittner's presentation of the Grothendieck ring of varieties \cite{bittner}.

\subsection{Main result}
Suppose $F_c(-)$ is an invariant defined on all algebraic varieties, taking values in a stable $\infty$-category, and contravariantly functorial in morphisms between compact varieties, such that
\begin{equation*}
    F_c(U) = \fib(F_c(X) \to F_c(X\setminus U)) 
\end{equation*}
whenever $U$ is an open subvariety of a compact variety $X$.
It immediately follows that $F_c$ is covariantly functorial in open immersions between arbitrary varieties. Moreover, from Nagata's compactification theorem it follows that $F_c$ is contravariantly functorial in proper morphisms between arbitrary varieties. This motivates the following definition. Let $\span$ be the category with as objects varieties over a field $k$, where a morphism from $X$ to $Y$ is given by a proper map defined on an open subvariety of $X$, or in other words, a span
$$X \xhookleftarrow{i} U \xrightarrow{f} Y $$
where $i$ is an open immersion and $f$ a proper morphism. We denote such a morphism by 
$ X \dashrightarrow Y. $ For $\C$ a pointed $\infty$-category and $F:\span^\op \to \C$ an $\infty$-presheaf, we say that $F$ has the \textit{localisation property} if $F$ maps the sequence
$$X \setminus U \dashrightarrow X \dashrightarrow U$$
to a fibre sequence for any open subvariety $U$ of a variety $X$.

We prove the following theorem, which can be seen as a modernisation and a generalisation of \cite[(2.2.2)]{g-na}. 
\begin{thm}\label{thm:main_intro}
For $\C$ a complete and cocomplete stable $\infty$-category with zero object $*$, consider the following $\infty$-categories:
\begin{itemize}
    \item[(A)] the category of functors $F: \smcomp^\op \to \C$ that satisfy descent for blowup squares and $F(\emptyset)=*$,

    \item[(B)] the category of functors $F: \comp^\op \to \C$ that satisfy descent for abstract blowup squares and $F(\emptyset)=*$,
    \item[(C)] the category of functors $F:\span^\op \to \C$ that satisfy the localisation property.
\end{itemize}
Over a field of characteristic zero, the categories (A) and (B) are equivalent, and over an arbitrary field, the categories (B) and (C) are equivalent.
\end{thm}

\begin{rmk}
  Guillén and Navarro-Aznar show that a functor defined on smooth and compact varieties, compatible with blowups, can be extended uniquely to the category of all varieties and proper maps, if the functor takes values in the homotopy category $Ho\mathcal{D}$ of a \textit{cohomological descent category} $\mathcal{D}$. The  extended functor again takes values in $Ho\D$.  A difference with our result is that we extend functors not to the category of varieties and proper maps, but to the larger category $\span$ which captures not only contravariance in proper maps, but also covariance in open immersions. 
Another advantage of our theorem is that any mention of cohomological descent categories is avoided. A cohomological descent category $\D$ is equipped with simple functors $s:\textup{Fun}(\square^n,\D) \to \D$, which play the role of a ``total complex". We replace $\D$ by a complete and cocomplete pointed $\infty$-category $\C$, and the simple functor by the $\infty$-categorical limit. Moreover, we start with a functor taking values in $\C$ instead of its homotopy category, and the extended functor again takes values in $\C$. For these reasons our result neither implies nor is implied by Theorem (2.2.2) of Guillén and Navarro-Aznar. See also \cite[Remark 3.2.6]{Abdo} about a modernised version of \cite[(2.1.5)]{g-na}, which is proven by Abdó Roig Maranges in his PhD thesis. \end{rmk}

For the equivalence between (A) and (B) in Theorem \ref{thm:main_intro}, in fact $\C$ is only required to be a complete $\infty$-category. The equivalence between (B) and (C) holds if $\C$ is an arbitrary complete and cocomplete pointed $\infty$-category, if in (C) one adds the condition that the functors $\span^\op \to\C$ satisfy descent for abstract blowups; if $\C$ is stable, this is implied by localisation property. Since the proof relies on Nagata's compactification theorem, the equivalence between (B) and (C) holds when instead of varieties, one works with separated finite type Noetherian schemes over a separated finite type Noetherian base scheme.

The categories of functors that occur in Theorem \ref{thm:main_intro} coincide with (sub)categories of hypersheaves. The topologies considered on $\smcomp$ and $\comp$ are variants of the well-studied proper cdh-topology on the category of varieties, as defined in \cite{voevodskycdh}. To our knowledge, the topology that we consider on $\span$ is new, and has some interesting properties; for example, locally every variety is not only smooth, but also compact.

A mystery yet to be explained, is that the categories in Theorem \ref{thm:main_intro} bear a remarkable correspondence to different presentations of the Grothendieck ring of varieties over a field of characteristic zero. This ring, denoted $K_0(\var)$, can be given by the following sets of generators and relations, where in each case the product is given by the product of varieties.
\begin{itemize}
\item[(i)] Generators are smooth and compact varieties, modulo the relations $[\emptyset] =0$ and $[E_C X]+[X] = [Bl_C X] + [C]$ whenever $C$ is a closed subvariety of $X$, $Bl_C X$ is the blowup of $X$ along $C$, and $E_C X$ is the exceptional divisor of the blowup.
    \item[(ii)] Generators are compact varieties, modulo the relations $[\emptyset]=0$ and  $[E] + [X] = [C] + [Y]$ for 
    \begin{center}
        \begin{tikzcd}
           E \arrow[d] \arrow[r]& Y \arrow[d]\\
           C \arrow[r] & X
        \end{tikzcd}
    \end{center}
    an abstract blowup square.
    \item[(iii)] Generators are varieties, modulo the relation $[U] + [X\setminus U] = [X]$ where $U$ is an open subvariety of $X$.
\end{itemize}
The first presentation is Bittner's presentation, the third one is the classical definition. Presentations (ii) and (iii) are equivalent over arbitrary $k$, as we will show in Propostion \ref{propreprvanringofvar}.

\subsection{Outline of the argument}

The categories (A), (B) and (C) in Theorem \ref{thm:main_intro} coincide with (sub)categories of $\C$-valued hypersheaves on $\smcomp,$ $\comp$ and $\span$ respectively, endowed with a suitable Grothendieck topology. In the case of $\smcomp$ and $\comp$, this is the topology associated to a cd-structure, as defined in \cite{voevodsky}, where the cd-structures are given by blowup squares and abstract blowup squares respectively. These cd-structures are complete and regular, implying that sheaves for the associated topologies are exactly the presheaves that send $\emptyset$ to the point and distinguished squares to pullback squares. On $\span$ we can define a cd-structure that consists of abstract blowup squares and \textit{localisation squares}
\begin{center}
    \begin{tikzcd}
       X \setminus U \arrow[d, dashed] \arrow[r,dashed] & X \arrow[d,dashed] \\
       \emptyset \arrow[r,dashed]& U 
    \end{tikzcd}
\end{center}
for $U \subseteq X$ an open subvariety. The category (C) is the category of presheaves on $\span$ that send these distinguished squares to pullback squares, and satisfy $F(\emptyset)=*$. 

However, a complication is that in $\span$ the empty variety $\emptyset$ is a very non-strict initial object. In the topology associated to a cd-structure on any category with an initial object $\emptyset$, by definition the empty sieve covers $\emptyset$. Since in $\span$ there is a morphism $X \dashrightarrow \emptyset$ for any variety $X$, this implies that for such a topology on $\span$ the empty sieve also covers every $X$, so all sheaves for this topology are trivial. To circumvent this, we define the \textit{c-topology} associated to a cd-structure on any category, which does not contain empty covers. We show that if a cd-structure is \textit{c-complete} and \textit{c-regular}, then sheaves for the associated c-topology are exactly the presheaves that send distinguished squares to pullback squares.

If a cd-structure is bounded in addition to being complete and regular, then the hypersheaves for the associated topology are exactly the space-valued presheaves that send $\emptyset$ to a contractible space and distinguished squares to homotopy pullback squares. We define a different criterion on cd-structures which makes this true, namely \textit{being compatible with a dimension function}. This criterion is easy to state and often easy to apply. We also prove the analogous statement for hypersheaves for the c-topology associated to a cd-structure. 

Over a field of characteristic zero, the inclusion $\smcomp\to \comp$ induces an equivalence between categories of sheaves of sets, which implies the equivalence of categories (A) and (B) for any complete $\infty$-category $\C$. However, it can be shown that $\comp \to \span$ is \textit{not} a weakly dense morphism of sites as defined in \cite{denseness}, therefore it does not induce an equivalence of categories of set-valued sheaves. Nonetheless, there is an equivalence of categories of hypersheaves with values in any complete and \textit{pointed} $\infty$-category. Via a general construction, we define a category $\comp_0$ of which $\comp$ is a subcategory, such that in $\comp_0$ there is a morphism $X \to \emptyset$ for every compact variety $X$. We show that the embedding $\comp \to \comp_0$ induces an equivalence of categories of presheaves with values in a cocomplete pointed $\infty$-category $\C$ that send $\emptyset$ to $*$. This implies the equivalence for $\C$-valued hypersheaves that send $\emptyset$ to $*$. Lastly we show that the inclusion $\comp_0 \to \span$ induces an equivalence again at the level of sheaves of sets, implying an equivalence of categories of hypersheaves with values in any complete $\infty$-category. \\

The paper is organised as follows. In Section \ref{section:coarse_topology} and \ref{section:dimension} we modify Voevodsky's theory of topologies associated to cd-structures. In section \ref{section:hypersheaves} we deal with technicalities surrounding hypersheaves with values in a complete $\infty$-category. In Section \ref{section:pointed_category} we give the general construction that results in $\comp_0$ when applied to $\comp$, and prove that the desired equivalence holds between categories of pointed $\infty$-presheaves. In Section \ref{section:sites_with_blowups} we give precise definitions of the cd-structures that we use, and show that they have the desired properties as defined in Section \ref{section:coarse_topology} and Section \ref{section:dimension}. Lastly, in Section \ref{sectmain} we show that inclusions $\smcomp \to \comp$ and $\comp_0 \to \span$ induce equivalences already on the level of sheaves of sets. We summarise the intermediate results and show how they imply Theorem \ref{thm:main_intro} as stated in the introduction. We end with a short proof of the equivalence of the presentations (ii) and (iii) of the Grothendieck ring of varieties. In Section \ref{sectionapp} we discuss some implications of our main theorem.

\subsection{Conventions}
Throughout this paper, a variety is a reduced scheme of finite type over a fixed field $k$. A compact variety is a variety that is proper over $k$. The words category, sheaf and presheaf will always refer to 1-categorical concepts, unless specified otherwise. Simplicial (pre)sheaves are (pre)sheaves of simplicial sets, or equivalently, simplicial objects in the category of set-valued (pre)-sheaves. The word $\infty$-presheaf refers to a presheaf with values in the $\infty$-category of spaces $\S$, or another $\infty$-category. Lastly, hypersheaves are $\infty$-presheaves that satisfy descent for hypercovers. 

\section{Coarse topologies associated to cd-structures} \label{section:coarse_topology}
We recall the following definition from \cite{voevodsky}.
\begin{defn}
For $\X$ a category, a cd-structure on $\X$ is a collection $P$ of distinguished commutative squares
\begin{equation}\label{eq:dist_square}
\begin{tikzcd}
B \arrow[r] \arrow[d]& Y \arrow[d, "p"]\\
A \arrow[r, "i"]& X.
\end{tikzcd}
\end{equation}   
\end{defn}
Given a cd-structure on a category with an initial object, Voevodsky defines $\tau_P$ to be the coarsest topology such that the empty sieve covers the initial object, and such that for each distinguished square as above, the set of morphisms $\{i,p\}$ generates a covering sieve, which is denoted $\langle i,p \rangle$. The disadvantage of this definition is that when the initial object is not strict, there can be many objects with an empty covering. In the extreme case where the initial object is also the terminal object, it follows that every object is covered by the empty cover, and thus all $\tau_P$-sheaves are trivial. We remedy this by replacing $\tau_P$ by the following (strictly coarser) topology. 
\begin{defn}
For $\X$ a category with a cd-structure $P$, the c-topology (short for coarse topology) associated to $P$, denoted $\tau_P^c$, is the coarsest topology such that for any distinguished square of the form (\ref{eq:dist_square}),  the set $\{i,p \}$ generates a covering sieve.
\end{defn}

The following definitions are slight modifications of definitions that can be found in \cite[Section 2]{voevodsky}.

\begin{defn}
Given a cd-structure $P$ on category $\X$, the class of simple $P$-covers $S_P$ is the smallest class of families $\{U_i \to X \}$ in $\X$ such that
\begin{itemize}
    \item[(i)] for $f: X'\rightarrow X$ an isomorphism, $\{f: X'\rightarrow X \} $ is in $S_P$,
    \item[(ii)] for $\{p_i: U_i \rightarrow Y \}$ and $\{q_j: V_j \rightarrow A \}$ in $S_P$ and $Q$ a distinguished square of the form (\ref{eq:dist_square}), the family $\{p\circ p_i:U_i \to X, i\circ q_j: V_j \to X\}$ is in $S_P$.
\end{itemize}
\end{defn}
\begin{defn}
A cd-structure $P$ on an arbitrary category $\X$ is called c-complete if any  $\tau^c_P$-covering sieve contains a simple $P$-cover. 
\end{defn}
For $\X$ a category with a cd-structure $P$ and $X$ an object of $\X$, let let $\rho(X)$ denote the $\tau^c_{P}$-sheafification of the presheaf represented by $X$. 
\begin{defn}
A cd-structure $P$ on an arbitrary category $\X$ is called c-regular if for any distinguished square of the form (\ref{eq:dist_square})
\begin{itemize}
    \item[(a)] the square is cartesian,
    \item[(b)] $i$ is a monomorphism,
    \item[(c)] the morphism of $\tau_P^c$-sheaves 
    \begin{equation}\label{eq:epi_regularity}
    \rho(Y) \amalg \rho(B) \times_{\rho(A)} \rho(B) \to \rho(Y)\times_{\rho(X)}\rho(Y)
    \end{equation}
    is an epimorphism.
\end{itemize}
\end{defn}

Lemma 2.11 in \cite{voevodsky} gives a sufficient criterion for regularity of a cd-structure. From the proof it is clear that the same conditions imply that a cd-structure is c-regular.

\begin{lem}[{\cite[Lemma 2.14]{voevodsky}}]\label{lem:cregularity}
 Let $\X$ be a category with a cd-structure $P$, such that for any distinguished square of the form (\ref{eq:dist_square}) 
\begin{itemize}
    \item[(a)] the square is cartesian,
    \item[(b)] $i$ is a monomorphism,
    \item[(c)] the square 
    \begin{center}
        \begin{tikzcd}
            B \arrow[d] \arrow[r] & Y\arrow[d]\\
            B\times_A B \arrow[r] & Y \times_X Y
        \end{tikzcd}
    \end{center}
    with the vertical arrows diagonals, is distinguished.
\end{itemize}
Then $P$ is a c-regular cd-structure.
\end{lem}

It is equally straightforward to verify that for the other results in \cite[Section 2]{voevodsky}, the analogous statement for the c-topology also holds. We state the relevant results here for future reference.

\begin{lem}[{\cite[Lemma 2.4]{voevodsky}}]\label{lem:c_comp}
A cd-structure $P$ on a category $\mathcal{X}$ is c-complete if and only if for every distinguished square of the form (\ref{eq:dist_square}) and any morphism $f:X'\to X$, the sieve $f^*\langle i,p\rangle$ contains a simple covering.
\end{lem}

\begin{prop}[{\cite[Lemma 2.9 and Proposition 2.15]{voevodsky}}]
Let $P$ be a cd-structure on a category $\mathcal{X}$, and let $\mathcal{C}$ be a category. If $P$ is c-complete and a presheaf $F:\mathcal{X}^\op \to \mathcal{C}$ sends distinguished squares to pullback squares, then $F$ is a $ \tau_P^c$-sheaf. If $P$ is on the other hand c-regular and $G:\mathcal{X}^\textup{op} \to \mathcal{C}$ is a $\tau_P^c$-sheaf, then $G$ sends distinguished squares to pullback-squares.
\end{prop}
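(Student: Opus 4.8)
Both assertions are the c-topology analogues of \cite[Lemma 2.9 and Proposition 2.15]{voevodsky}, and the plan is to run Voevodsky's arguments while checking that the clauses about the initial object and the empty covering sieve, which $\tau_P^c$ discards, are never used. First I would reduce to the case $\C=\Set$: for every object $c$ of $\C$ the presheaf $\hom_\C(c,F(-))$ is a $\tau_P^c$-sheaf whenever $F$ is, and sends distinguished squares to pullback squares whenever $F$ does; conversely $F$ has either property as soon as all the $\hom_\C(c,F(-))$ do, because a cone in $\C$ is limiting, and a square in $\C$ is Cartesian, precisely when this survives applying every $\hom_\C(c,-)$. So we may assume $F$ and $G$ are $\Set$-valued.

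For the first statement, by Lemma \ref{ccomplem} the simple $P$-covers form a basis for $\tau_P^c$ --- every $\tau_P^c$-covering sieve contains the sieve generated by a simple cover, stably under base change --- so it suffices to verify the sheaf condition for the sieves $\langle S\rangle$ generated by simple covers $S$, and I would do this by two nested inductions on the construction of $S$. The base case is an isomorphism, for which $\langle S\rangle$ is the maximal sieve. In the pasting step, attaching simple covers of $Y$ and $A$ along a distinguished square $Q$, one first shows $F$ is $\tau_P^c$-separated: restricting two sections of $F(X)$ with equal restriction to $\langle S\rangle$ along $p$ and $i$, and applying the inductive hypothesis, reduces matters to the injectivity of $F(X)\to F(Y)\times F(A)$, which follows from the pullback identification $F(X)\cong F(Y)\times_{F(B)}F(A)$. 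One then shows the sheaf condition itself: a matching family for $\langle S\rangle$ restricts along $p$ and $i$ to matching families for the simple covers of $Y$ and $A$, hence by the inductive hypothesis to sections $s_Y\in F(Y)$ and $s_A\in F(A)$; these are compatible over $\langle i,p\rangle$ (the required identity being checked after restriction to a covering sieve and deduced using the separatedness just proved), so the pullback square amalgamates them to a unique $s_X\in F(X)$, and a final appeal to separatedness shows $s_X$ amalgamates the original family. Thus the only non-formal input is that $F$ sends each distinguished square to a pullback square.

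For the second statement, let $Q$ be a distinguished square and $G$ a $\tau_P^c$-sheaf. Since $\langle i,p\rangle$ is a $\tau_P^c$-covering sieve, $G(X)\cong\lim_{(W\to X)\in\langle i,p\rangle}G(W)$, which is the set of maps into $G$ from the subpresheaf $\langle i,p\rangle$ of the presheaf represented by $X$ --- equivalently, since $G$ is a sheaf, from the sheafification of that subpresheaf. The crux is to identify this sheafification with the pushout $\rho(Y)\amalg_{\rho(B)}\rho(A)$ in $\tau_P^c$-sheaves. Here condition (b) of c-regularity makes $\rho(A)\to\rho(X)$ a monomorphism, which together with the Cartesianness (a) and the left-exactness of sheafification identifies $\rho(Y)\times_{\rho(X)}\rho(A)$ with $\rho(B)$ and $\rho(A)\times_{\rho(X)}\rho(A)$ with $\rho(A)$, while condition (c) --- the epimorphism \eqref{epiforregularity} --- controls the remaining fibre product $\rho(Y)\times_{\rho(X)}\rho(Y)$ and forces the image of $\rho(Y)\amalg\rho(A)\to\rho(X)$, which is the sheafified sieve, to be precisely the pushout. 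Granting this, $G(X)\cong\hom_{\sh}(\rho(Y)\amalg_{\rho(B)}\rho(A),G)\cong G(Y)\times_{G(B)}G(A)$, i.e. $G$ sends $Q$ to a pullback square.

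The step I expect to be the main obstacle is this last identification of the sheafified covering sieve $\langle i,p\rangle$ with $\rho(Y)\amalg_{\rho(B)}\rho(A)$: at the level of presheaves $\langle i,p\rangle$ is only the union of the images of the presheaves represented by $Y$ and $A$ inside the one represented by $X$, so one genuinely has to pass to the category of $\tau_P^c$-sheaves and spend the epimorphism hypothesis \eqref{epiforregularity} to collapse the self-intersection of $\rho(Y)$ over $\rho(X)$. The two inductions in the first statement, and the bookkeeping needed to replace Voevodsky's $\tau_P$ by $\tau_P^c$ throughout, are by comparison routine.
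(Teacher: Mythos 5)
Your proposal is correct and follows exactly the route the paper intends: the paper gives no independent proof but simply asserts that Voevodsky's arguments for \cite[Lemma 2.9 and Proposition 2.15]{voevodsky} carry over verbatim to the c-topology, and your sketch is a faithful execution of that plan (induction over simple covers via c-completeness for the first half, identification of the sheafified covering sieve with $\rho(Y)\amalg_{\rho(B)}\rho(A)$ via the three c-regularity conditions for the second, with the correct observation that the clauses about the empty covering sieve over $\emptyset$ are never used).
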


\begin{rmk}[{\cite[Corollary 2.16]{voevodsky}}]\label{rmk:pushout_representables}
The second half of the proposition has the following consequence: if $P$ is c-regular, then for a $\tau_P^c$-sheaf $F$ and a distinguished square of the form (\ref{eq:dist_square}), from the identity $F(X) = F(A)\times_{F(B)}F(Y)$ we see that
$$\hom(y_A,F)\times_{\hom(y_B,F)}\hom(y_Y,F) = \hom(y_A \sqcup_{y_B} y_Y,F) = \hom(y_X,F). $$
Applying $\tau^c_{P}$-sheafification, which we denote by $\sharp$, shows that
$$\hom((y_A \sqcup_{y_B}y_Y)^\sharp, F) = \hom({y_A^\sharp}\sqcup_{y_B^\sharp}y_Y^\sharp,F) = \hom(y_X^\sharp,F) $$
which implies that in the category of sheaves, the natural map 
$$ \rho(A)\sqcup_{\rho(B)} \rho(Y) \to \rho(X)$$ is an isomorphism. 
\end{rmk}

Lastly we record the following analogue of \cite[Lemma 2.6, Lemma 2.7, Lemma 2.12 and Lemma 2.13]{voevodsky}.
\begin{lem}
Let $P_1$ and $P_2$ be cd-structures on a category $\X$ that are c-complete (c-regular). Then $P_1 \cup P_2$ is c-complete (c-regular). Let $P$ be a c-complete (c-regular) cd-structure on $\X$, and let $X$ be an object of $X$. Then the induced cd-structure $P/X$ on $\X/X$ is c-complete (c-regular).
\end{lem}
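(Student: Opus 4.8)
The plan is to mimic the proofs of the four cited lemmas from Voevodsky's paper \cite{voevodsky}, verifying that none of them make essential use of the initial object or of empty covers. I would organise the argument into four parts, treating c-completeness and c-regularity for unions, and then c-completeness and c-regularity for slices.

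\textbf{Unions.} For c-completeness of $P_1 \cup P_2$: by Lemma \ref{ccomplem} it suffices to show that for every distinguished square $Q$ (in $P_1$ or in $P_2$) and every morphism $f\colon X' \to X$, the pullback sieve $f^*\langle i, p\rangle$ contains a simple $(P_1\cup P_2)$-cover. But a simple $P_k$-cover is automatically a simple $(P_1 \cup P_2)$-cover since $S_{P_k} \subseteq S_{P_1 \cup P_2}$ (the defining closure conditions for $S_{P_1\cup P_2}$ include those coming from $P_k$-squares), so c-completeness of each $P_k$ gives what we want. For c-regularity of $P_1 \cup P_2$: conditions (a) and (b) are about individual distinguished squares and hold because they hold for $P_1$ and for $P_2$ separately. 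For condition (c), the subtlety is that the sheaves $\rho(-)$ are now sheaves for the finer topology $\tau^c_{P_1 \cup P_2}$; one checks, exactly as in \cite[Lemma 2.7]{voevodsky}, that the epimorphism condition is stable under refining the topology, because sheafification for a finer topology factors through sheafification for a coarser one and preserves epimorphisms.

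\textbf{Slices.} For the induced cd-structure $P/X$ on $\X/X$: a distinguished square in $P/X$ is a distinguished square in $\X$ whose objects are equipped with compatible maps to $X$; pullbacks, monomorphisms, and the sheaf-theoretic epimorphism in \eqref{epiforregularity} are all computed the same way in $\X/X$ as in $\X$, using that the forgetful functor $\X/X \to \X$ creates limits and that $\tau^c_{P/X}$-sheafification is compatible with $\tau^c_P$-sheafification via the evident adjunction between presheaf categories. Simple $(P/X)$-covers correspond bijectively to simple $P$-covers of objects over $X$, so c-completeness transfers directly via Lemma \ref{ccomplem}, and c-regularity transfers condition by condition.

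\textbf{Main obstacle.} The routine parts are the combinatorial bookkeeping around simple covers; the one place demanding genuine care is condition (c) of c-regularity, since it is a statement about sheaves rather than about the category $\X$ itself, and one must be careful that changing the cd-structure changes the topology and hence the meaning of $\rho(-)$ and of ``epimorphism of sheaves''. The key technical input is that if $\tau \subseteq \tau'$ are two topologies on $\X$, then a map of presheaves that becomes an epimorphism after $\tau'$-sheafification. One then reduces, exactly as Voevodsky does, to checking the epimorphism condition locally, which lets one pass between the topologies freely. Apart from that, the whole lemma is a direct transcription of the arguments in \cite[Section 2]{voevodsky} with ``complete/regular'' replaced by ``c-complete/c-regular'' and all mention of the initial object and empty covers deleted.
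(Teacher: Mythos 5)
Your proposal is correct and matches the paper's approach exactly: the paper itself gives no proof, merely recording these statements as direct analogues of Voevodsky's Lemmas 2.6, 2.7, 2.12 and 2.13 whose proofs translate verbatim once references to the initial object and empty covers are removed, which is precisely the transcription you carry out. The only blemish is the truncated sentence in your final paragraph, which should read that a map of presheaves becoming an epimorphism after $\tau$-sheafification also becomes one after $\tau'$-sheafification for any finer $\tau'$, since further sheafification is a left exact left adjoint; the complete version of this fact already appears correctly in your paragraph on unions.
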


\section{Cd-structures and dimension functions}\label{section:dimension}
In this section we consider a property of cd-structures that is similar to Voevodsky's notion of a cd-structure bounded by a density structure. Like Voevodsky's bounded cd-structures, for a cd-structure with this property, hypersheaves for the associated (c\nobreakdash-)topology are exactly the $\infty$-presheaves that send distinguished squares to homotopy pullback squares, provided that the cd-structure is also (c\nobreakdash-)complete and (c\nobreakdash-)bounded. To our knowledge there is no direct relation between the two concepts, in the sense that given a category with a cd-structure, a reducing density structure does not automatically give rise to a compatible dimension function or vice versa.

With ``$\infty$-presheaves'' on a category $\mathcal{X}$, we mean functors $F:\mathcal{X}^\op \to \S$ where $\S$ is the $\infty$-category of spaces, presented by the classical model structure on $\textup{sSet}$ where the weak equivalences are weak homotopy equivalences and the fibrations are Kan fibrations. The category $\textup{Psh}(\X;\S)$ can then be presented by the global projective model structure on the category of simplicial presheaves  $\textup{Psh}(\X,\textup{sSet})$, with as equivalences the objectwise weak homotopy equivalences, and as fibrations the objectwise Kan fibrations. We denote this model category by $\textup{Psh}(\X,\textup{sSet})_\textup{proj}$.

\begin{defn}
A category with a dimension function is a category $\X$ with an initial object $\emptyset$, and a function 
$$\dim:\textup{Obj}(\X)\to \mathbb{Z}_{\geq-1}$$
such that for $X$ in $\textup{Obj}(\X)$, $\dim(X) = -1$ if and only if $X$ is isomorphic to $\emptyset$. 
\end{defn}
\begin{defn}
For $\X$ a category with a dimension function, a cd-structure $P$ is compatible with the dimension function if there is cd-structure $P' \subseteq P$ such that for every distinguished square of the form (\ref{eq:dist_square}) in $P'$ we have $\dim(A)\leq \dim(X)$, $\dim(Y)\leq \dim(X)$ and $\dim(B)<\dim(X)$, and and such that for every square of the form (\ref{eq:dist_square}) in $P$, the sieve $\langle i,p \rangle$ contains a simple $P'$-cover.
\end{defn}

\begin{defn}
Let $\X$ be a category with a cd-structure $P$. A simplicial presheaf $F$ on $\X$ is called c-excisive with respect to $P$ if for every distinguished square  the square of simplicial sets 
\begin{center}
    \begin{tikzcd}
    F(X) \arrow[r]\arrow[d]&F(Y)\arrow[d]\\
    F(A) \arrow[r] & F(B)
    \end{tikzcd}
\end{center}
is a homotopy pullback square. If $\X$ has an initial object, then the simplicial presheaf $F$ is called excisive with respect to $P$ if it is c-excisive and $F(\emptyset)$ is contractible (this is called a flasque presheaf in \cite{voevodsky} ). 
For $\C$ an $\infty$-category, we call an $\infty$-presheaf $F$ on $\X$ excisive if it sends distinguished squares to pullbacks in $\C$. If $\X$ has an initial object $\emptyset$ and $\C$ a terminal object $*$, then we call $F$ excisive if it is c-excisive and $F(\emptyset) \simeq *$.
\end{defn}
We define two classes of morphisms of simplicial presheaves, following the terminology in \cite{voevodsky}.
\begin{defn}
For $\X$ a category with a Grothendieck topology $\tau$, a morphism of simplicial presheaves $f:F \to G$ on $\X$ is called a $\tau$-local equivalence if \begin{itemize}
    \item[(a)] the morphism of associated sheaves of sets $(\pi_0F)^\sharp \to (\pi_0G)^\sharp $ induced by $f$ is an isomorphism,
    \item[(b)] for any object $X$ of $\X$, 0-simplex $x\in F(X)$ and $n\geq 1$, the induced morphism of associated sheaves of sets 
    $$(\pi_n(F,x))^\sharp \to (\pi_n(G,f(x)))^\sharp $$
    on $\X/X$ is an isomorphism.
 \end{itemize}
\end{defn}
\begin{defn}
For $\X$ a category and $Q$ a commutative square of the form (\ref{eq:dist_square}), the simplicial presheaf $K_Q$ on $\X$ is the homotopy pushout of simplicial presheaves
\begin{center}
    \begin{tikzcd}
       y_B \arrow[r] \arrow[d] & y_A \arrow[d]\\
       y_Y \arrow[r]& K_Q.
    \end{tikzcd}
\end{center}
If $\X$ is endowed with a cd-structure $P$, let $G^0_P$ denote the collection of the canonical morphisms $K_Q \to y_X$ in $\psh(\X;\sset)$ for $Q$ in $P$.
\end{defn}
For $\tau$ a Grothendieck topology on $\X$, we denote the left Bousfield localisation of $\psh(\X;\sset)_{\textup{proj}}$ at the $\tau$-local equivalences by $\psh(\X;\sset)_{\textup{loc}}$. The underlying $\infty$-categories of these simplicial categories are $\psh(\X;\S)$ and $\hsh(\X,\S)$ respectively, and we denote the induced functor of $\infty$-categories by
$$L:\psh(\X;\S) \to \hsh(\X,\S).$$

The following proposition, which is an analogue of \cite[Proposition 3.8]{voevodsky}, shows that if $\X$ has the c-topology associated to a c-complete and c-regular cd-strucure compatible with a dimension function on $\X$, then the model category $\psh(\X;\sset)_{\textup{loc}}$ can also be obtained as the left Bousfield localisation of $\psh(\X;\sset)_{\textup{proj}}$ at the class of morphisms $G_P^0$. Moreover, in this case the hypersheaves are exactly the c-excisive $\infty$-presheaves.
\begin{prop}\label{prop:voevodsky3.8}
Let $\X$ be a category.
\begin{enumerate}
    \item[1] For $P$ a cd-structure, a simplicial presheaf on $\X$ is $G_P^0$-local if and only if it is objectwise fibrant and c-excisive.
    \item[2] For $P$ a c-regular cd-structure, a $G_P^0$-local equivalence is a $\tau_P^c$-local equivalence.
    \item[3] If $\X$ has a dimension function and $P$ is a c-regular, c-complete and compatible with the dimension function, then a $\tau_P^c$-local equivalence is a $G_P^0$-local equivalence.
\end{enumerate}
\end{prop}
We will proof the proposition at the end of this section.

\begin{lem}\label{lem:refinement}
Let $P'\subseteq P$ be cd-structures, such that for every square in $P$ of the form (\ref{eq:dist_square}), the sieve $\langle i,p \rangle$ contains a simple $P'$-cover. Then $P$ and $P'$ have the same associated (c\nobreakdash-)topology, and $P$ is (c\nobreakdash-)complete if and only if $P'$ is.
\end{lem}
\begin{proof}
We prove the statement about c-completeness for the associated c-topology, the proof for the statement about completeness for the associated topology is mostly the same.

It is clear that $P$ and $P'$ have the same associated c-topology. If $P'$ is c-complete, then any $\tau_P$-cover, which is in particular a $\tau_{P'}$-cover, contains a simple $P'$-cover. This is also a simple $P$-cover, so $P$ is c-complete.

Now suppose $P$ is c-complete. Let $S\subseteq S_P$ be the class of simple $P$-covers $\mathcal{U} = \{U_i \to X \}$ such that for every $f:Y \to X$, the pullback $f^*\langle \mathcal{U} \rangle$ of the generated sieve $\langle \mathcal{U}\rangle$ contains a simple $P'$-cover. We show that $S= S_P$. It is clear that $S$ contains $\{f:X'\rightarrow X \}$ for $f$ an isomorphism, since the pullback of the maximal sieve is the maximal sieve. Suppose $\mathcal{U}=\{p_i:U_i \rightarrow Y \}$ and $\mathcal{V}=\{q_j: V_j \rightarrow A \}$ are in $S$, and $Q$ is a distinguished $P$-square of the form (\ref{eq:dist_square}). Then by assumption $\langle i,p\rangle$ contains a simple $P'$-cover $\mathcal{W}=\{r_k:W_k \rightarrow X \}$. For every $k$, we have that $r_k:W_k \rightarrow X$ factors through $i$ or $p$. Let us assume that $r_k$ factors as 
$$W_k \xrightarrow{g}Y \xrightarrow{p}X.$$
Then $g^*\langle\mathcal{U}\rangle$ contains a simple $P'$-covering, and composing this covering with $r_k$ gives, together with the rest of $\mathcal{W}$, a simple $P'$-covering of $X$. Doing this for all $k$ gives a simple $P'$-cover which is in the sieve generated by
$$\{U_i \xrightarrow{p_i}Y \xrightarrow{p}X, V_j \xrightarrow{q_j}A \xrightarrow{i}X\},$$
which shows that $S = S_P$. Now let $\mathcal{U}$ be an an arbitrary $\tau_{P'}$-cover. This is also a $\tau_P$-cover, which contains a family in $S= S_P$ since $P$ is c-complete, and therefore $\mathcal{U}$ contains a simple $P'$-cover.     
\end{proof}
The following terminology is due to Voevodsky.
\begin{defn}
Let $\X$ be a category with a cd-structure $P$. A B.G.-functor on $\X$ with respect to $P$ is a family of presheaves of pointed sets $T_q:\X \to \set_*$ for $q\geq 0$ together with, for a any distinguished square $Q$ of the form (\ref{eq:dist_square}), maps of pointed sets $$\delta_Q: T_{q+1}(B) \to T_q(X)$$ such that 
\begin{itemize}
 \item[(a)] the morphisms $\delta_Q$ are natural with respect to morphisms of distinguished squares,
 \item[(b)] for any $g\geq 0$ and distinguished square of the form (\ref{eq:dist_square}), the sequence of pointed sets 
 $$T_{q+1}(B) \xrightarrow{\delta_Q} T_q(X) \to T_q(A)\times T_q(Y) $$
 is exact.
\end{itemize}
\end{defn}
We prove the following version of \cite[Theorem 3.2]{voevodsky} for cd-structures that are compatible with a dimension function, and also the analogous statement for the associated c-topology.
\begin{thm}[{\cite[Theorem 3.2]{voevodsky}}]\label{thm:voevodsky3.2}
Let $\X$ be a category with a dimension function and $P$ complete (c-complete) cd-structure that is compatible with the dimension function. For any B.G.-functor $(T_q,\delta_Q)$ on $\X$ such that the $\tau_P$-sheaves ($\tau_P^c$-sheaves) associated to $T_q$ are trivial and $T_q(\emptyset) = *$ for all $q \geq 0$, we have $T_q = *$ for all $q\geq 0$. 
\end{thm}
\begin{proof}
By Lemma \ref{lem:refinement}, we can assume that for a distinguished square of the form (\ref{eq:dist_square}) in $P$, we have $\dim(A)\leq \dim(X)$, $\dim(Y)\leq \dim(X)$ and $\dim(B)<\dim(X)$.

Let $T_q$ be a B.G.-functor such that the $\tau_P$-sheaves ($\tau_P^c$-sheaves) $T_q^\sharp$ associated to $T_q$ are trivial. Moreover we assume that $T_q(\emptyset) = *$, in other words, $T_q(X) = *$ for $\dim(X) =-1$. We show by induction that $T_q(X) = *$ for all $X$. 

Assume $T_q(X) = *$ for all $q \geq 0$ and $X$ with $\dim(X) \leq d$. Let $S$ be the family of simple coverings $\mathcal{U} = \{U_i \to X\}$ of objects $X$ in $\X$ such that whenever $\dim(X) \leq d+1$ and $a \in T_q(X)$ is trivial on all $U_i$, we have $a = *$. Clearly $S$ contains all isomorphisms. Now suppose $\{p_i:Y_i \to Y\}$ and $\{q_j:A_j \to A \}$ are in $S$, and we have a distinguished square of the form(\ref{eq:dist_square}). We show that the simple cover $\{p \circ p_i, i \circ q_j \}$ is in $S$. If $\dim(X)>d+1$ this is automatic, so we assume $\dim(X) \leq d+1$. Let $a\in T_q(Y)$ such that $a$ is trivial on all $A_i$ and all $Y_i$. Since $\dim(A) \leq d+1$  and $\dim(Y) \leq d+1$, it follows that $a$ is trivial on $A$ and on $Y$. Since $T_q$ is a B.Q.-functor, there is $b \in T_{q+1}(B)$ such that $a = \delta_Q(b)$. But $\dim(B)\leq d$, so by the induction hypothesis $b=*$ and it follows that $a=*$. So we have shown that $S$ is equal the class of simple covers $S_P$.

Now let $X$ with $\dim(X) = d+1$ and $a\in T_q(X)$. Since $T_q^\sharp = *$, there is a cover $\mathcal{U}=\{U \to Y\}$ such that $a$ is trivial on all $U$. Since $P$ is complete (c-complete), $\mathcal{U}$ contains a simple covering, and since $S_P = S$, it follows that $a = *$. This completes the induction. 
\end{proof}

All other results in \cite[Section 3]{voevodsky} that have to do with bounded cd-structures, are derived from this result. Therefore all these statements are also true for cd-structures that are compatible with a dimension function on $\X$. Moreover, these statements hold for the associated c-topology as well. In many cases, this can be seen by just translating both the statement and the proof, where bounded cd-structure is to be replaced by cd-structure compatible with a dimension function, and if necessary, complete replaced by c-complete, flasque replaced by c-excisive, $t_P$ replaced by $\tau_P^c$ and $G_P$ replaced by $G_P^0$. We record the precise formulations of a few of these statements below.

\begin{lem}[{\cite[Lemma 3.4]{voevodsky}}]\label{lem:voevodsky3.4} Let $\X$ be a category with a cd-structure $P$, and let $F$ be a simplicial presheaf on $\X$ such that $F(U)$ is a Kan complex for all $U$ in $\X$. Then $F$ is c-excisive with respect to $P$ if and only if for a distinguished square as in (\ref{eq:dist_square}), the map of simplicial sets 
\begin{equation}\label{eq:lem3.4}
    \map(y_X,F) \to \map(K_Q,F) 
\end{equation}
induced by $P$ is a weak equivalence.
\end{lem}
\begin{proof}
This follows directly from the proof of \cite[Lemma 3.4]{voevodsky}.
\end{proof}

A case where proving the translated statement actually takes an extra step, is the following lemma. The proof here is almost, but not entirely a translation of the proof of {\cite[Lemma 3.5]{voevodsky}}. 

\begin{lem}[{\cite[Lemma 3.5]{voevodsky}}]\label{lem:voevodksy3.5}
Let $\X$ be a category with a dimension function and $P$ a c-complete cd-structure that is compatible with the dimension function. A morphism of c-excisive presheaves $F \to G$ is a $\tau_P^c$-local equivalence if and only if it is objectwise an equivalence of simplicial sets.
\end{lem}
\begin{proof}
The if-direction is clear. On the other hand, let $f: F\to G$ be a $\tau_P^c$-local equivalence between c-excisive presheaves. Using the global projective model structure on the category of simplicial presheaves on $\X$, we can find a commuting diagram of simplicial presheaves
\begin{center}
    \begin{tikzcd}
            F \arrow[r, "f"] \arrow[d] & G \arrow[d]\\
            F'\arrow[r, "{f'}"]& G
    \end{tikzcd}
\end{center}
such that the vertical maps are objectwise equivalences of simplicial sets, and $f'$ is objectwise a Kan fibration of Kan complexes. Now it suffices to show that $f'$ is objectwise an equivalence of simplicial sets, and we do this by showing that for every object $X$ and 0-simplex $y \in G'(X)$, the fibre $\fib_y(F'(X) \to G'(X))$ is contractible. For such $X$ and $y$, let $H$ be the simplicial presheaf on $\X/X$ given by
$$(U \xrightarrow{u} X) \mapsto \fib_{G'(u)(y)}(F'(U) \to G'(U)).$$
Then $H$ is objectwise a Kan complex, and c-excisive with respect to the cd-structure $P/X$ on $\X/X$ induced by $P$, since the fibre is in this case the same as the homotopy fibre, and therefore commutes with homotopy pullbacks. This cd-structure is complete and compatible with the induced dimension function on $\X/X$. We need to show that $H$ is objectwise contractible. First assume $H(\id_X) \neq \emptyset$ and let $a$ be a 0-simplex in $H(\id_X)$, i.e., a 0-simplex $a$ of $F(X)$ that is mapped to $y$ by $f$. Consider the family of $\set_*$-valued presheaves $\{T_q\}_{q\geq 0}$ on $\X/X$ with $T_q$ given by
$$ (U \xrightarrow{u} X) \mapsto \pi_q(H(U \xrightarrow{u}X), a_{U \to X}). $$
Together with the boundary maps in the long exact sequence associated to a pullback of simplicial sets, these form a B.G.-functor. Since homotopy groups commute with fibres, $T_q(U\to X)$ is the fibre of 
$$\pi_q(F'(U), a|_U) \to \pi_q(G'(U),y|_U) $$
so as a $\set_*$-valued presheaf, $T_q$ is the fibre of the morphism of $\set_*$-valued presheaves 
$$ \pi_q(F(-)' ,a|_{(-)}) \to \pi_q(G(-)',y|_{(-)}).$$
Since $f$ is a $\tau_P^c$-local equivalence, this morphism of presheaves induced by $f$ becomes an isomorphism after $\tau^c_P$-sheafification. Sheafification commutes with finite limits (see e.g. \cite[Page 227]{maclanemoerdijk}), so $T_q^\sharp$ is the fibre of this isomorphism, therefore trivial. To apply Theorem \ref{thm:voevodsky3.2}, unlike Voevodsky we still need to show that $H(\emptyset \to X)$ is contractible, since this is not part of our definition of a c-excisive presheaf. However, since $\emptyset$ is the only object of dimension -1, and $P$ is compatible with the dimension function, the only simple $P$-covering of $\emptyset$ is the identity $\emptyset \to \emptyset$. Since $P$ is c-complete, it follows that the only cover of $\emptyset$ is the trivial one. Therefore for any presheaf $F$ we have $F(\emptyset) = F^\sharp(\emptyset)$. In particular $T_q(\emptyset) = T_q^\sharp(\emptyset) = *$. Now we can conclude that $T_q=*$ for all $q$, showing that $H$ is indeed objectwise contractible.

It remains to show that $H(\id_X)$ is non-empty. Since the sheaf associated to $T_0$ is $*$, and $P/X$ is c-complete, there is a simple cover $\{u_i \to \id_X \}$ of $\id_X$ such that $H(u_i)$ is non-empty for all $i$. We consider the class $S'$ of simple coverings $\{u_i \to u \}$ such that whenever $H(u_i) \neq \emptyset$ for all $i$, we have $H(u) \neq \emptyset$, and we show that $S'$ coincides with the class of all simple coverings, by the same argument with induction over simple covers as is done in the proof of \cite[Lemma 3.5]{voevodsky}. 
\end{proof}

\begin{lem}[{\cite[Lemma 3.7]{voevodsky}}]\label{lem:voevodksy3.7}
Let $P$ be a c-regular cd-structure on a category $\X$. Then all elements of $G_P^0$ are $\tau_P^c$-local equivalences.
\end{lem}
\begin{proof}
For $Q$ a distinguished square (\ref{eq:dist_square}), the square of associated $\tau_P^c$-sheaves is a pushout by Remark \ref{rmk:pushout_representables}. By \cite[Lemma 3.6]{voevodsky} it follows that $p_Q:K_Q \to y_X$ is a $\tau_P^c$-local equivalence.
\end{proof}

We can now give the proof of the main result in this section.
\begin{proof}[Proof of Proposition \ref{prop:voevodsky3.8}]
\begin{enumerate}
    \item[1] Let $F$ be a simplicial presheaf on $\X$. If $F$ is $G_P^0$-local then by definition it is objectwise fibrant, and for every $K_Q \to y_X$ the morphism of simplicial sets (\ref{eq:lem3.4}) is an equivalence, so by Lemma \ref{lem:voevodsky3.4} $F$ is c-excisive. If $F$ is on the other hand objectwise fibrant and c-excisive, then by Lemma \ref{lem:voevodsky3.4} it follows that $F$ is $G_P^0$-local.
    \item[2] For $f:F \to G$ a $G_P^0$-local equivalence, by standard factorisation techniques we can form a diagram
    \begin{equation}\label{diagramin3.8}
        \begin{tikzcd}
            F \arrow[r, "f"] \arrow[d, "e_F"] & G \arrow[d, "e_G"]\\
            \textup{Ex}(f) \arrow[r, "\textup{Ex}(f)"] & \textup{Ex}(G)
        \end{tikzcd}
    \end{equation}
    where $\textup{Ex}(F)$ and $\textup{Ex}(G)$ are $G_P^0$-local, and where $e_F$ and $e_G$ are both $G_P^0$-local equivalences  and, by Lemma \ref{lem:voevodksy3.7} if $P$ is c-regular, $\tau_P^c$-local equivalences (see also \cite[Proof of Proposition 3.8(2)]{voevodsky}). It follows that $\textup{Ex}(f)$ is a $G_P^0$-local equivalence between $G_P^0$-local objects, hence an objectwise weak equivalence, hence a $\tau_P^c$-local equivalence. Therefore $f$ is a $\tau_P^c$-local equivalence. 
    \item[3] Now we assume that $\X$ has a dimension function, and in particular an initial object. For $f:F \to G$ a $\tau_P^c$-local equivalence we can again form a diagram as in (\ref{diagramin3.8}), such that $\textup{Ex}(F)$ and $\textup{Ex}(G)$ are $G_P^0$-local, and such that $e_F$ and $e_G$ are both $G_P^0$-local equivalences and, by \ref{lem:voevodksy3.7} if $P$ is c-regular, $\tau_P^c$-local equivalences. It follows that $\textup{Ex}(f)$ is a $\tau_P^c$-local equivalence between objectwise fibrant and c-excisive presheaves, so by Lemma \ref{lem:voevodksy3.5} it is an objectwise equivalence. Therefore it is a $G_P^0$-local equivalence, and it follows that $f$ is a $G_P^0$-local equivalence.
\end{enumerate}
\end{proof}
\begin{cor}\label{cor:hypersheaves_excisive}
For $\X$ a category with a dimension function, and $P$ a c-regular and c-complete cd-structure compatible with the dimension function, the category of $\tau_P^c$-hypersheaves $\hsh(\X,\S)$ is equivalent to the full subcategory of $\psh(\X;\S)$ on the c-excisive $\infty$-presheaves.
\end{cor}

\begin{rmk}\label{rmk:pushout_representable_hypersheaves}
It follows from Corollary \ref{cor:hypersheaves_excisive} that the analogue of Remark \ref{rmk:pushout_representables} holds for hypersheaves if $\X$ is a category with a dimension function, and $P$ a c-regular and c-complete cd-structure compatible with the dimension function. In that case, for $Q$ a distinguished square of the form (\ref{eq:dist_square}), we have that  
\begin{center}
   \begin{tikzcd}
      Ly_B \arrow[r ]\arrow[d]& Ly_Y \arrow[d] \\
      Ly_A \arrow[r] & Ly_X
   \end{tikzcd}
\end{center}
is a pushout in $\hsh(\X;\S)$.
\end{rmk}

\section{Hypersheaves with values in a complete category}\label{section:hypersheaves}

In this section we study the category of $\C$-valued hypersheaves on a category with a Grothendieck topology $\tau$, for $\C$ an arbitrary $\infty$-category. In the case that $\C$ is complete, we give a different characterisation of this category in Proposition \ref{prop:dagv1.1.12}, and another one in Proposition \ref{prop:C_valued_hypersheaves_excisive} if $\tau$ is the (c\nobreakdash-)topology associated to a (c\nobreakdash-)complete and (c\nobreakdash-)regular cd-structure compatible with a dimension function.

\begin{defn}
Let $\X$ be a site with Grothendieck topology $\tau$, $X$ an object of $\X$ and $U$ a simplicial presheaf on $\X$ with a morphism $U \to y_X$ (where $y_X$ is regarded as a discrete simplicial presheaf). Then $U \to y_X$ is called a hypercover if $U$ is degreewise a coproduct of representables $U_n = \amalg_a y_{U_n^a}$, and $U \to y_X$ is a $\tau$-local trivial fibration.
\end{defn}
For such a hypercover, $U$ is the homotopy colimit of the diagram of discrete simplicial presheaves
$$ \dots \coprod_a y_{U^a_1} \rightrightarrows \coprod_a y_{U^a_0}$$
by \cite[Theorem 1.2]{DHI}. It follows that $U$ is Kan complex-valued and $U \to y_X$ is in $\psh(\X;\S)$.
\begin{defn}
Let $\X$ be a site and $\C$ an $\infty$-category. Let $\hsh(\X;\C)$ denote the subcategory of $\psh(\X;\C)$ consisting of those $F:\X^\op \to \C$ such that for a hypercover $U \to y_X$, where every $U_n$ is a coproduct of representables $\amalg_a y_{U^a_n}$, we have that
$$F(X) \to \prod_a F(U^a_0) \rightrightarrows \prod_a F(U_1^a) \dots $$
is a limit diagram in $\C$. 
\end{defn}

\begin{rmk}\label{rmk:model_structure_hypersheaves}
If $\C$ is the category of Kan complexes $\S$, then by \cite[Theorem 1.3]{DHI} this is equal to the category $\hsh(\X;\S)$ we defined in Section \ref{section:dimension} as underlying $\infty$-category of the localisation of global projective model structure on $\psh(\X;\sset)$ at $\tau$-local equivalences. However, $\hsh(\X;\S)$ can also be presented by Jardine's local injective model structure on the category $\sh(\X;\sset)$ of simplicial sheaves (\cite[Theorem 5]{jardine}). The cofibrations in this model structure are objectwise injections of simplicial sets, and the weak equivalences are $\tau$-local equivalences between simplicial sheaves. Note that this model category only depends on the category of set-valued sheaves $\shv(\X;\set)$. Therefore if $i:\X \to \Y$ induces an equivalence $i^*:\shv(\Y;\set) \to \sh(\X;\set)$, it follows that $i$ also induces an equivalence 
$$ \hsh(\Y;\S) \xrightarrow{\sim} \hsh(\X;\S).$$
\end{rmk}

We recall the following result.
\begin{thm}[{\cite[Theorem 5.1.5.6]{htt}}]\label{thm:htt5.1.5.6}
For $\X$ a site and $\C$ a complete category, precomposition with the Yoneda embedding induces an equivalence of $(\infty,1)$-categories
$$y^*:\Fun^c(\psh(\X;\S)^\op,\C) \xrightarrow{\sim} \Fun(\X^\op, \C)$$
where $\Fun^c$ denotes the subcategory of functors that preserve small limits.
\end{thm}
We can derive from it the following proposition, which is the analogue of \cite[Proposition 1.1.12]{dagv} for hypersheaves.
\begin{prop}\label{prop:dagv1.1.12}
For $\X$ a site and $\C$ a complete $\infty$-category, let $y:\X \to \psh(\X;\S)$ denote the Yoneda embedding, and let $L:\psh(\X;\S) \to \hsh(\X;\S)$ denote the left adjoint to the inclusion, i.e, the hypersheafification. Then precomposition with $L \circ y$ induces an equivalence of $\infty$-categories
$$\Fun^c(\hsh(\X;\S)^\op,\C) \xrightarrow{\sim} \hsh(\X;\C).$$
\end{prop}
\begin{proof}
By \cite[Proposition 5.5.4.20]{htt}, $L$ induces a fully faithful embedding $$L^*:\Fun^c(\hsh(\X;\S)^\op,\C) \to \Fun^c(\psh(\X;\S)^\op,\C).$$ The essential image consists of limit preserving functors $f:\psh(\X;\S)^\op \to \C$ such that for $U \to y_X$ a hypercover, the induced $f(y_X) \to f(U)$ is an equivalence. For such $f$, let $F:\X^\op \to \C$ be the $\infty$-presheaf that is obtained by precomposition with $y$, and let $U\to y_X$ be a hypercover, where $U$ is given by a coproduct of representables $\coprod_a y_{U^a_n}$ in simplicial degree $n$. Recall that $U$ is the homotopy colimit of the diagram
$$ \dots \coprod_a y_{U^a_1} \rightrightarrows \coprod_a y_{U^a_0}.$$
Applying $f$ to this diagram gives
$$\prod_a f( y_{U^a_0}) \rightrightarrows \prod_ a f(y_{U^a_1}) \dots$$
of which the homotopy limit is $f(U)$, since $f$ turns colimits of $\infty$-presheaves into limits in $\C$. By assumption, the map $f(y_X) \to f(U)$ is an equivalence. It follows that
$$F(X) \to \prod_a F(U^a_0) \rightrightarrows \prod_a F(U_1^a) \dots $$
is a homotopy limit diagram in $\C$, so $F$ is a hypersheaf.

On the other hand, if $f:\psh(\X;\S)^\op \to \C$ is a limit preserving functor such that the $\infty$-presheaf $F:\X^\op \to \C$ obtained by precomposition with $L\circ y$ is a hypersheaf, then it follows that  
$$F(X) \to \prod_a F(U^a_0) \rightrightarrows \prod_a F(U_1^a) \dots$$
is a limit. Since $f$ preserves limits,  
$$f(U) \to \prod_a f(y_{U^a_0}) \rightrightarrows \prod_ a f(y_{U^a_1}) \dots$$
is a limit diagram as well, showing that $f(y_X) \to f(U)$ must be an equivalence. It follows that the essential image of $\Fun^c(\hsh(\X;\S)^\op,\C)$  under the equivalence of Theorem \ref{thm:htt5.1.5.6} is $\hsh(\X;\S)$.
\end{proof}

\begin{cor}\label{cor:category_hypersheaves}
If $i:\X \to \mathcal{Y}$ is a morphism of sites such that the induced morphism 
$$i^*:\hsh(\mathcal{Y};\S) \to \hsh(\X;\S) $$
is an equivalence of $\infty$-categories, then for a complete $\infty$-category $\C$, $i$ induces an equivalence of $\infty$-categories
$$i^*:\hsh(\mathcal{Y};\C) \to \hsh(\X;\C). $$
In particular, by Remark \ref{rmk:model_structure_hypersheaves}, it suffices if $i$ induces an equivalence of categories $i^*:\shv(\Y;\set) \to \shv(\X;\set)$.
\end{cor}

We end this section with the analogue of Corollary \ref{cor:hypersheaves_excisive} for hypersheaves with values in an arbitrary category.
\begin{prop}\label{prop:C_valued_hypersheaves_excisive}
Let $\X$ be a category with a dimension function, and $P$ a (c\nobreakdash-)complete and (c\nobreakdash-)regular cd-structure that is compatible with the dimension function. Let $\C$ be a complete $\infty$-category. Then the $\infty$-category $\hsh(\X;\C)$ of $\C$-valued $\tau_P$-hypersheaves ($\tau_P^c$-hypersheaves) is equivalent to the subcategory $\psh_\exc(\X;\C) \subseteq \psh(\X;\C)$ consisting of the (c\nobreakdash-)excisive  $\C$-valued presheaves, i.e., presheaves $F:\X^\op \to \C$ that send distinguished squares to pullback squares in $\C$ (and such that $F(\emptyset)$ is the terminal object).
\end{prop}
\begin{proof}
We show this for $\tau_P^c$, the proof for $\tau_P$ is similar.

We recall from the proof of Proposition \ref{prop:dagv1.1.12} that 
$L:\psh(\X;\S) \to \hsh(\X;\S)$ induces an embedding
$$L^*:\Fun^c(\hsh(\X;\S)^\op,\C) \to \Fun^c(\psh(\X;\S)^\op,\C)$$ of which the image is the subcategory of those small limit-preserving functors $f:\psh (\X,\S)^\op \to \C$ that send hypercovers $U \to y_X$ to an equivalence, or equivalently, that send all $\tau$-local equivalences in $\psh(\X;\S)$ to an equivalence. By Proposition \ref{prop:voevodsky3.8}, this is equivalent to sending all morphisms $K_Q \to y_X$ to an equivalence, for $Q$ a distinguished square. We have seen that $\hsh(\X;\C)$ is the image of this subcategory under the equivalence 
$$y^*:\Fun^c(\psh(\X;\S)^\op,\C) \xrightarrow{\sim} \Fun(\X^\op, \C).$$
We show that this image is equal to $\psh_\exc(\X;\C)$.

For $f \in \Fun^c(\psh(\X;\S)^\op,\C)$ in the image of $L^*$, i.e., $f=g\circ L$ for $g$ in $ \Fun^c(\hsh(\X;\S)^\op,\C)$, let $F$ be the $\infty$-presheaf obtained by precomposing $f$ with the Yoneda embedding. Then for $Q$ a distinguished square of the form (\ref{eq:dist_square}), applying $F$ is applying $g$ to \begin{center}
\begin{tikzcd}
   Ly_B \arrow[r]\arrow[d]& Ly_Y \arrow[d] \\
   Ly_A \arrow[r] & Ly_X
\end{tikzcd}
\end{center}
which is a pushout in $\hsh(\X;\S)$ by Remark \ref{rmk:pushout_representable_hypersheaves}. Since $g$ preserves small limits, this is sent to a pullback square, so $F$ is c-excisive.

On the other hand, let $f \in \Fun^c(\psh(\X;\S)^\op,\C)$ be such that precomposing with $y$ gives a c-excisive $\infty$-presheaf. Then for $Q$ a distinguished square of the form (\ref{eq:dist_square}) we have $f(y_X) = f(y_A) \times_{f(y_B)} f(y_Y) $ and since $f$ preserves small limits, the latter is equal to $ f(K_Q)$.
It follows that $f$ sends the morphism $K_Q \to y_X$ to an equivalence for every distinguished square, so $f$ is in the image of $L^*$.
\end{proof}

\section{Making a category pointed}\label{section:pointed_category}
We will use a general construction to turn the category of compact varieties $\comp$ into a pointed category where the empty variety $\emptyset$ is the zero object. In this section we compare the categories of $\infty$-presheaves on this construction and on the original category. Recall that we call an initial object $\emptyset$ in a category $\C$ \textit{strict} if any morphism $f:X \to \emptyset$ in $\C$ is an isomorphism.

\begin{defn}\label{defn:zero_object}
Let $\X$ a category with strict initial object $\emptyset$, for every $X$ in $\X$ we denote the unique morphism $\emptyset\to X$ by $0$. We define $\X_0$ be the category with the same objects as $\X$,
$\hom_{\X_0}(\emptyset,X)= \hom_\X(\emptyset,X)=\{0\}$, and
$$\hom_{\X_0}(X,Y) = \hom_\X(X,Y) \sqcup \{0 \} $$
if $X$ not isomorphic to $\emptyset$. The composition is determined by the composition in $\X$ and the rule that $f \circ 0 = 0 \circ f = 0$ for all $f$.
\end{defn}
Note that $\X_0$ is a pointed category with zero object $\emptyset$. Let $i:\X \to \X_{0}$ denote the natural inclusion of categories.
 
For $\X$ a category with an initial object $\emptyset$ and $\C$ a pointed ($\infty$-)category, we denote by $\psh(\X;\C)_\emptyset $ the subcategory of $\psh(\X;\C)$ on $\infty$-presheaves $F:\X^\op \to \C$ such that $F(\emptyset)$ is a zero object. 
\begin{prop}\label{prop:sheaves_zero_object}
For $\mathcal{X}$ a category with a strict initial object $\emptyset$ and $\mathcal{C}$ a cocomplete pointed $\infty$-category, there is an equivalence of $\infty$-categories $$i^*:\psh(\X_0;\C)_\emptyset \xrightarrow{\sim} \psh(\X;\C)_\emptyset $$
induced by the inclusion of categories $i:\X \to \X_0$.
\end{prop}
\begin{proof}
Since $\C$ is cocomplete, for $F:\X^\op \to \C$ the left Kan extension $\textup{Lan}_i F$ along $i$ exists, and there is an adjunction
\begin{center}
   \begin{tikzcd}
            \psh (\X;\C) \arrow[r, shift left = .75ex, "{\textup{Lan}_i F}"] & \psh(\X_0;\C). \arrow[l, shift left=.75ex, "i^*"] 
                 
        \end{tikzcd}
\end{center}

For $F:\X^\op\to \C$ and $X\in \X_0$, recall that $\textup{Lan}_i F(X)$ can be  computed as the colimit of $\X^\op\times_{\X_0^\op} (\X_{0}^\op)_{X/} \rightarrow \C$; in other words, as $$\underset{X\to Y}{\textup{colim}}\ F(Y)$$ over the diagram of morphisms $X\to Y$ in $\X_0$, where a morphism from $X \to Y$ to $X\to Z$ in the indexing category is a morphism $Z \to Y$ that makes the obvious triangle commute. This indexing category splits as coproduct of two categories. One is the full subcategory on morphisms $X \to Y$ in $\X$, which has a terminal object $\id:X \to X$, and the other is full subcategory category on zero-morphisms $0:X\to Y$, which has as terminal object $X\to \emptyset$. Therefore we have
$$\textup{Lan}_i F(X) = F(X) \amalg F(\emptyset)$$

Now if $F(\emptyset)=0$, then this shows $\textup{Lan}_i F(X)=F(X)$. Therefore we can restrict the adjunction above to an adjunction 
\begin{center}
   \begin{tikzcd}
            \psh (\X;\C)_\emptyset \arrow[r, shift left = .75ex, "{\textup{Lan}_i F}"] & \psh(\X_0;\C)_\emptyset. \arrow[l, shift left=.75ex, "i^*"] 
                 
        \end{tikzcd}
\end{center}
Moreover, for $F$ in $\psh(\X;\C)_\emptyset$ and for $G$ in $\psh (\X_0;\C)_\emptyset$ we see that the the unit $F\to i^*\textup{Lan}_i F$ and co-unit $\textup{Lan}_i i^*G \to G$ are equivalences. Therefore the adjunction is an equivalence.

\end{proof}

\section{Sites with blowups}\label{section:sites_with_blowups}
In this section we define the categories and the cd-structures that will play a role in proving the main theorem, and show that they have the desired properties. Throughout this section, $k$ is an arbitrary fixed field. Let $\var$ denote the category of algebraic varieties over $k$, let $\comp$ denote the subcategory of compact varieties (i.e., proper over $k$) and let $\smcomp$ denote the subcategory of smooth and compact varieties. We define $\comp_0$ to be the category of compact varieties, with a unique added morphism $X \to \emptyset$ for every compact variety $X$, as in Definition \ref{defn:zero_object}. Lastly, we define $\span$ to be the category with as objects algebraic varieties over $k$, where a morphism $X\to Y$ is a span
$$X\hookleftarrow U \xrightarrow{p} Y  $$
where $U$ is an open subvariety of $X$ and $p$ is a proper morphism. We denote such a morphism also by $X \dashrightarrow_U Y$, or simply by $X \dashrightarrow Y$ if $U$ is clear from the context. The composition of two spans $X \dashrightarrow_U Y$ and $Y \dashrightarrow_V Z$ is defined by taking the pullback of $U \to Y$ along $V \hookrightarrow Y$. In the aforementioned notation, the resulting morphism is $X \dashrightarrow_{U\times_YV } Z$.

\begin{lem}\label{lem:pullbacks_span}
If
\begin{center}
    \begin{tikzcd}
        B \arrow[r]\arrow[d]& Y \arrow[d]\\
        A \arrow[r]&X
    \end{tikzcd}
\end{center}
is a pullback square of varieties where all morphisms are proper, then it is also a pullback when considered as a diagram in $\span$.
\end{lem}
\begin{proof}
Let $P$ be an object with spans  $P \dashrightarrow_{U_0} Y$ and $P \dashrightarrow_{U_1} A$ such that the compositions $P \dashrightarrow_{U_0} X$ and $P \dashrightarrow_{U_1} X$ are equal. Then it must be the case that $U_0 = U_1=U$, and the morphisms $U \to A$ and $U\to Y$ induce a unique morphism $U \to B$. Now $P \dashrightarrow_U  B$ is the unique morphism in $\span$ that makes
\begin{center}
\begin{tikzcd}
P \arrow[rdd, "U"', dashed, bend right] \arrow[rrd, "U" ', dashed, bend left] \arrow[rd, "U" ', dashed] &                                       &                     \\
                                                                                                    & B \arrow[r, dashed] \arrow[d, dashed] & Y \arrow[d, dashed] \\
                                                                                                    & A \arrow[r, dashed]                   & X                  
\end{tikzcd}
\end{center}
commute, which shows that $B$ has the desired universal property in $\span$.
\end{proof}
Recall that a proper morphism of varieties is a monomorphism if and only if it is a closed immersion.
\begin{cor}\label{cormonosinspan}
If a proper morphism of varieties $f:Y \to X$ is a monomorphism in $\var$, i.e., a closed immersion, then it is also a monomorphism when considered as a morphism of $\span$. 
\end{cor}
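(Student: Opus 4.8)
The plan is to deduce this immediately from Lemma \ref{lempullbacksinspan} together with the standard characterisation of monomorphisms by pullback squares. Recall that in any category $\C$, a morphism $g\colon Y \to X$ is a monomorphism if and only if the square
\begin{center}
\begin{tikzcd}
Y \arrow[r, "\id"]\arrow[d, "\id"'] & Y \arrow[d, "g"]\\
Y \arrow[r, "g"] & X
\end{tikzcd}
\end{center}
is Cartesian, i.e.\ exhibits $Y$ as $Y \times_X Y$ with both projections equal to the identity. So the content of the corollary is simply that this square, which is a pullback of varieties by the hypothesis that $f$ is a monomorphism in $\var$, remains a pullback after passing to $\span$.

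First I would observe that every morphism occurring in the displayed square is proper: the two copies of $f$ by assumption, and the two identity morphisms trivially (they are isomorphisms). Hence Lemma \ref{lempullbacksinspan} applies verbatim and tells us that the square is still a pullback when viewed as a diagram in $\span$, where $f$ is understood as the span $Y \xhookleftarrow{\id} Y \xrightarrow{f} X$ and the identity of $Y$ is the span $Y \xhookleftarrow{\id} Y \xrightarrow{\id} Y$. One should note here that the composites $\id \circ f$ and $f \circ \id$ computed in $\span$ (by pulling back along $\id$) are again $f$, so that this is indeed the commutative square obtained from the original one by applying the identity-on-objects functor from varieties-with-proper-morphisms to $\span$.

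Applying the same characterisation of monomorphisms, now in the category $\span$, the conclusion that this square is Cartesian in $\span$ says exactly that $f$ is a monomorphism in $\span$, which is the assertion. There is no real obstacle here: the only things to check are the (immediate) fact that identity morphisms count as proper morphisms and the equally immediate bookkeeping identifying the span that represents $f$ and verifying that the square in $\var$ maps to the analogous square in $\span$; both are routine.
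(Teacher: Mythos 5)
Your proof is correct and is exactly the argument the paper intends: the paper states this as an immediate corollary of Lemma \ref{lempullbacksinspan}, and the standard route is precisely the pullback characterisation of monomorphisms applied to the square with two identity legs and two copies of $f$, all of which are proper. The bookkeeping you include (identities are proper, the functor from varieties-with-proper-morphisms to $\span$ preserves identities and composites) is the right thing to check and is indeed routine.
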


\begin{defn}
A cartesian square of varieties 
\begin{equation}\label{eq:absblowupsquare}
    \begin{tikzcd}
    E \arrow[d]\arrow[r]& Y\arrow[d, "p"]\\
    C \arrow[r, "i"]& X
    \end{tikzcd}
\end{equation}
is called an abstract blowup square if $i$ is a closed immersion, $p$ is a proper morphism, and if $p$ induces an isomorphism $p: p^{-1}(X \setminus C) \to X \setminus C$.
For $U$ an open subvariety of $X$, the square 
\begin{equation}\label{eq:localisationsquare}
\begin{tikzcd}
    X \setminus U \arrow[r,dashed] \arrow[d, dashed] & X \arrow[d,dashed] \\
    \emptyset  \arrow[r,dashed] & U 
\end{tikzcd}
\end{equation}
in $\span$ is called a localisation square.
\end{defn}

Note that we can view abstract blowup squares as squares in $\span$ as well, and abstract blowup squares of compact varieties as squares in $\comp_0$. The table below summarises which cd-structures we consider on which categories, and how they are denoted.\\

\begin{center}
    \begin{tabular}{|c|c|c|}
         \hline
         \textbf{Category}  & \textbf{Cd-structure} & \textbf{Distinguished squares} \\
         \hline 
         $\smcomp$ & $B$ & Blowup squares of smooth and compact varieties \\ \hline
         $\comp$ & $A^\textup{comp}$ & Abstract blowup squares of compact varieties \\
         \hline
         $\comp_0$ & $A^\textup{comp}$ & Abstract blowup squares of compact varieties \\ \hline
         $\var$ & $A$ & Abstract blowup squares \\ \hline
         $\span$ & $A $ & Abstract blowup squares \\ \hline
         $\span$ & $L$ & Localisation squares \\ \hline 
         $\span$ & $A\cup L$ &  Abstract blowup squares and localisation squares\\ \hline
    \end{tabular}
    \captionof{table}{Overview of cd-structures}\label{tabel}
\end{center}
Here a blowup square is a square of the form (\ref{eq:absblowupsquare}) where $Y$ is the blowup of $X$ along $A$. All these cd-structures will be shown to be compatible with a dimension function that is induced by the following dimension function on $\var$.

\begin{defn}\label{defn:dimension_function}
On the category $\var$ we define the usual dimension function, with $\dim(X)$ the largest integer $d$ such that there exists a chain 
$$V_0 \subset V_1 \subset \dots \subset V_d $$
of distinct, non-empty irreducible subvarieties of $X$. By convention we set $\dim(\emptyset) = -1$. This dimension function induces a dimension function on $\smcomp$, $\comp$, $\comp_0$, and $\span$.
\end{defn}
Standard arguments imply the following.
\begin{lem}\label{lem:dimension}
For $X$ a variety with $C$ a closed subvariety such that $X \setminus C$ is dense in $X$, we have $\dim(X \setminus C) = \dim(X)$ and $\dim(C)<\dim(X)$.
\end{lem}

In the following proposition, we summarise the properties of the cd-structures in the table above.
\begin{prop}
The cd-structures $B$ on $\smcomp$, $A^\textup{comp}$ on $\comp$, $A^\textup{comp}$ on $\comp_0$ and $A\cup L$ on $\span$ are compatible with the dimension function on varieties. The cd-structures $B$ on $\smcomp$ and $A^\textup{comp}$ on $\comp$ are regular and complete, and $A^\textup{comp}$ on $\comp_0$ and $A\cup L$ on $\span$ are c-regular and c-complete. 
\end{prop}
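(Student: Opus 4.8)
The plan is to verify each of the claimed properties separately, treating the four cd-structures in turn and invoking the relevant lemmas from Sections~\ref{sectcoursetop} and~\ref{sectdim} together with standard facts from birational geometry (resolution of singularities, Nagata compactification, the universal property of blowups). Since $B \subseteq A^{\textup{comp}} \subseteq A$ in an evident sense, and since Lemma~\ref{lemmadatvoevodskybestevenzelfhadkunnendoen} says a cd-structure has the same associated (c-)topology and (c-)completeness as a sub-cd-structure that generates the same sieves, a good strategy is to first establish the properties for the "large" cd-structures and then descend, or conversely to establish them for the small ones and lift; in practice the cleanest route is to check \emph{regularity} and \emph{compatibility with the dimension function} directly and to deduce \emph{completeness} from a resolution-of-singularities / compactification argument.

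\textbf{Compatibility with the dimension function.} For $B$ on $\smcomp$, each distinguished square is a blowup square with center $A \subsetneq X$; by Lemma~\ref{lemdimension}, $\dim(A) < \dim(X)$, $\dim(Bl_A X) = \dim(X)$ (the blowup is proper and birational), and the exceptional divisor $E$ satisfies $\dim(E) < \dim(X)$ since it is a proper closed subvariety that maps to $A$. So we may take $P' = P = B$. For $A^{\textup{comp}}$ on $\comp$ (and on $\comp_0$, where the squares are the same diagrams), an abstract blowup square has $i : C \hookrightarrow X$ a closed immersion with $C \subsetneq X$ on each component where $p$ is not an isomorphism; one takes $P'$ to consist of the abstract blowup squares with $C$ nowhere dense, and checks that a general abstract blowup square's sieve $\langle i, p\rangle$ contains a simple $P'$-cover by decomposing along irreducible components of $X$ (on components contained in $C$, the map $i$ is already covering). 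Then $\dim(C) < \dim(X)$ and $\dim(E) < \dim(X)$ by Lemma~\ref{lemdimension} applied to $Y$. For $A \cup L$ on $\span$: the localisation squares~(\ref{localisationsquare}) have $\dim(X\setminus U) \le \dim(X)$, $\dim(X) \le \dim(X)$, and $\dim(\emptyset) = -1 < \dim(X)$ (provided $X \ne \emptyset$; if $X = \emptyset$ the square is degenerate and can be excluded from $P'$), and the abstract blowup squares are handled as before; one takes $P'$ to be $L' \cup (A^{\textup{comp}})'$ suitably and verifies the simple-cover condition.

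\textbf{Regularity / c-regularity.} Condition~(a) (Cartesian) is built into the definitions of blowup square, abstract blowup square, and — via Lemma~\ref{lempullbacksinspan} — localisation square read in $\span$. Condition~(b) ($i$ a monomorphism): closed immersions are monomorphisms in $\var$ and, being proper, remain monomorphisms in $\span$ by Corollary~\ref{cormonosinspan}; for a localisation square the map $X\setminus U \dashrightarrow X$ is given by the identity span, hence a monomorphism. Condition~(c) is the substantive point: one must show $\rho(Y) \amalg \rho(B)\times_{\rho(A)}\rho(B) \to \rho(Y)\times_{\rho(X)}\rho(Y)$ is an epimorphism of (c-)sheaves. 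For blowup and abstract blowup squares this is exactly Voevodsky's computation that the relevant squares are "upper-distinguished," and the argument carries over verbatim to the $\comp$, $\comp_0$ and $\span$ settings because the fibre product $Y\times_X Y$ decomposes as the diagonal copy of $Y$ glued along $B\times_A B$ over the complement of the center; I would cite \cite{voevodskycdh} and note the translation. For the localisation square in $\span$ one checks directly that $X \times_X X$ in the $\span$-sense (which by Lemma~\ref{lempullbacksinspan}-type reasoning is just $X$ with the obvious spans, since the outer maps $\emptyset \dashrightarrow U$ and $X\setminus U \dashrightarrow X$ give no new identifications) matches $\rho(X\setminus U) \amalg \rho(\emptyset)\times_{\rho(\emptyset)}\rho(\emptyset)$ appropriately, so~(\ref{epiforregularity}) is even an isomorphism.

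\textbf{Completeness / c-completeness.} By Lemma~\ref{ccomplem} it suffices to show that for each distinguished square and each morphism $f : X' \to X$, the pullback sieve $f^*\langle i, p\rangle$ contains a simple cover. For $B$ on $\smcomp$ this is Voevodsky's original verification that the blowup cd-structure is complete (using that the pullback of a blowup square along any morphism, after base change, is dominated by a blowup square — here one uses resolution of singularities in characteristic zero to stay within $\smcomp$). For $A^{\textup{comp}}$ on $\comp$ and $\comp_0$, the pullback of an abstract blowup square along any morphism of compact varieties is again (dominated by) an abstract blowup square, since properness and the isomorphism-over-the-complement condition are stable under base change; the only subtlety for $\comp_0$ is the added morphisms $X \to \emptyset$, but there is no non-degenerate square one can pull back along such a map. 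For $A \cup L$ on $\span$, pullback along a span $X' \dashrightarrow X$ of an abstract blowup square uses the same base-change stability (composing spans is a pullback, by definition), and pullback of a localisation square along $X' \xdashrightarrow{U'} X$ produces, up to refinement by another localisation square, the localisation square for the open $U' \cap p^{-1}(U) \subseteq X'$ — here one must chase the span composition carefully. I expect \textbf{this last verification — c-completeness of $A\cup L$ on $\span$, i.e. the stability of the simple covers under pullback along arbitrary spans} — to be the main obstacle, because spans compose by a pullback that mixes the open-immersion and proper parts, so one has to check that pulling back a localisation cover yields a sieve still refinable by a combination of localisation and abstract-blowup simple covers; the key input is that for $U' \subseteq X'$ open one has the localisation cover $\{X'\setminus U' \dashrightarrow X', \; U' \dashrightarrow X'\}$ and that open immersions and proper maps generate, via Nagata compactification, everything in sight. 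Once these four bullet points are assembled, the proposition follows by combining them with the union/slice stability of (c-)completeness and (c-)regularity recorded in Section~\ref{sectcoursetop}.
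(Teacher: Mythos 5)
Your overall architecture matches the paper's (decomposition into irreducible components, Voevodsky's regularity and completeness results for the blowup and abstract-blowup cd-structures, Nagata compactification for everything on $\span$), but there is a genuine gap in the compatibility-with-dimension argument for $A^{\textup{comp}}$. You take $P'$ to be the abstract blowup squares with $C$ nowhere dense and claim $\dim(E)<\dim(X)$ ``by Lemma~\ref{lemdimension} applied to $Y$''. That lemma requires $Y\setminus E$ to be dense in $Y$, which fails for a general abstract blowup square: $Y$ may contain components lying entirely inside $E=p^{-1}(C)$, and these can have dimension exceeding $\dim(X)$ (for instance $Y=\widetilde X\sqcup(C\times\mathbb{P}^N)$ with the second factor mapping properly to $C$). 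So your $P'$ violates both $\dim(Y)\le\dim(X)$ and $\dim(E)<\dim(X)$, and the required bound on $\dim(Y)$ is never addressed at all. The missing idea is the paper's refinement step: after restricting to an irreducible component $X_i$, replace $Y_i$ by the closure $\overline{Y_i\setminus E_i}$ inside $Y_i$. The resulting square over $X_i$ is again an abstract blowup square, now with $Y_i\setminus E_i$ dense upstairs so that Lemma~\ref{lemdimension} does apply, and its simple cover still lies in $\langle i,p\rangle$ because $\overline{Y_i\setminus E_i}\to X$ factors through $p$. Without this step the claimed $P'$ simply does not exist inside your class of squares.

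A second, smaller gap concerns condition (c) of c-regularity for the localisation squares: you assert that $y_X\times_{y_U}y_X$ carries ``no new identifications'' so that (\ref{epiforregularity}) is ``even an isomorphism''. This is precisely the point that needs proof, and the naive statement fails for reducible test objects $Y$: a pair of spans $Y\hookleftarrow V\to X$ and $Y\hookleftarrow V'\to X$ agreeing after composition with $X\dashrightarrow U$ need not be equal, nor factor through $X\setminus U$, unless one first covers $Y$ by its irreducible components. The paper's argument reduces to irreducible $Y$, splits into the cases where $p^{-1}(U)$ is empty or dense in $V\cap V'$, glues $p$ and $p'$ along $V\cap V'$, and deduces $V=V'$ from properness plus density of the open immersion $V\to V\cup V'$; to convert this local surjectivity into an epimorphism of sheaves one additionally needs that representable presheaves on $\span$ are $\tau^c_{A\cup L}$-separated, which rests on the joint surjectivity of simple covers. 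None of this is automatic and all of it is absent from your sketch. Your remaining points (regularity of $B$ and $A^{\textup{comp}}$ via the cdh references, completeness via Lemma~\ref{ccomplem}, the zero morphisms in $\comp_0$ pulling back to the maximal sieve, and Nagata compactification as the key input for c-completeness on $\span$) are in line with the paper.
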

This proposition will follow from the lemmas that occupy the rest of this section. 

\begin{rmk}\label{rmk:hypercompleteness}
For the cd-structure on $\comp$ \cite[Corollary 5.10]{voevodsky} and the variant of this result \cite[Theorem 3.2.5]{AHW} applies, showing that $\infty$-sheaves on $\comp$ coincide with $\infty$-presheaves that send distinguished squares to pullbacks. The verification of the proposition above for $\comp$ can therefore be interpreted as showing that the $\infty$-topos $\sh(\comp;\mathcal{S})$ is hypercomplete.
\end{rmk}

\begin{lem}\label{lem:bounded1}
The cd-structure $A$ on $\var$ is compatible with the dimension function as defined in Definition \ref{defn:dimension_function}.
\end{lem}
\begin{proof}
Let $A'$ be the cd-structure consisting of abstract blowup squares of the form (\ref{eq:absblowupsquare}) such that $\dim(E)<\dim(X)$, $\dim(C)\leq \dim(X)$ and $\dim(Y)\leq \dim(X)$. Now consider an arbitrary abstract blowup square $Q$ as in (\ref{eq:absblowupsquare}); we need to find a simple $A'$-cover in $\langle i,p\rangle $. If $X$ is the union of closed subsets $X_1,X_2$, one not contained in the other, and $X_1$ is irreducible, then we observe that there is an abstract blowup square
\begin{center}
\begin{tikzcd}
X_1\cap X_2 \arrow[r] \arrow[d] & X_1 \arrow[d] \\ 
X_2 \arrow[r] & X.
\end{tikzcd}
\end{center}
Since a $X_1$ irreducible, and $X_1 \cap X_2$ a proper subset of $X_1$, we have $\dim(X_1 \cap X_2)<\dim(X_1)$, so the square is in $A'$. This shows that for $X_0,\dots,X_k$ the irreducible components of $X$, $\{X_i \to X \}_{i=0}^k $ is a simple $A'$-cover. Now consider the squares 
\begin{center}
    \begin{tikzcd}
    E_i \arrow[r]\arrow[d]& Y_i\arrow[d] \\
    C_i \arrow[r]& X_i,
    \end{tikzcd}
\end{center}
denoted $Q\times_X X_i$, which are obtained by pulling back $Q$ along $X_i \to X$. Since $X_i$ is irreducible, we have $\dim(C_i)<\dim (X_i)$. Let $\overline{Y_i\setminus E_i}$ be the closure of $Y_i \setminus E_i$ in $Y_i$. For each $i$, consider the pullback square
\begin{equation}\label{eq:square_i}
     \begin{tikzcd}
    \overline{Y_i \setminus E_i}\cap E_i \arrow[r]\arrow[d]& \overline{Y_i\setminus E_i}\arrow[d] \\
    C_i \arrow[r]& X_i,
    \end{tikzcd}
\end{equation}
where $\overline{Y_i \setminus E_i} \setminus (\overline{Y_i \setminus E_i}\cap E_i) = Y_i\setminus E_i$ is dense in $\overline{Y_i \setminus E_i}$ by construction. By Lemma \ref{lem:dimension} it follows that $\dim (\overline{Y_i\setminus E_i})= \dim(Y_i\setminus E_i) = \dim(X_i \setminus C_i) = \dim(X_i)$ and $\dim ( \overline{Y_i \setminus E_i}\cap E_i) < \dim (X_i)$, so the square (\ref{eq:square_i}) is in $A'$.

Now precomposing the cover $\{X_i \to X \}_{i=0}^k $ with the squares (\ref{eq:square_i}) gives simple  $A'$-cover. Drawing the diagram of arrows from (\ref{eq:square_i}) to $Q\times_X X_i$ to $Q$ shows that the the arrows $C_i \to X_i \to X$ and $\overline{Y_i\setminus E_i} \to X_i \to X$ factor through $i$ and $p$ respectively, so we have found a simple $A'$-cover that is in $\langle i,p \rangle$, showing that $A$ is compatible with the dimension function. 
\end{proof}
\begin{rmk}\label{rmk:bounded}
From the proof of the lemma above, it follows that $A$ as cd-structure on $\span$, and the cd-structure $A^\textup{comp}$ on $\comp$ and on $\comp_0$ are compatible with the dimension function. 
\end{rmk}

\begin{lem}
The cd-structure $B$ on $\smcomp$ is compatible with the dimension function on varieties as defined in Definition \ref{defn:dimension_function}.
\end{lem}
\begin{proof}
Let $B'$ be the cd-structure consisting of blowup squares of the form (\ref{eq:absblowupsquare}) such that $\dim(E)<\dim(X)$, $\dim(C)\leq \dim(X)$ and $\dim(Y)\leq \dim(X)$.

We note that every smooth variety is the disjoint union of smooth, irreducible varieties. If $X$ is the disjoint union of $X_1$ and $X_2$, then the square
\begin{center}
    \begin{tikzcd}
       \emptyset \arrow[r] \arrow[d]& X_1 \arrow[d]\\
       X_2 \arrow[r] & X
    \end{tikzcd}
\end{center}
is a blowup. This shows that for $X_0,\dots,X_k$ the irreducible components of $X$, $\{X_i \to X \}_{i=0}^k $ is a simple $B'$-cover. In general pullbacks do not always exist in $\smcomp$. However, for $Q$ a blowup of the form \ref{eq:absblowupsquare} and $X_i$ a connected component of $X$, the pullback of $Q$ along $X_i \to X$ is again a blowup of smooth varieties. Therefore by the same arguments as in the proof of Lemma \ref{lem:bounded1}, $\langle i,p\rangle$ contains a simple $B'$-cover, which shows that $B$ is compatible with the dimension function.
\end{proof}

\begin{lem}\label{lembounded2}
The cd-structure $L$ on $\span$ is compatible with the dimension function.
\end{lem}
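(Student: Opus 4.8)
The plan is to exhibit a sub-cd-structure $L' \subseteq L$ witnessing compatibility with the dimension function, following the pattern used for $A$ in Lemma~\ref{lembounded1}. The natural candidate is the collection $L'$ of those localisation squares (\ref{localisationsquare}) for which the open subvariety $U$ is dense in $X$.

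First I would check the dimension inequalities for squares in $L'$. Matching the localisation square (\ref{localisationsquare}) with the template (\ref{distsquare}), the bottom-right object is $U$, the top-right object is $X$, the top-left object is $X \setminus U$, and the bottom-left object is $\emptyset$. So what is required is $\dim(X) \leq \dim(U)$, $\dim(X \setminus U) < \dim(U)$ and $\dim(\emptyset) \leq \dim(U)$. The last inequality is automatic since the dimension function takes values in $\mathbb{Z}_{\geq -1}$; the first two are exactly Lemma~\ref{lemdimension} applied to the closed subvariety $X \setminus U \subseteq X$, whose complement $U$ is dense by the defining hypothesis of $L'$: this gives $\dim(U) = \dim(X)$ and $\dim(X \setminus U) < \dim(X) = \dim(U)$.

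Next, given an arbitrary localisation square $Q$ attached to an open subvariety $U \subseteq X$, I would produce a simple $L'$-cover inside the sieve $\langle i, p \rangle$ on $U$ generated by its legs $i : \emptyset \dashrightarrow U$ and $p : X \dashrightarrow U$. Let $\bar U$ be the closure of $U$ in $X$; then $U$ is open and dense in $\bar U$, hence the localisation square attached to $U \subseteq \bar U$ lies in $L'$ and yields the simple $L'$-cover $\{\bar U \dashrightarrow U,\ \emptyset \dashrightarrow U\}$ of $U$. The morphism $\emptyset \dashrightarrow U$ factors through $i$ because $\emptyset$ is initial in $\span$. The leg $\bar U \dashrightarrow U$ is the span $\bar U \hookleftarrow U \xrightarrow{\id} U$, and I would factor it as $\bar U \dashrightarrow X \dashrightarrow U$, where $\bar U \dashrightarrow X$ is the span $\bar U \hookleftarrow \bar U \hookrightarrow X$ given by the (proper) closed immersion $\bar U \hookrightarrow X$ and the second map is $p$. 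By the composition rule in $\span$, composing these two spans amounts to pulling back $\bar U \hookrightarrow X$ along $U \hookrightarrow X$; since $U \subseteq \bar U$ this pullback is $U$, so the composite span is $\bar U \hookleftarrow U \xrightarrow{\id} U$, as wanted. Therefore $\bar U \dashrightarrow U$ lies in $\langle i, p \rangle$, so $\langle i,p\rangle$ contains the simple $L'$-cover, which is what is needed.

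The one point to watch is the ``reversed'' orientation of localisation squares compared to abstract blowup squares: the distinguished (bottom-right) object of (\ref{distsquare}) is here the small open set $U$ rather than the ambient space, so the inequality one must force is $\dim(X) \le \dim(U)$, which is false for a general open subvariety and is exactly why $L'$ must be restricted to dense opens; passing to the closure $\bar U$ is the device that reduces the general case to the dense one. Apart from that, everything is a routine unwinding of the span composition law, so I expect no further obstacles.
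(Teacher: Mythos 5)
Your proposal is correct and matches the paper's argument: the paper also takes $L'$ to be the localisation squares with $U$ dense in $X$, verifies the dimension inequalities via Lemma \ref{lemdimension}, and refines an arbitrary localisation square by passing to the closure $\bar U$ and factoring $\bar U \dashrightarrow U$ through the closed immersion $\bar U \dashrightarrow X$ followed by $X \dashrightarrow U$. Your extra care about the orientation of the square (the distinguished object being $U$) is a correct and worthwhile clarification, but the route is the same.
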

\begin{proof}
Let $L'$ be the cd-structure consisting of squares of the form (\ref{eq:localisationsquare}) where $U$ is dense in $X$. Then we have $\dim(U) = \dim(X)$  and $\dim(X \setminus U)<\dim(X)$ by Lemma \ref{lem:dimension}. Now we show that every localisation square has a refinement that is in $L'$. Indeed, for an arbitrary square of the form (\ref{eq:localisationsquare}), let $\bar U$ be the closure of $U$ in $X$. Composing the closed inclusion $\bar U \dashrightarrow X$ with $X \dashrightarrow U$ shows that a sieve containing $X \dashrightarrow U$ also contains $\bar U \dashrightarrow U$. So $\langle \emptyset \dashrightarrow U, X \dashrightarrow U\rangle$ contains the sieve generated by the square
\begin{center}
    \begin{tikzcd}
    \overline{U} \setminus U \arrow[d,dashed] \arrow[r,dashed] & \overline{U} \arrow[d,dashed] \\
    \emptyset\arrow[r,dashed] & U 
\end{tikzcd}
\end{center}
which is in $L'$.
\end{proof}

We will need the following elementary lemma later on.
\begin{lem}\label{pullbacks}
Let 
\begin{center}
    \begin{tikzcd}
    V \arrow[d, "j", hook] \arrow[r, "f "] & U \arrow[d, hook]\\
    Y \arrow[r] & X
    \end{tikzcd}
\end{center}
be a commutative diagram of varieties, where the vertical maps are open immersions, the horizontal maps are proper, and $j(V)$ is dense in $Y$. Then it is a pullback square.
\end{lem}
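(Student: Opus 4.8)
The plan is to form the fibre product $P := Y\times_X U$ and to show that the canonical morphism $g\colon V\to P$ — which exists because the square commutes — is an isomorphism. Since $U\hookrightarrow X$ is an open immersion and open immersions are stable under base change, the projection $P\to Y$ is an open immersion; likewise, since $Y\to X$ is proper and proper morphisms are stable under base change, the projection $P\to U$ is proper. The morphism $g$ lies over $Y$ (its composite to $Y$ is $j$) and over $U$ (its composite to $U$ is $f$). Because $P\to Y$ is a monomorphism, a standard argument shows $g$ is itself an open immersion: the projection $V\times_Y P\to V$ is a base change of the monomorphism $P\to Y$, hence a monomorphism admitting the section induced by $(\id_V,g)$, so it is an isomorphism, and composing $V\xrightarrow{\sim}V\times_Y P$ with the open immersion $V\times_Y P\to P$ (a base change of $j$) exhibits $g$ as an open immersion.

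Next I would bring in properness of $f$. The composite $V\xrightarrow{g}P\to U$ equals $f$, which is proper, and $P\to U$ is separated (being proper); hence, by the cancellation property for proper morphisms ($h$ with $k\circ h$ proper and $k$ separated is proper), the open immersion $g$ is proper. A proper open immersion has image that is both open (it is an open immersion) and closed (a proper morphism is closed), so $g$ identifies $V$ with a clopen subscheme of $P$.

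Finally I would invoke the density hypothesis: since $j(V)$ is dense in $Y$ and $V$ factors through the open subscheme $P\subseteq Y$, the image $g(V)$ is dense in $P$. A dense clopen subscheme of $P$ is all of $P$, so $g$ is an isomorphism and the square is Cartesian. I do not expect a genuine obstacle here; the only care needed is the bookkeeping of stability under base change and the cancellation property $k\circ h$ proper, $k$ separated $\Rightarrow h$ proper, both standard. One could avoid the cancellation step by instead applying the valuative criterion directly to the open immersion $V\hookrightarrow P$, using that $P\to U$ and $f\colon V\to U$ are proper and that $V$ is dense in $P$.
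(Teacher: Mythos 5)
Your argument is correct and follows essentially the same route as the paper: both form $Y\times_X U$, show the canonical map from $V$ is simultaneously an open immersion and proper (hence has clopen image), and then use density of $j(V)$ in $Y$ to conclude the image is everything. The paper phrases the open-immersion and properness steps as direct cancellation against the projections and the final step as a disconnection argument, but these are the same reasoning as your base-change/cancellation bookkeeping and your ``dense clopen subscheme is everything'' conclusion.
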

\begin{proof}
Consider the morphism $\phi: V \to Y \times_X U $ that arises from this diagram. It is proper since composing with the proper map $Y\times_X U \to U$ gives the proper map $f$. It is also an open immersion, since composing with the open immersion $ i:Y\times_X U \to Y$ gives the open immersion $j:V \to Y$. Therefore the image $\phi(V) \subseteq Y\times_X U$ is both open and closed. If we could write $Y \times_X U = \phi(V) \amalg Z$ with $Z$ non-empty, then $i(Z)$ would be an open in $Y$ disjoint with $i(\phi(V)) = j(V)$, which contradicts $j(V)$ being dense in $Y$. Therefore we have $\phi(V)=Y\times_X U$ and $\phi$ is an isomorphism.
\end{proof}

\begin{lem}
The cd-structures $B$ on $\smcomp$ and $A^\textup{comp}$ on $\comp$ are complete, and $A^\textup{comp}$ on $\comp_0$ and $A$ on $\span$ are c-complete.  
\end{lem}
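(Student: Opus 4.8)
The plan is to verify all four statements by means of the criterion of \cite[Lemma 2.4]{voevodsky} and its c-analogue Lemma \ref{ccomplem}: a cd-structure $P$ is (c\nobreakdash-)complete exactly when, for every distinguished square of the form (\ref{distsquare}) and every morphism $f:X'\to X$, the pulled-back sieve $f^*\langle i,p\rangle$ contains a simple $P$-cover. In each case I will produce such a cover, generated by a suitably base-changed distinguished square.

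For $A^\textup{comp}$ on $\comp$ this is essentially formal: base change preserves closed immersions, proper morphisms and the condition of being an isomorphism over the complement, so $Q\times_X X'$ is again an abstract blowup square, and it stays inside $\comp$ since a scheme proper over a compact scheme is compact and a closed subscheme of a compact scheme is compact. As $Q$ commutes, a morphism into $X'$ factoring through the legs of $Q\times_X X'$ composes with $f$ to one factoring through the legs of $Q$, so $\langle i',p'\rangle\subseteq f^*\langle i,p\rangle$ and the simple cover $\{i',p'\}$ does the job (when $X'$ or a vertex of $Q$ is empty one checks directly that $f^*\langle i,p\rangle$ is maximal). The same reasoning treats honest morphisms in $\comp_0$; and if $f$ is a zero morphism $0:X'\to X$ with $X'\not\cong\emptyset$, then every composite $0\circ h$ is again a zero morphism, which factors through the proper leg $p:Y\to X$ via the zero morphism $Z\to Y$ (and $\langle i,p\rangle$ is already maximal when $Y\cong\emptyset$), so $f^*\langle i,p\rangle$ is the maximal sieve on $X'$ and contains $\{\id_{X'}\}$; this gives c-completeness of $A^\textup{comp}$ on $\comp_0$.

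For $A$ on $\span$ I would write $f:X'\to X$ as the span $X'\xhookleftarrow{U}U\xrightarrow{g}X$ and factor it as $f=g_*\circ r$, where $g_*:U\to X$ is the pushforward along the proper morphism $g$ and $r:X'\to U$ is the ``restriction to the open subvariety $U$''. Base change of $Q$ along $g$ yields an abstract blowup square $Q_U$ over $U$ that, by Lemma \ref{lempullbacksinspan}, is also a pullback square in $\span$, so $\langle i_U,p_U\rangle\subseteq g_*^*\langle i,p\rangle$. To deal with $r$ I would extend $Q_U$ to a distinguished square over $X'$: pick a Nagata compactification $Y_U\hookrightarrow Z\to X'$ of the composite $Y_U\xrightarrow{p_U}U\hookrightarrow X'$, put $Y':=\overline{Y_U}$ (closure in $Z$) with $p':Y'\to X'$ proper, so that ${p'}^{-1}(U)=Y_U$ with $p'|_{Y_U}=p_U$ by Lemma \ref{pullbacks}, and set $C':=(X'\setminus U)\cup\overline{C_U}$, $E':={p'}^{-1}(C')$. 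The square with legs $p'$ and $C'\hookrightarrow X'$ is then an abstract blowup square, since it is an isomorphism over $X'\setminus C'=U\setminus\overline{C_U}\subseteq U\setminus C_U$; and because restricting $Y'$ and $C'$ back along $U\hookrightarrow X'$ recovers $Y_U$ and $C_U$, a morphism into $X'$ factoring through $p'$ or $i'$ becomes, after composing with $r$, one factoring through $p_U$ or $i_U$. Hence $\{i',p'\}$ is a simple $A$-cover contained in $r^*\langle i_U,p_U\rangle\subseteq f^*\langle i,p\rangle$.

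Finally, for $B$ on $\smcomp$, given a blowup square $Q$ (so $Y=Bl_C X$ with $C$ smooth) and $f:X'\to X$, I would decompose $X'$ into its connected components, which are its irreducible components since $X'$ is smooth — this is a simple $B$-cover — so that I may assume $X'$ irreducible. If $f(X')\subseteq C$ then $f$ factors through $i$ and $\{\id_{X'}\}$ works; otherwise $D:=f^{-1}(C)$ is a proper closed subscheme, and it is here that I expect the real obstacle and that resolution of singularities, hence the characteristic-zero hypothesis, is needed. By Hironaka's principalization theorem there is a composition $\pi:\widetilde{X'}\to X'$ of blowups along smooth centers $Z_k$ lying over $D$, an isomorphism over $X'\setminus D$, after which $(f\pi)^{-1}\mathcal{I}_C\cdot\mathcal{O}_{\widetilde{X'}}$ is invertible. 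Each blowup step is a distinguished $B$-square of smooth compact varieties, so iterating the construction of simple covers produces a simple $B$-cover of $X'$ consisting of $\pi$ together with the morphisms $Z_k\to X'$; the member $\pi$ lies in $f^*\langle i,p\rangle$ by the universal property of $Bl_C X\to X$ (invertibility of $(f\pi)^{-1}\mathcal{I}_C$ makes $f\pi$ factor through $p$), while each $Z_k\to X'$ lies in it because its image is contained in $D=f^{-1}(C)$ and so maps into $C$. In this argument the base-change stability and the simple-cover bookkeeping are routine; the genuinely delicate points are the construction of the extended square in the $\span$ case — which exploits the asymmetry of $\span$ under open immersions through Lemma \ref{pullbacks} and Nagata compactification — and, above all, Hironaka's principalization for $B$ on $\smcomp$.
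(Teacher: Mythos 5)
Your proposal is correct and follows essentially the same route as the paper: you verify the criterion of Lemma \ref{ccomplem} (and its non-c analogue) case by case, using stability of abstract blowup squares under base change for $\comp$ and $\comp_0$, maximality of the pulled-back sieve along zero morphisms, and, for $\span$, the same factorisation of a span into an ``open restriction'' followed by a proper morphism, handled respectively by Nagata compactification together with Lemma \ref{pullbacks} and by Lemma \ref{lempullbacksinspan}. The only divergence is that for $B$ on $\smcomp$ the paper simply cites the proof of \cite[Lemma 4.3]{voevodskycdh}, whereas you spell out the underlying argument via Hironaka's principalization; that is precisely the content of the cited result, so the mathematics coincides.
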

\begin{proof}
From the proof of \cite[Lemma 4.3]{voevodskycdh} it follows that $B$ is complete.

From \cite[Lemma 2.4]{voevodsky} and the fact abstract blowup squares are stable under pullbacks, it follows that the cd-structure $A^\textup{comp}$ on $\comp$ is complete. 

For $A^\textup{comp}$ on $\comp_0$, we use Lemma \ref{lem:c_comp}. Let $Q$ an abstract blowup of the form (\ref{eq:absblowupsquare}). It is easy to check that pullbacks in $\comp$ are also pullbacks in $\comp_0$. Therefore $f:Z \to X$ a morphism of compact varieties, the sieve $f^*\langle i,p\rangle$ in $\comp_0$ contains the sieve generated by the square $f^* Q$, which is $Q$ pulled back along $f$. If $f$ is the zero morphism $0:Z \to X$, then $f^*\langle i,p\rangle$ is the maximal sieve on $Z$, and therefore a simple cover.

For $A$ on $\span$, it suffices to check that for an abstract blowup square $Q$ of the form (\ref{eq:absblowupsquare}), $f^*\langle i,p \rangle$ contains a simple cover if $f$ is either a proper morphism $Z {\dashrightarrow}_Z X$, or an open inclusion $Z \dashrightarrow_X X$. In the first case, by Lemma \ref{lem:pullbacks_span}, $f^*\langle i,p \rangle$ contains the sieve generated by the square $f^*Q$. In the second case, we have $f$ of the form $Z \dashrightarrow_X X$. By Nagata's compactification theorem, we can factor the morphism of varieties $$Y\xrightarrow{p} X \hookrightarrow Z$$ as an open immersion followed by a proper morphism  $$Y \hookrightarrow Y' \xrightarrow{p'} Z. $$ By replacing $Y'$ by the closure of $Y$ in it, we may assume that $Y$ is dense in $Y'$. Let $C' = \overline{C} \cup (Z \setminus X)$. Then $C'$ is closed in $Z$ and $Z \setminus C' \cong X \setminus C$.
By Lemma \ref{pullbacks}, 
\begin{equation} \label{dichtpullbacksquare}
   \begin{tikzcd}
             Y \arrow[r, hook] \arrow[d, "p"] & Y'\arrow[d, "p'"]\\
             X \arrow[r, hook ]& Z
   \end{tikzcd} 
\end{equation}
is a pullback square, so the inverse image of $Z\setminus C'$ under $p'$ is contained in $Y$, and equal to $p^{-1}(X \setminus C) = Y \setminus E$. This shows that 
\begin{equation}\label{nieuwepullback}
   \begin{tikzcd}
            C'\times_Z Y'  \arrow[r, ] \arrow[d, "p"] & Y'\arrow[d, "p'"]\\
             C' \arrow[r]& Z
   \end{tikzcd} 
\end{equation}
is an abstract blowup square, 

The pullback square (\ref{dichtpullbacksquare}) moreover shows that the composition of spans
$$Y' \xrightarrow{p'}Z \hookleftarrow X$$ is equal to $Y' \dashrightarrow_{Y} X $, which shows that $p'$ is in $f^*\langle i,p \rangle$. Lastly we note that
\begin{center}
    \begin{tikzcd}
        C \arrow[r, hook] \arrow[d,  "i"]& C'\arrow[d]\\
        X \arrow[r, hook] &Z
    \end{tikzcd}
\end{center}
is pullback, which shows that $C'\to Z$ is in $f^*\langle i,p \rangle$. So $f^*\langle i,p \rangle$ contains the simple cover associated to the square (\ref{nieuwepullback}). 
\end{proof}

\begin{lem}\label{lemLccomplete}
The cd-structure $L$ on $\span$ is c-complete. 
\end{lem}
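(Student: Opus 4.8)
The plan is to apply Lemma~\ref{ccomplem}: it suffices to show that for every localisation square of the form (\ref{localisationsquare}) --- with bottom--right corner $U$ and generating morphisms $i\colon\emptyset\dashrightarrow U$ and $p\colon X\dashrightarrow U$ --- and every morphism $f\colon X'\dashrightarrow U$ of $\span$, the pulled--back sieve $f^*\langle i,p\rangle$ contains a simple $L$-cover of $X'$. Just as in the proof of the previous lemma, every morphism of $\span$ factors as an open immersion $X'\xdashrightarrow{W}W$ followed by a proper morphism $W\xdashrightarrow{W}U$, and the property that $f^*$ sends every covering sieve to a sieve containing a simple cover is closed under composition and holds for isomorphisms; so I would reduce to the two cases where $f$ is a proper morphism $X'\xdashrightarrow{X'}U$, and where $f$ is an open immersion $X'\xdashrightarrow{U}U$ (after identifying the middle term of the span with $U$), so that $U$ becomes an open subvariety of both $X'$ and $X$.

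For $f$ proper, say $f=(X'\xleftarrow{\id}X'\xrightarrow{g}U)$ with $g$ proper, I would use Nagata's compactification theorem to factor $X'\xrightarrow{g}U\hookrightarrow X$ as an open immersion followed by a proper morphism $X'\hookrightarrow Y'\xrightarrow{\chi}X$, and replace $Y'$ by the closure of $X'$ in it, so that $X'$ is dense in $Y'$. Lemma~\ref{pullbacks}, applied to the square with horizontal maps $g,\chi$ and vertical open immersions $X'\hookrightarrow Y'$ and $U\hookrightarrow X$, then gives $\chi^{-1}(U)=X'$. Since $Y'$ is a variety containing $X'$ as an open subvariety, the localisation square for $X'\subseteq Y'$ yields the simple $L$-cover $\{Y'\xdashrightarrow{X'}X',\ \emptyset\dashrightarrow X'\}$ of $X'$. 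Composing its first member with $f$ gives the span $Y'\xleftarrow{X'}X'\xrightarrow{g}U$, which --- using $\chi^{-1}(U)=X'$ and $\chi|_{X'}=g$ --- is the composite of the proper morphism $Y'\xdashrightarrow{Y'}X$ given by $\chi$ with $p$, hence lies in $\langle i,p\rangle$; composing the second member with $f$ gives a span with empty middle term, which factors through $i$. Thus this simple $L$-cover lies in $f^*\langle i,p\rangle$.

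For $f$ an open immersion $X'\xdashrightarrow{U}U$, I would likewise factor the open immersion $U\hookrightarrow X$ by Nagata as $U\hookrightarrow\bar U\xrightarrow{\chi}X$ with $\chi$ proper and $U$ dense in $\bar U$, so that $\chi^{-1}(U)=U$ by Lemma~\ref{pullbacks}. The key point is then to exhibit a variety $\tilde X$ containing $X'$ as an open subvariety, together with an open $V'\subseteq\tilde X$ containing $U$ with $\chi|_{V'}\colon V'\to X$ proper and $V'\cap\chi^{-1}(U)=U$: for then the localisation square for $X'\subseteq\tilde X$ yields the simple $L$-cover $\{\tilde X\xdashrightarrow{X'}X',\ \emptyset\dashrightarrow X'\}$, whose first member composed with $f$ is the span $\tilde X\xleftarrow{U}U\xrightarrow{\id}U$, which factors through $p$ via the span $\tilde X\xleftarrow{V'}V'\xrightarrow{\chi}X$, and whose second member composed with $f$ factors through $i$, so that this cover lies in $f^*\langle i,p\rangle$. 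To construct $\tilde X$ and $V'$ I would realise $X'$ and a neighbourhood of $U$ in $\bar U$ together as opens of one compact variety, for instance by taking $\tilde X$ to be the closure, inside a product of a compactification of $X'$ with a compactification of $\bar U$, of the locally closed copy of $U$ embedded diagonally via the two open immersions, and verifying by the valuative criterion --- using properness of $\chi$ and of the chosen compactifications --- that $X'$ and a suitable open neighbourhood of $U$ in $\bar U$ both embed as opens of $\tilde X$ with the required identity $V'\cap\chi^{-1}(U)=U$.

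I expect this last construction to be the main obstacle: one is forced to glue together the two varieties $X'$ and $\bar U$, which a priori share only the open $U$, into a \emph{separated} variety while retaining the control $V'\cap\chi^{-1}(U)=U$, and the naive pushout $X'\sqcup_U\bar U$ need not be separated (already for $X'=\bar U=\mathbb{A}^1$ and $U=\mathbb{A}^1\setminus\{0\}$ one obtains the line with doubled origin); the product--of--compactifications construction above, together with a careful valuative--criterion verification, is what is needed to circumvent this.
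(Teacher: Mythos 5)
Your reduction via Lemma \ref{ccomplem} to the two cases of a proper span and an open--immersion span is in line with how the paper handles $A$ on $\span$, and your proper case is correct: it is precisely the special case $V=Y$ of the paper's argument. The paper itself does not split into cases; it takes an arbitrary $h\colon Y\xdashrightarrow{V}U$, factors $V\to U\hookrightarrow X$ by Nagata as $V\hookrightarrow Z\to X$ with $V$ dense in $Z$, and covers $Y$ by the localisation square for $Y\subseteq Y\cup_V Z$, where $Y\cup_V Z$ is obtained by gluing $Y$ and $Z$ along the common open $V$.

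The gap is in your open--immersion case. The $\tilde X$ you propose --- the closure of the diagonal copy of $U$ in a product of compactifications --- need not contain $X'$ as an open subvariety: if $U$ is not dense in $X'$ the closure misses entire components of $X'$, and even when $U$ is dense the fibre of $\tilde X\to\overline{X'}$ over a point of $X'\setminus U$ can be positive--dimensional. Concretely, take $U=\mathbb{A}^2\setminus\{0\}$, $X'=\mathbb{A}^2$ and $X=\bar U=Bl_0\mathbb{A}^2$: over the origin your $\tilde X$ acquires the exceptional $\mathbb{P}^1$ rather than a copy of the origin, so the deferred valuative--criterion verification fails. Worse, in this example no separated $\tilde X$ with the required properties exists at all: if $\tilde X\supseteq\mathbb{A}^2$ is separated and $V'\subseteq\tilde X$ is open with $q\colon V'\to Bl_0\mathbb{A}^2$ proper, $q^{-1}(U)=U$ and $q|_U=\id$, then intersecting with the irreducible closure of $U$ in $\tilde X$ and using that a subscheme which is proper over $\mathbb{A}^2$ and open in a separated scheme over $\mathbb{A}^2$ is also closed in it, one finds that $V'$ must contain the origin and that $q|_{\mathbb{A}^2}$ is a section of the blow-down $Bl_0\mathbb{A}^2\to\mathbb{A}^2$ extending $\id_U$; no such section exists. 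So the construction cannot be repaired by a more careful choice of compactifications.

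That said, the obstacle you isolate in your final paragraph is entirely real, and it is not resolved in the paper either: the printed proof glues $Y$ and $Z$ along $V$ without checking separatedness, and in the example above this gluing is $\mathbb{A}^2\cup_{\mathbb{A}^2\setminus\{0\}}Bl_0\mathbb{A}^2$, the plane with the origin doubled into a point and a $\mathbb{P}^1$, which is not a variety in the separated sense. The example above indicates that the needed simple $L$-cover genuinely fails to exist inside the category of separated varieties, so fixing the argument requires changing the setup (for instance admitting non-separated objects, or modifying the covers generated by $L$) rather than merely sharpening the construction of $\tilde X$. You correctly diagnosed where the difficulty lies, but neither your construction nor the paper's gluing overcomes it as stated.
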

\begin{proof}
By Lemma \ref{lem:c_comp}, it suffices to show that for localisation square of the form (\ref{eq:localisationsquare}) and a morphism $h:Y \dashrightarrow_V U$ in $\span$, the sieve $R = h^*\langle \emptyset \dashrightarrow U, X \dashrightarrow U \rangle$ contains a simple $\tau^c_L$-cover. First we note that $R$ contains the morphism $\emptyset \dashrightarrow Y$ since it factors through $\emptyset \dashrightarrow U$. By Nagata's compactification theorem, we can factor the morphism of varieties $V \to U \hookrightarrow X$ as an open immersion followed by a proper morphism $V \hookrightarrow Z \to X$, where we may assume $V$ is dense in $Z$. Now we glue $Y$ and $Z$ along $V$ to obtain $Y \cup_V Z$. Consider the diagram
\begin{center}
\begin{tikzcd}
Y \arrow[d, hook] & V \arrow[r] \arrow[d, hook] \arrow[l, hook] & U \arrow[d, hook] \\
Y\cup_{V}Z    & Z \arrow[r] \arrow[l, hook]                   & X.             
\end{tikzcd}
\end{center}
We note that the square on the right is pullback of varieties by lemma \ref{pullbacks}; it follows that the outer diagram can be seen as a commuting diagram in $\span$ of morphisms starting in $Y \cup_V Z$ and ending in $U$. This shows that the open immersion $Y\cup_{V}Z \dashrightarrow Y$ composed with $h$ factors through $X \dashrightarrow U$. So $R$ contains the simple cover associated to the localisation square
\begin{center}
    \begin{tikzcd}
    (Y \cup_V Z) \setminus Y \arrow[r,dashed] \arrow[d,dashed] & Y \cup_V Z \arrow[d,dashed] \\
    \emptyset  \arrow[r,dashed] & Y 
\end{tikzcd}
\end{center} which shows that $L$ is c-complete.
\end{proof}

\begin{rmk}\label{rmkcompactelocalisation}
Let $L''$ be the cd-structure consisting of squares of the form (\ref{eq:localisationsquare}) with $X$ compact. Then for every localisation square of the form (\ref{eq:localisationsquare}), the sieve
$$R= \langle \emptyset \dashrightarrow U, X \dashrightarrow U \rangle $$
contains a simple $L''$-cover. Indeed, Let $Y \dashrightarrow X$ be a compactification of $X$, then
\begin{center}
    \begin{tikzcd}
       Y \setminus U \arrow[d, dashed] \arrow[r,dashed] & Y \arrow[d, dashed] \\
       \emptyset  \arrow[r,dashed]& U 
    \end{tikzcd}
\end{center}
is a localisation square and $Y \dashrightarrow U$ is in $R$. It follows from Lemma \ref{lemLccomplete} an Lemma \ref{lem:refinement} that $L''$ is a c-complete cd-structure.
\end{rmk}

\begin{lem}\label{lemregularAAc}
The cd-structure $A^\textup{comp}$ on $\comp$ is regular, and $A^\textup{comp}$ on $\comp_0$ and $A$ on $\span$ are c-regular.
\end{lem}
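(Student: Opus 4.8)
The plan is to verify the three conditions (a), (b), (c) in the definition of c-regularity (and regularity, in the case of $\comp$) for each of the three cd-structures in turn. Condition (a) is immediate: abstract blowup squares are Cartesian by definition, both in $\var$ and, by Lemma \ref{lempullbacksinspan}, in $\span$ (and the analogous check in $\comp_0$ is the easy observation, already used above, that pullbacks in $\comp$ remain pullbacks in $\comp_0$). Condition (b) asks that the closed immersion $i:C\to X$ be a monomorphism in the relevant category; in $\comp$ this is clear, and in $\span$ it follows from Corollary \ref{cormonosinspan}, since a closed immersion is a proper monomorphism of varieties.

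The real content is condition (c): that the map of sheaves
$$\rho(Y)\amalg \rho(E)\times_{\rho(C)}\rho(E) \to \rho(Y)\times_{\rho(X)}\rho(Y)$$
is an epimorphism. I would follow Voevodsky's strategy for the cdh-topology closely. The point is that epimorphisms of sheaves can be checked locally, so it suffices to show that any map $\rho(T)\to \rho(Y)\times_{\rho(X)}\rho(Y)$ from a representable, after passing to a cover of $T$, factors through the left-hand side. Such a map amounts to a pair of morphisms $g_0,g_1:T\rightrightarrows Y$ with $p g_0=p g_1=:f$. Over the locus where $f$ lands in $X\setminus C$, the maps $g_0,g_1$ agree because $p$ is an isomorphism there; so the ``disagreement'' is supported over $f^{-1}(C)$. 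One then covers $T$ by the two closed pieces: the closure of $T\setminus f^{-1}(C)$, over which $g_0=g_1$ and the map factors through the $\rho(Y)$ summand, and $f^{-1}(C)$ (or rather a piece mapping into it), over which both $g_0,g_1$ factor through $E\to Y$ and hence the map factors through $\rho(E)\times_{\rho(C)}\rho(E)$. In the $\smcomp$/$\comp$ case this ``two closed pieces'' cover is exactly a composite of abstract blowup squares, so it is a $\tau^c$-cover; for $\span$ one has to produce the covering using localisation squares and Nagata compactification, in the spirit of the c-completeness proofs above (Lemma \ref{lemLccomplete} and Remark \ref{rmkcompactelocalisation}). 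In fact, by Remark \ref{rmkregvscreg} the statement for $\comp$ reduces to checking genuine regularity, which can likely be cited from or adapted directly from \cite[Section 4]{voevodskycdh}.

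I expect the main obstacle to be the $\span$ case of condition (c), and more specifically handling the non-strictness of the initial object and the presence of the extra span-morphisms when building the local cover that trivialises the disagreement of $g_0$ and $g_1$. The subtlety is that a map $\rho(T)\to\rho(Y)\times_{\rho(X)}\rho(Y)$ in the $\span$-site is a diagram of \emph{spans}, not of honest morphisms, so ``$f^{-1}(C)$'' and ``the locus where $g_0=g_1$'' must be interpreted correctly; one first reduces to the case where the span $T\dashrightarrow Y$ is represented by an honest morphism after restricting to a suitable open, then compactifies as in Lemma \ref{pullbacks} and Lemma \ref{lemLccomplete} to extend the resulting closed decomposition to a localisation-square cover of $T$. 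Once the cover is in hand, the factorisation of the map through the two summands is formal, using that $p$ restricts to an isomorphism over $X\setminus C$ and that $E\to C$, $E\to Y$ are the pullback legs. I would also remark, as the excerpt's Remark \ref{rmkregvscreg} invites, that for $A^{\comp}$ on $\comp$ it is cleanest to verify Voevodsky's stronger notion of regularity and invoke that remark, so that only the genuinely new cases ($\comp_0$ and $\span$) require the c-topology argument.
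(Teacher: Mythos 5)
Your overall route is the paper's route: the paper's proof is essentially a citation --- it invokes \cite[Lemma 2.11]{voevodskycdh} for regularity of $A$ on $\var$, observes that the same proof gives regularity of $A^\textup{comp}$ on $\comp$ and (since pullbacks of varieties remain pullbacks in $\span$ by Lemma \ref{lempullbacksinspan}, and in $\comp_0$) of $A$ on $\span$ and $A^\textup{comp}$ on $\comp_0$, and then applies Remark \ref{rmkregvscreg} to pass from regular to c-regular. The ``two closed pieces'' argument you spell out for condition (c) --- splitting $T$ into the closure of $f^{-1}(X\setminus C)$, where $g_0=g_1$ by separatedness, and $f^{-1}(C)$, where both factor through $E$ --- is exactly the content of the cited lemma, so there is no divergence in substance for the $\comp$ and $\comp_0$ cases.

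The one step in your plan that would go wrong is the $\span$ case of condition (c). The lemma asserts that $A$ \emph{alone} is c-regular on $\span$, so the epimorphism \eqref{epiforregularity} must be witnessed by $\tau^c_A$-covers; localisation squares belong to $L$, not $A$, and a cover built from them is only a $\tau^c_{A\cup L}$-cover. Carrying out your plan literally would therefore prove that the map becomes an epimorphism after $\tau^c_{A\cup L}$-sheafification --- which happens to be what Lemma \ref{lemcregularAL} ultimately needs, but is not the statement of this lemma, and the paper's Lemma \ref{lemcregularAL} deduces the $\tau^c_{A\cup L}$ statement \emph{from} the $\tau^c_A$ one by exactness of the further sheafification. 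Moreover the detour through Nagata and localisation squares is unnecessary: a section of $y_Y\times_{y_X}y_Y$ over $T$ in $\span$ is a pair of spans with a common open part $V\subseteq T$ and proper maps $g_0,g_1:V\to Y$ with $pg_0=pg_1=f$, and restricting a span along a closed immersion $T'\hookrightarrow T$ simply intersects its open part with $T'$. So taking $T_1=\overline{f^{-1}(X\setminus C)}$ (closure in $T$, not in $V$) and $T_2$ the union of $\overline{f^{-1}(C)}$ with $T\setminus V$ gives a closed decomposition of $T$, hence an abstract blowup cover, on whose pieces the section factors through the two summands exactly as in the $\var$ case. This is why the paper can transfer Voevodsky's proof verbatim; no compactification is needed.
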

\begin{proof}
The cd-structure $A$ on $\var$ is regular by \cite[Lemma 2.14]{voevodskycdh}. From the proof it follows that $A^\textup{comp}$ as cd-structure on $\comp$ is regular as well. By Lemma \ref{lem:pullbacks_span} pullbacks in $\var$ are also pullbacks in $\span$, so by Lemma \ref{lem:cregularity} it follows that $A$ is c-regular on $\span$. Similarly, since pullbacks in $\comp$ are pullbacks in $\comp_0$, $A^\textup{comp}$ is c-regular on $\comp_0$.\end{proof}

\begin{lem}
The cd-structure $B$ on $\smcomp$ is regular.
\end{lem}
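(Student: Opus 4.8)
The plan is to verify, for every distinguished square of $B$ — which has the form (\ref{absblowupsquare}) with $Y = Bl_C X$ the blowup of the smooth compact variety $X$ along the smooth closed subvariety $C$, with $p$ the blowdown and $E = p^{-1}(C)$ the exceptional divisor — the three conditions of regularity of \cite[Definition 2.10]{voevodsky}, namely the Cartesianness of the square, the fact that $i$ is a monomorphism, and an epimorphism condition (the analogue of (a)--(c) in the definition of c-regularity, with $\tau^c_B$ replaced by $\tau_B$). Conditions (a) and (b) are immediate: the square is Cartesian because $E = C\times_X Y$ for a blowup and $\smcomp$ is a full subcategory of $\var$, so any cone over $C \to X \ot Y$ with vertex in $\smcomp$ is such a cone in $\var$ and factors uniquely through $E$; and $i\colon C \to X$, being a closed immersion, is a monomorphism in $\var$ and a fortiori in $\smcomp$. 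The remaining content is condition (c): that
\[
\rho(Y)\amalg\bigl(\rho(E)\times_{\rho(C)}\rho(E)\bigr)\longrightarrow \rho(Y)\times_{\rho(X)}\rho(Y)
\]
is an epimorphism of $\tau_B$-sheaves of sets.

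I would first identify both sides geometrically. Since $C$ is smooth in the smooth variety $X$, the embedding is regular and $E = \mathbb{P}(N_{C/X})$ is a projective bundle over $C$; hence $E$ and $E\times_C E$ are smooth and compact, so $E\times_C E$ lies in $\smcomp$ and $\rho(E)\times_{\rho(C)}\rho(E) = \rho(E\times_C E)$. As sheafification preserves finite limits, $\rho(Y)\times_{\rho(X)}\rho(Y)$ is the sheafification of the presheaf $T\mapsto \hom(T, Y\times_X Y)$. The geometric input is the elementary identity of reduced schemes $Y\times_X Y = \Delta_Y \cup (E\times_C E)$: over $X\setminus C$ the blowdown $p$ is an isomorphism, so $Y\times_X Y$ restricts there to the diagonal, while over $C$ both projections factor through $E$, so $Y\times_X Y$ restricts there to $E\times_C E$. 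Thus $Y\times_X Y$ is the union of two smooth closed subvarieties, one carried isomorphically onto $Y$ by $\Delta_Y$ and the other equal to $E\times_C E$, and the displayed morphism is precisely the pair of closed immersions of these two pieces into $Y\times_X Y$.

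To see the displayed morphism is an epimorphism, I would check that it is locally surjective. Given a smooth compact variety $T$ and a section $s$ of $\rho(Y)\times_{\rho(X)}\rho(Y)$ over $T$, after restricting along a $\tau_B$-cover I may assume $s$ is represented by a morphism $\sigma\colon T \to Y\times_X Y$ of varieties. Then I refine by the decomposition of $T$ into irreducible components: since $T$ is smooth this is a \emph{disjoint} union $T = \coprod_k T_k$ of smooth irreducible varieties, and it is a $\tau_B$-cover since every disjoint-union square is a blowup square in $B$ (as already used in the proof of Lemma \ref{lembounded3}). For each $k$ the image $\sigma(T_k)$ is irreducible, hence contained in one of the two closed subvarieties $\Delta_Y$, $E\times_C E$; as $T_k$ is reduced, $\sigma|_{T_k}$ factors through the corresponding reduced closed immersion, so $s|_{T_k}$ lies in the image of $\rho(Y)$ (through $\Delta_Y$) or of $\rho(E)\times_{\rho(C)}\rho(E)$. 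Since covering sieves are stable under composition, $s$ is locally in the image, establishing (c).

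I expect the only delicate points to be bookkeeping. One must check that the presheaf-level fibre products are represented by the varieties named (using fullness of $\smcomp\subseteq\var$ and that we work with reduced schemes), that the reduced identification $Y\times_X Y = \Delta_Y \cup (E\times_C E)$ and the smoothness and compactness of $E\times_C E$ hold, and that the two covers employed — trivialising a section of a sheafification and splitting a smooth variety into irreducible components — are genuine $\tau_B$-covers whose composite generates a covering sieve. None of this requires resolution of singularities, so the characteristic-zero hypothesis plays no role in this lemma.
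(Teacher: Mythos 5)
Your argument is correct, and it is essentially the paper's: the paper proves this lemma simply by citing the proof of \cite[Lemma 4.5]{voevodskycdh}, and your verification of conditions (a)--(c) — in particular the decomposition $Y\times_X Y=\Delta_Y\cup(E\times_C E)$ into smooth closed pieces and the observation that a connected smooth $T$ must map into one of them — is precisely the content of that cited proof. The one point worth keeping explicit is the reduction to irreducible (= connected) components via disjoint-union blowup squares, which you handle correctly and which the paper also uses elsewhere (Lemma \ref{lembounded3}).
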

\begin{proof}
This follows from the proof of \cite[Lemma 4.5]{voevodskycdh}.
\end{proof}

\begin{lem}\label{lemcregularAL}
The cd-structure $A \cup L$ on $\span$ is c-regular.
\end{lem}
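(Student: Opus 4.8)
The plan is to combine the two regularity results already proved for the pieces $A$ and $L$ and promote them to a statement about the union $A\cup L$, using the general stability lemma for c-regular cd-structures under unions. Recall we have already shown in Lemma \ref{lemregularAAc} that $A$ is c-regular on $\span$, and we will want to first establish that $L$ on $\span$ is c-regular. Then the last recorded lemma of Section \ref{sectcoursetop} (the analogue of Voevodsky's Lemmas 2.6, 2.7, 2.12, 2.13) says that the union of two c-regular cd-structures is c-regular, provided they are cd-structures on the same category, which applies here since both $A$ and $L$ live on $\span$. So the real content is reduced to checking c-regularity of $L$.

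For c-regularity of $L$ on $\span$, I would verify the three conditions in the definition for a localisation square of the form (\ref{localisationsquare}). Condition (a), that the square is Cartesian in $\span$: the pullback of $\emptyset \dashrightarrow U$ along $X \dashrightarrow U$ should recover $X\setminus U$. In the span notation, $X\dashrightarrow U$ is the span $X \xhookleftarrow{X\setminus U \text{-complement}} \dots$; more precisely $X \dashrightarrow U$ is given by $X \xhookleftarrow{U} U \xrightarrow{\id} U$, wait---one should be careful: $X \dashrightarrow U$ is the span $X \xhookleftarrow{U} U \xrightarrow{=} U$ viewed the right way, and $X\setminus U \dashrightarrow X$ is $(X\setminus U) \xhookleftarrow{=} (X\setminus U) \xrightarrow{\iota} X$ (a proper morphism since $X\setminus U$ is closed). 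Composing $X\setminus U \dashrightarrow X \dashrightarrow U$ takes the pullback of $X\setminus U \to X$ along $U \hookrightarrow X$, which is empty, giving the zero-ish composite through $\emptyset$; one checks the universal property directly as in Lemma \ref{lempullbacksinspan}. Condition (b), that $\emptyset \dashrightarrow U$ is a monomorphism in $\span$: since $\emptyset$ is initial, any two morphisms into $\emptyset$ from a fixed object are equal, so precomposition into $\emptyset \dashrightarrow U$ is injective trivially. Condition (c) is the epimorphism-of-sheaves condition; here $B = X\setminus U$, $A = \emptyset$, $Y = X$, and since $\rho(\emptyset) = *$ (the initial object is covered by the empty sieve in $\tau_P$, but for $\tau^c_P$ one uses that $\emptyset$ has only the trivial cover, hence $\rho(\emptyset)$ is terminal, actually---this needs the remark following Lemma \ref{analoogaan3.5} that $F(\emptyset)=F^\sharp(\emptyset)$), the morphism (\ref{epiforregularity}) becomes $\rho(X) \amalg (\rho(X\setminus U)\times_{*}\rho(X\setminus U)) \to \rho(X)\times_{\rho(U)}\rho(X)$, and one checks this is an epimorphism of $\tau^c_{A\cup L}$-sheaves by exhibiting a section locally after pullback along a covering, using that the localisation square itself is a distinguished square and hence $\rho(X) = \rho(\emptyset)\sqcup_{\rho(X\setminus U)}\rho(X)$ by Remark \ref{rmkpushoutrepr}.

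An important subtlety: c-regularity of $L$ must be checked with respect to the c-topology $\tau^c_{A\cup L}$, not $\tau^c_L$ alone, because the definition of c-regular for a cd-structure that is part of a larger cd-structure refers to $\rho$ and epimorphisms in the ambient topology; but actually the definition as stated refers to $\tau^c_P$ for the cd-structure $P$ in question, so strictly I should either check $L$ is c-regular for $\tau^c_L$, or---more to the point---directly check that each distinguished square of $A\cup L$ (whether an abstract blowup or a localisation square) satisfies the three conditions relative to $\tau^c_{A\cup L}$. The cleanest route is the latter: by Lemma \ref{lemregularAAc} the abstract blowup squares already satisfy (a), (b), (c) relative to $\tau^c_A$, and since $\tau^c_A \subseteq \tau^c_{A\cup L}$ epimorphisms are preserved, so (c) still holds in the finer topology; conditions (a), (b) are topology-independent. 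Then it remains only to check the localisation squares satisfy (a), (b), (c) relative to $\tau^c_{A\cup L}$, which is the computation sketched above.

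The main obstacle I anticipate is condition (c) for the localisation squares---specifically, pinning down $\rho(\emptyset)$ in the c-topology and verifying that the displayed map of sheaves is an epimorphism. The pushout description of $\rho(X)$ from Remark \ref{rmkpushoutrepr} applied to the localisation square is the key input: it gives $\rho(X) = \rho(\emptyset) \sqcup_{\rho(X\setminus U)} \rho(X)$, wait that is circular; rather the localisation square being distinguished and $L$ being c-complete (Lemma \ref{lemLccomplete}) already forces, via Proposition \ref{propositionoverexcisionvssheaf}, that sheaves send it to a pullback, and dually (Remark \ref{rmkpushoutrepr}) that $\rho(\emptyset)\sqcup_{\rho(X\setminus U)}\rho(X) \to \rho(U)$ is an isomorphism. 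From this one extracts that $\rho(X) \to \rho(X)\times_{\rho(U)}\rho(X)$ together with the diagonal-type contribution from $\rho(X\setminus U)$ covers the fibre product; the verification that this is an epimorphism of sheaves should then be a diagram chase using that epimorphisms of sheaves are detected locally. I would present this as the bulk of the proof, citing Lemma \ref{lemregularAAc} for the $A$-part and the union-stability lemma to conclude.
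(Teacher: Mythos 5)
Your overall architecture matches the paper's: handle the abstract blowup squares by citing Lemma \ref{lemregularAAc} and noting that further sheafification to the finer topology $\tau^c_{A\cup L}$ preserves the epimorphism in (\ref{epiforregularity}), and then verify conditions (a), (b), (c) directly for the localisation squares. Conditions (a) and (b) are indeed routine. But your treatment of condition (c) for localisation squares --- which is where all the real content of this lemma lives --- is circular. You propose to use Remark \ref{rmkpushoutrepr} (the pushout description $\rho(\emptyset)\sqcup_{\rho(X\setminus U)}\rho(X)\xrightarrow{\sim}\rho(U)$) as ``the key input''; but that remark is derived from the second half of Proposition \ref{propositionoverexcisionvssheaf}, which has c-regularity as its hypothesis. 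You notice the circularity once, then attempt to repair it by appealing to c-completeness of $L$ via Proposition \ref{propositionoverexcisionvssheaf}; this does not work either, because c-completeness only gives the implication ``excisive $\Rightarrow$ sheaf'', whereas the direction you need (``sheaf $\Rightarrow$ sends distinguished squares to pullbacks'') is exactly the consequence of c-regularity that you are trying to establish. After that, the remaining verification is waved off as ``a diagram chase'', which is precisely the step that cannot be done abstractly.

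What the paper does instead is a concrete check at the level of presheaves, before sheafification: it shows that the map $\phi\colon y_X \amalg (y_{X\setminus U}\times y_{X\setminus U}) \to y_X\times_{y_U}y_X$ of (\ref{epiforregulaitylocalisation}) is \emph{locally surjective}, and that this suffices because representable presheaves on $\span$ are $\tau^c_{A\cup L}$-separated (Corollary \ref{corsublem2cregularAL}, which in turn rests on simple covers being jointly surjective, Lemma \ref{sublem2cregularAL}) and locally surjective maps of separated presheaves are epimorphisms of separated presheaves (Lemma \ref{sublem1cregularAL}); the plus-construction then carries this to sheaves. The local surjectivity itself (Lemma \ref{sublem3cregularAL}) needs genuine geometry: one covers $Y$ by its irreducible components, and for irreducible $Y$ one shows that two spans $Y\hookleftarrow V\xrightarrow{p}X$, $Y\hookleftarrow V'\xrightarrow{p'}X$ agreeing on the preimage of $U$ must either both miss $U$ (landing in the $y_{X\setminus U}\times y_{X\setminus U}$ component) or satisfy $V=V'$, $p=p'$, by a density-plus-properness gluing argument. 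None of this is recoverable from the formal pushout/pullback manipulations you propose, so the proof as sketched has a genuine gap at its central step.
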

\begin{proof} 
For an abstract blowup square, by Lemma \ref{lemregularAAc} we know that the morphism of $\tau^c_A$-sheaves (\ref{eq:epi_regularity}) is an epimorphism. Since further sheafifying from $\tau^c_A$-sheaves to $\tau^c_{A \cup L}$-sheaves preserves finite limits and colimits, the corresponding morphism of $\tau^c_{A \cup L}$-sheaves is also an epimorphism. 

For a localisation square of the form (\ref{eq:localisationsquare}), we claim that the morphism 
\begin{equation}\label{epiforregulaitylocalisation}
\phi: y_X \amalg y_{X \setminus U} \times_{y_\emptyset} y_{X \setminus U} \to y_X \times_{y_U}y_X    
\end{equation}
is an epimorphism in the category of $\tau^c_{A \cup L}$-separated presheaves. Since the plus-construction
$(-)^+:\textup{SPSh}(\span;\set) \to \sh(\Span;\set) $
from $\tau^c_{A \cup L}$-separated presheaves to $\tau^c_{A \cup L}$-sheaves commutes with finite limits and colimits, it then follows that the morphism
$$\rho(X) \amalg \rho(X \setminus U) \times_{\rho(\emptyset)} \rho(X \setminus U) \to \rho(X) \times_{\rho(U)}\rho(X) $$
is an epimorphism in of $\tau^c_{A \cup L}$-sheaves. We defer the proof of the claim to Lemma \ref{sublem3cregularAL}.
\end{proof}

For $\X$ a site, we call a morphism $f:F \to G$ in $\psh(\X;\set)$ \textit{locally surjective} if for every $X$ in $\X$, there exists a cover $\{U_i \to X \}$ such that $f_{U_i}:F(U_i) \to G(U_i)$ is a surjection for all $i$. 
\begin{lem}\label{sublem1cregularAL}
Let $\X$ be a site with Grothendieck topology $\tau$ and $f:F \to G$ a locally surjective morphism in $\psh(\X;\set)$ of $\tau$-separated presheaves. Then $f$ is an epimorphism in the category of $\tau$-separated presheaves $\textup{SPSh}(\X;\set)$. 
\end{lem}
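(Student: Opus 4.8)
The plan is to unwind the definition of epimorphism in $\textup{SPSh}(\X;\set)$: given a $\tau$-separated presheaf $H$ and two morphisms $g,h\colon G\to H$ with $g\circ f = h\circ f$, I want to conclude $g=h$, and for this it suffices to check that $g_X(s)=h_X(s)$ for every object $X$ of $\X$ and every section $s\in G(X)$.

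The key step is to exploit local surjectivity. By hypothesis there is a covering family $\{U_i\to X\}$ of $X$ such that each $f_{U_i}\colon F(U_i)\to G(U_i)$ is surjective; choose $t_i\in F(U_i)$ with $f_{U_i}(t_i)=s|_{U_i}$. Then
$$ g_{U_i}(s|_{U_i}) = g_{U_i}(f_{U_i}(t_i)) = (g\circ f)_{U_i}(t_i) = (h\circ f)_{U_i}(t_i) = h_{U_i}(f_{U_i}(t_i)) = h_{U_i}(s|_{U_i}). $$
Hence $g_X(s)$ and $h_X(s)$ are two elements of $H(X)$ whose restrictions to every member of the covering family $\{U_i\to X\}$ agree. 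Since $H$ is $\tau$-separated, the restriction map $H(X)\to\prod_i H(U_i)$ is injective, so $g_X(s)=h_X(s)$, which is what we wanted.

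There is essentially no hard step here; this is just the $\textup{SPSh}$-level version of the classical fact that a locally surjective morphism of presheaves induces an epimorphism of associated sheaves. The only point worth a word of care is the interplay between covering families and covering sieves: local surjectivity is stated in terms of a covering family $\{U_i\to X\}$, while $\tau$-separatedness of $H$ is stated for the generated covering sieve $R=\langle U_i\to X\rangle$, but injectivity of $H(X)\to\prod_{(V\to X)\in R}H(V)$ restricts to injectivity of $H(X)\to\prod_i H(U_i)$, which is all that is used.
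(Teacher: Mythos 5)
Your argument is correct and is essentially identical to the paper's own proof: both pick preimages of $s|_{U_i}$ on a cover where $f$ is surjective, use naturality to see $g$ and $h$ agree on the cover, and then invoke separatedness of the target $H$. Your closing remark about covering families versus generated sieves is a fine point of care but not a substantive difference.
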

\begin{proof}
Let $f:F \to G$ be a locally surjective morphism of separated presheaves on $\X$, and let $g,g':G \to H$ be morphisms of separated presheaves such that $gf=g'f$. For $X$ an object of $\X$ and $x\in G(X)$, we know that there is a cover $\mathcal{U}=\{p_i:U_i \rightarrow X \}$ and elements $x_i \in F(U_i)$ such that $G(p_i)(x)=f(x_i)$. Then $H(p_i)(g(x)) = gG(p_i)(x) = gf(x_i)$, and on the other hand $H(p_i)(g'(x)) = g'G(p_i)(x)= g'f(x_i)$. So $g(x)$ and $g'(x)$ coincide on the cover $\mathcal{U}$, which implies $g(x) =g'(x)$ since $H$ is separated. 
\end{proof}

We define a family of spans $\{Y_i \dashrightarrow_{U_i}  X \}$ to be jointly surjective if the family of proper morphisms of varieties $\{U_i \to X \}$ is jointly surjective.

\begin{lem}\label{sublem2cregularAL}
Simple $\tau^c_{A \cup L}$-covers are jointly surjective.
\end{lem}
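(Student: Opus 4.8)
The plan is to argue by induction on the inductive construction of the class $S_{A \cup L}$ of simple $(A\cup L)$-covers. For the base case I would take a single isomorphism $f \colon X' \to X$ in $\span$, written as a span $X' \hookleftarrow W \xrightarrow{g} X$; a short check with the composition rule in $\span$, using that $f$ has a two-sided inverse, shows that both legs $W \to X'$ and $g \colon W \to X$ are isomorphisms of varieties, so in particular $g$ is surjective and the one-element family $\{f\}$ is jointly surjective.

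For the inductive step I would start from two jointly surjective simple covers $\{Y_\alpha \xdashrightarrow{W_\alpha} Y\}$ and $\{A_\beta \xdashrightarrow{W'_\beta} A\}$ together with a distinguished square of $A \cup L$ of the form (\ref{distsquare}) with bottom-right corner $X$ and legs $p \colon Y \dashrightarrow X$, $i \colon A \dashrightarrow X$, and must show that the family $\{Y_\alpha \xdashrightarrow{W_\alpha} Y \xdashrightarrow{p} X\} \cup \{A_\beta \xdashrightarrow{W'_\beta} A \xdashrightarrow{i} X\}$ is jointly surjective. Two observations do the work. First, the two-element family $\{p, i\}$ is itself jointly surjective in $\span$: for an abstract blowup square (\ref{absblowupsquare}) the underlying proper maps are $p \colon Y \to X$ and the closed immersion $C \hookrightarrow X$, and since $p$ restricts to an isomorphism over $X \setminus C$ their images cover $X$; for a localisation square (\ref{localisationsquare}) with corner $U$, the leg $p \colon X \dashrightarrow U$ is the span $X \hookleftarrow U \xrightarrow{=} U$, whose proper leg is already surjective. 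Second, post-composing a jointly surjective family with a fixed span recovers the whole target of that span: if $q \colon Y \dashrightarrow X$ is the span $Y \hookleftarrow V \xrightarrow{v} X$, then by the composition rule the composite $Y_\alpha \xdashrightarrow{W_\alpha} Y \xdashrightarrow{q} X$ has underlying proper map $W_\alpha \times_Y V \to V \xrightarrow{v} X$, whose image is $v(V \cap w_\alpha(W_\alpha))$, where $w_\alpha \colon W_\alpha \to Y$ is the proper leg; taking the union over $\alpha$ and using $\bigcup_\alpha w_\alpha(W_\alpha) = Y \supseteq V$ gives $v(V)$, the image of $q$. Applying this to $p$ and to $i$ and combining with the first observation, the union of the images of the underlying proper maps of the new family is $\operatorname{im}(p) \cup \operatorname{im}(i) = X$, which completes the induction.

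The argument is essentially bookkeeping, so the hard part will not be conceptual but simply keeping the composition rule in $\span$ and the resulting set-theoretic images straight, together with pinning down what an isomorphism in $\span$ looks like for the base case. Once that is done, the only geometric inputs are the two elementary facts used above: that an abstract blowup is an isomorphism away from its centre, and that the open-immersion leg of the $p$-map of a localisation square is an identity.
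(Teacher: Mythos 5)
Your proof is correct and follows essentially the same route as the paper: induction over the generation of simple covers, using that the two legs of each distinguished square (abstract blowup or localisation) have jointly surjective underlying proper maps, and that post-composition along the composition rule in $\span$ preserves joint surjectivity. You are somewhat more explicit than the paper about the image computation $v(V\cap w_\alpha(W_\alpha))$ and about identifying isomorphisms in $\span$ for the base case, but the substance is identical.
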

\begin{proof}
We show this by induction. It is clear that an isomorphism is a jointly surjective cover. Suppose we have an abstract blowup square (\ref{eq:absblowupsquare}) and jointly surjective covers $\{ X_i \dashrightarrow_{U_i} C\}$ and $\{Z_j \dashrightarrow_{V_j}  Y\} $. Since $\{C \to X, Y \to X\}$ is jointly surjective, it follows that the simple cover
$$\{X_i \dashrightarrow_{U_i} X, Z_j\dashrightarrow_{V_j} X \} $$
is jointly surjective. Now assume we have a localisation square of the form (\ref{eq:localisationsquare}), and jointly surjective covers $\{ X_i \dashrightarrow \emptyset\}$ and $\{Z_j \dashrightarrow_{V_j} X\} $. Since the $V_j \to X$ are jointly surjective onto $X$, it follows that the restrictions to $V_j \times_X U$ are jointly surjective onto $U$, showing that 
$$\{X_i \dashrightarrow_{\emptyset} U , Z_j \dashrightarrow_{V_j\times_X U} U \} $$
is jointly surjective. This completes the induction.
\end{proof}
\begin{cor}\label{corsublem2cregularAL}
Representable presheaves on $\span$ are $\tau^c_{A\cup L}$-separated.
\end{cor}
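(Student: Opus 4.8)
The plan is to prove that each representable presheaf $y_W=\hom_\span(-,W)$ is $\tau^c_{A\cup L}$-separated, i.e.\ that for every object $X$ of $\span$ and every $\tau^c_{A\cup L}$-covering sieve $R$ on $X$ the map $y_W(X)\to\prod_{f\in R}y_W(\textup{dom}(f))$ is injective. The first step is to reduce this to simple covers: since $A\cup L$ is c-complete (by the c-completeness of $A$ and of $L$ on $\span$ together with the lemma that a union of c-complete cd-structures is c-complete), every covering sieve contains a simple $\tau^c_{A\cup L}$-cover $\{g_i\colon Y_i\dashrightarrow X\}_{i\in I}$, and two sections that agree on all of $R$ agree in particular on the $g_i$; so it suffices to show that such a simple cover is an epimorphic family, meaning that any two morphisms $g,g'\colon X\dashrightarrow W$ with $g\circ g_i=g'\circ g_i$ for all $i$ coincide. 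By Lemma \ref{sublem2cregularAL} the family $\{g_i\}$ is jointly surjective, so this reduces the statement to the purely geometric claim that a jointly surjective family in $\span$ is an epimorphic family.

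Next I would unwind the composition of spans. Writing $g_i=(Y_i\hookleftarrow U_i\xrightarrow{r_i}X)$ with $\{r_i\colon U_i\to X\}$ jointly surjective, and $g=(X\hookleftarrow A\xrightarrow{p}W)$, $g'=(X\hookleftarrow A'\xrightarrow{p'}W)$, the composition rule in $\span$ identifies $g\circ g_i$ with the span having total space the open subvariety $r_i^{-1}(A)\subseteq U_i\subseteq Y_i$ and structure maps $r_i^{-1}(A)\hookrightarrow Y_i$, $p\circ r_i\colon r_i^{-1}(A)\to W$, and similarly $g'\circ g_i$ with $A$ replaced by $A'$. Equality of these two spans therefore forces $r_i^{-1}(A)=r_i^{-1}(A')$ as open subvarieties of $Y_i$ and agreement of $p\circ r_i$ with $p'\circ r_i$ on this open subvariety, for every $i$. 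Using joint surjectivity of the $r_i$ one then gets $A=A'$ as open subvarieties of $X$: a point of $A$ lifts along some $r_i$ into $r_i^{-1}(A)=r_i^{-1}(A')$, hence lies in $A'$, and symmetrically; here one invokes the standard fact that an open subvariety is determined by its underlying open subset. This reduces the problem to proving $p=p'\colon A\to W$.

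For the final equality I would argue as follows: the restrictions $r_i|\colon r_i^{-1}(A)\to A$ again form a jointly surjective family, and $p\circ(r_i|)=p'\circ(r_i|)$ for all $i$, so each $r_i|$ factors through the equaliser $E$ of $p$ and $p'$; since varieties are separated, $E\hookrightarrow A$ is a closed immersion, and since the images of the $r_i|$ cover $A$ this closed subset is all of $A$ set-theoretically, hence $E=A$ because $A$ is reduced. Thus $p=p'$, so $g=g'$. I do not expect a genuine obstacle; the only points needing care are the exact form of composition in $\span$ (in particular that the total space of $g\circ g_i$ is literally $r_i^{-1}(A)$ as an open subscheme of $Y_i$) and the two standard scheme-theoretic inputs — separatedness, used to make the equaliser closed, and reducedness, used to upgrade a set-theoretic equality of closed subschemes to a scheme-theoretic one. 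As a byproduct the argument also shows that jointly surjective families detect equality of morphisms in $\span$, which is exactly the epimorphic-family property invoked above.
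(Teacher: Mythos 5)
Your proposal is correct and follows essentially the same route as the paper: reduce to a simple cover via c-completeness, invoke Lemma \ref{sublem2cregularAL} for joint surjectivity, deduce $U=U'$ pointwise, and then conclude $p=p'$. The only difference is cosmetic — the paper concludes $p=p'$ directly from pointwise agreement, whereas you justify this more carefully via the closed equaliser and reducedness, which is a harmless (indeed slightly more rigorous) elaboration of the same step.
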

\begin{proof} 
Let $X$ and $Y$ be in $\span$, and let 
$$Y \hookleftarrow U \xrightarrow{p} X \textup{ and } Y \hookleftarrow U' \xrightarrow{p'}X$$ be spans in in $y_X(Y)$ that agree on a $\tau^c_{A \cup L}$-cover. Since $A \cup L$ is c-complete, we may assume that the sections agree on a simple cover $\{Z_i \hookleftarrow U_i \xrightarrow{p_i} Y\}$, which is jointly surjective by Lemma \ref{sublem2cregularAL}. So for all $i$
we have that 
$$Z_i \hookleftarrow  p_i^{-1}(U) \to X \textup{ and }Z_i \hookleftarrow p_i^{-1}(U') \to X $$
coincide, in particular $p_i^{-1}(U) = p_i^{-1}(U')$. Let $x \in U \subset A$; then there is $i$ and $y \in U_i$ such that $p_i(y) = x$. So $y$ is in $p_i^{-1}(U) = p_i^{-1}(U')$ and therefore $x\in U'$. By symmetry we have $U = U'$. Moreover $p(x) = p(p_i(y)) = p'(p_i(y)) = p'(x)$. This shows that the spans in $y_X(Y)$ are equal.
\end{proof}

\begin{lem}\label{sublem3cregularAL}
For a localisation square of the form (\ref{eq:localisationsquare}), the morphism $\phi$ in (\ref{epiforregulaitylocalisation}) is an epimorphism of separated presheaves.
\end{lem}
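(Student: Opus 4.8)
\section*{Proof proposal}

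The plan is to deduce the statement from Lemma \ref{sublem1cregularAL} by showing that $\phi$ is locally surjective with respect to $\tau^c_{A\cup L}$. Both the source $y_X \sqcup y_{X\setminus U}\times_{y_\emptyset} y_{X\setminus U}$ and the target $y_X\times_{y_U} y_X$ of $\phi$ are $\tau^c_{A\cup L}$-separated presheaves: representables are separated by Corollary \ref{corsublem2cregularAL}, and separatedness is preserved under fibre products, products, and — since in the c-topology no object is covered by the empty sieve — under coproducts of representables (here one uses that $y_\emptyset$ is the terminal presheaf, any proper morphism with target $\emptyset$ forcing its source to be $\emptyset$, so that $y_{X\setminus U}\times_{y_\emptyset} y_{X\setminus U}=y_{X\setminus U}\times y_{X\setminus U}$). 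So it remains to produce, for every $W$ in $\span$ and every section $(s_0,s_1)$ of $y_X\times_{y_U} y_X$ over $W$, a $\tau^c_{A\cup L}$-cover $\{W_k\dashrightarrow W\}$ on each member of which the restriction of $(s_0,s_1)$ lies in the image of $\phi$.

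Write $s_i=(W\xhookleftarrow{} T_i\xrightarrow{q_i}X)$, put $T':=q_0^{-1}(U)=q_1^{-1}(U)$, and let $q\colon T'\to U$ be the common restriction $q_0|_{T'}=q_1|_{T'}$; note $q$ is proper, being the base change of the proper $q_i$ along $U\hookrightarrow X$. For a piece $W_k\xdashrightarrow{V_k}W$ with structure map $p_k\colon V_k\to W$, the restriction of $(s_0,s_1)$ lands in the image of $\phi$ as soon as \emph{either} $p_k(V_k)\cap T'=\emptyset$ — then both restricted spans factor through $X\setminus U\hookrightarrow X$ and the pair comes from the second summand — \emph{or} $p_k^{-1}(T_0)=p_k^{-1}(T_1)$ with $q_0p_k=q_1p_k$ there, in which case the pair is the diagonal image of a single span. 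Since $\{W^{(j)}\dashrightarrow W\}$ over the irreducible components of $W$ is a $\tau^c_A$-cover (as in the proof of Lemma \ref{lembounded1}), we may assume $W$ irreducible; and if $T'=\emptyset$ then $q_i(T_i)\subseteq X\setminus U$ for both $i$, so $(s_0,s_1)$ already lies in the image of $\phi_W$. Hence we may assume $W$ irreducible and $T'$ a nonempty, therefore dense, open subvariety. Then $T_0\cap T_1$ is dense in $W$, and as $q_0,q_1$ agree on the dense reduced subscheme $T'$ and $X$ is separated, $q_0$ and $q_1$ agree on all of $T_0\cap T_1$; so it is enough to find a cover of $W$ whose members each avoid $T'$ or map into $T_0\cap T_1$. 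The closed-immersion span $(W\setminus T')\dashrightarrow W$ disposes of the complement of $T'$.

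The remaining, and technically decisive, step is to build a $\tau^c_{A\cup L}$-cover of $W$ refining $W\setminus T'$ whose other members have image inside $T_0\cap T_1$. Here one combines the two square types of the cd-structure $A\cup L$: the abstract blowup of $W$ along the closed set $(W\setminus T_0)\cup(W\setminus T_1)$, which is disjoint from $T'$ and whose complement is exactly $T_0\cap T_1$, together with localisation squares produced from Nagata compactifications of $q_0$ and $q_1$ relative to $X$ glued to $W$ along $T'$ (and, over the closed set, along $T_0\cap T_1$), in the manner of the proof of Lemma \ref{lemLccomplete}. Carrying this out yields a simple cover each of whose members satisfies one of the two conditions displayed above; the main obstacle, and the part requiring real care, is to arrange the compactifications and gluings so that all the structure maps are genuinely proper and the resulting family is jointly surjective, so that it is indeed a $\tau^c_{A\cup L}$-cover. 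Once this is done, local surjectivity of $\phi$ follows, and with it the lemma.
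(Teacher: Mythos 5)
Your setup is correct and matches the paper's strategy: reduce to local surjectivity via Lemma \ref{sublem1cregularAL}, pass to irreducible components, dispose of the case $T'=\emptyset$ via the summand $y_{X\setminus U}\times y_{X\setminus U}$, and observe that when $T'\neq\emptyset$ the maps $q_0,q_1$ agree on $T_0\cap T_1$ by density of $T'$ and separatedness of $X$. But the decisive case is not actually proved. You stop at ``$q_0=q_1$ on $T_0\cap T_1$'' and then propose to manufacture a further cover of $W$ (via an abstract blowup along $(W\setminus T_0)\cup(W\setminus T_1)$ together with localisation squares built from Nagata compactifications and gluings), while explicitly conceding that arranging properness and joint surjectivity of that family is ``the main obstacle'' and is left undone. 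As it stands this is a gap: the section $(s_0,s_1)$ over an irreducible $W$ with $T'\neq\emptyset$ has not been exhibited in the image of $\phi$ on any cover.

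The missing step is in fact much shorter than the construction you sketch, and it shows that no further covering is needed at all: over irreducible $W$ the two spans are \emph{equal}. Since $q_0$ and $q_1$ agree on $T_0\cap T_1$, they glue to a morphism $q''\colon T_0\cup T_1\to X$. The composite $T_0\hookrightarrow T_0\cup T_1\xrightarrow{q''}X$ is the proper morphism $q_0$, so (as $q''$ is separated) the open immersion $T_0\hookrightarrow T_0\cup T_1$ is proper, hence has open and closed image; since $T_0\cup T_1$ is a nonempty open subset of the irreducible $W$ it is connected, so $T_0=T_0\cup T_1$, and by symmetry $T_0=T_1$ and $q_0=q_1$. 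Thus $(s_0,s_1)$ is the diagonal image of a single element of $y_X(W)$, and $\phi$ is surjective on sections over each irreducible component. This is exactly how the paper concludes; your covering construction, besides being incomplete, is unnecessary. (Your side remarks are fine: $y_\emptyset$ is indeed terminal in $\psh(\span;\set)$, so the fibre product over it is an honest product, and separatedness of the coproduct follows because the c-topology has no empty covers.)
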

\begin{proof}
By Corollary \ref{corsublem2cregularAL}, we know that $\phi$ is in $\textup{SPSh}(\span;\set)$. To show that it is an epimorphism of separated presheaves, by Lemma \ref{sublem1cregularAL} it now suffices to show that $\phi$ is locally surjective. Let $Y$ be in $\span$. As we have seen in the proof of Lemma \ref{lem:bounded1}, if $Y_0,\dots Y_k$ are the irreducible components of $Y$ then $\{Y_i \to Y \}_{i=0}^k $ is a $\tau^c_{A\cup L}$-cover of $Y$. So we assume that $Y$ is irreducible, and show that $\phi$ evaluated on $Y$ is a surjection of sets. 

An element of $(y_X \times_{y_U} y_X)(Y)$ is a pair of spans $$Y \hookleftarrow V \xrightarrow{p} X \textup{ and } Y \hookleftarrow V' \xrightarrow{p'} X$$ such that $p^{-1}(U) = (p')^{-1}(U)$, and the restrictions of $p$ and $p'$ to this set coincide. One possibility is that $p^{-1}(U)$ and $(p')^{-1}(U)$ are empty; in that case $(p,p')$ is the image of an element of $y_{X \setminus U} \times_{Y_\emptyset}y_{X \setminus U} (Y) = y_{X \setminus U}(Y) \times y_{X \setminus U}(Y)$. If on the other hand $p^{-1}(U) = (p')^{-1}(U)=:V''$ is non-empty, then it is a dense subset of $V \cap V'$.
Let $V'''$ be the largest closed subset of $V \cap V'$ where $p$ and $p'$ coincide; then it contains $V''$ and therefore its closure, which is $V \cap V'$. So $p$ and $p'$ coincide on $V\cap V'$ and we can glue them to one morphism $p'':V \cup V' \to Y$. Composing $p''$ with $V \to V \cup V'$ gives the proper morphism $p:V \rightarrow Y$, so it follows that $V \to V \cup V'$ is a proper morphism. It is also an open immersion whose image is dense (since $V \cup V'$ is open in $Y$ and therefore irreducible), so it must be the identity. It follows that $V = V'$ and $p = p'$, so $(p,p')$ is in the image of $y_X(Y)$ under the diagonal. This shows that $\phi$ is locally surjective.
\end{proof}

\section{Main result}\label{sectmain}
There are inclusion of categories
\begin{equation}\label{eqmorphs}
  \smcomp \to \comp \to \comp_0 \to \span   
\end{equation}
where the morphism $\comp_0 \to \span$ is given by sending a zero morphism $0:X \to Y$ to the span $X \dashrightarrow \emptyset \to Y$. On these categories we consider the cd-structures $B$, $ A^\textup{comp}$, $A^\textup{comp}$ and $A \cup L$ respectively, as given in the table in Figure \ref{tabel}.
In this section we prove the following theorem.
\begin{thm}\label{thmmain}
Let $\mathcal{C}$ be a pointed complete and cocomplete $\infty$-category. Over a field of characteristic zero,  restriction of hypersheaves induces equivalences of $\infty$-categories
$$\hshv(\span;\C)_{\emptyset} \xrightarrow{\sim} \hsh(\comp_0;\C)_\emptyset \xrightarrow{\sim} \hshv(\comp;\C) \xrightarrow{\sim}\hsh(\smcomp;\C).$$
\end{thm}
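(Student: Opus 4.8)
The plan is to establish the three equivalences in the chain (\ref{eqmorphs}) one at a time, attaching to each inclusion a comparison that is visible either already at the level of set-valued sheaves or at the level of $\C$-valued presheaves, and then importing the machinery of Sections~\ref{sectdim}--\ref{sechyper} (the descriptions of hypersheaf categories I use are stated there for complete $\C$, which suffices here). Concretely: $\smcomp\hookrightarrow\comp$ and $\comp_0\hookrightarrow\span$ will be shown to induce equivalences of set-valued sheaf categories, and $\comp\hookrightarrow\comp_0$ an equivalence of pointed presheaf categories.

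For $\hsh(\comp;\C)\xrightarrow{\sim}\hsh(\smcomp;\C)$ I would first show that $\smcomp\hookrightarrow\comp$ induces an equivalence $\shv(\comp;\set)\xrightarrow{\sim}\shv(\smcomp;\set)$ for the topologies $\tau_{A^{\comp}}$ and $\tau_B$, and then upgrade to $\C$-valued hypersheaves by Corollary~\ref{corhypersheafsarbcat}. The set-level statement is an instance of the comparison lemma: by resolution of singularities in characteristic zero, every compact variety $X$ sits in an abstract blowup square of the form (\ref{absblowupsquare}) with upper-right corner a smooth compactification of $X$ and lower-left corner of strictly smaller dimension, so iterating produces a $\tau_{A^{\comp}}$-cover of $X$ by smooth compact varieties, making $\smcomp$ dense in $\comp$; and the induced topology on $\smcomp$ is $\tau_B$ by Voevodsky's analysis of the (proper) cdh-topology in \cite{voevodskycdh}. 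Equivalently, by Proposition~\ref{propositionoverexcisionvssheaf} both sides are categories of presheaves sending the respective distinguished squares to pullbacks, and one checks that restriction and right Kan extension along $\smcomp^\op\to\comp^\op$ are mutually inverse, again using resolution of singularities.

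For $\hsh(\comp_0;\C)_\emptyset\xrightarrow{\sim}\hsh(\comp;\C)$ I would invoke Theorem~\ref{thmpresheaveforaddedzeromorph}, which already gives an equivalence $\psh(\comp_0;\C)_\emptyset\xrightarrow{\sim}\psh(\comp;\C)_\emptyset$ of presheaf categories, and then check it restricts to the hypersheaf subcategories. The cd-structure $A^{\comp}$ consists of literally the same diagrams on $\comp$ and on $\comp_0$ (abstract blowup squares do not involve the added zero morphisms, and remain Cartesian in $\comp_0$), so by Proposition~\ref{prophypersheaveszijnexcisive} the hypersheaves on both sides are exactly the presheaves sending those squares to pullback squares with $F(\emptyset)$ a zero object -- a condition built into $\psh(-)_\emptyset$ that matches the ``terminal object'' clause of excisiveness because $\C$ is pointed. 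Hence the presheaf equivalence carries $\hsh(\comp_0;\C)_\emptyset$ onto $\hsh(\comp;\C)$ (which equals $\hsh(\comp;\C)_\emptyset$ automatically, since $\tau_{A^{\comp}}$ on $\comp$ is regular).

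The remaining equivalence $\hsh(\span;\C)_\emptyset\xrightarrow{\sim}\hsh(\comp_0;\C)_\emptyset$ is the main point and, I expect, the hardest step. Again I would argue on set-valued sheaves, showing that $\comp_0\hookrightarrow\span$ induces an equivalence $\shv(\span;\set)\xrightarrow{\sim}\shv(\comp_0;\set)$ for $\tau^c_{A\cup L}$ and $\tau^c_{A^{\comp}}$ and then applying the c-version of Corollary~\ref{corhypersheafsarbcat}; the $(-)_\emptyset$ subcategories correspond automatically because restriction, hence also its inverse, preserves the value at $\emptyset$. The set-level statement rests on three inputs: (i) Nagata's compactification theorem, so that for any variety $V$ with compactification $\bar V$ the localisation square (\ref{localisationsquare}) exhibits $V$ as $\tau^c_{A\cup L}$-covered by the compact varieties $\bar V$ and $\emptyset$; (ii) Remark~\ref{rmkpushoutrepr}, which turns this into $\rho(V)\cong\rho(\emptyset)\sqcup_{\rho(\bar V\setminus V)}\rho(\bar V)$, so that $\shv(\span;\set)$ is generated under colimits by representables of compact varieties and the terminal sheaf $\rho(\emptyset)$; and (iii) control of morphisms. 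The genuinely delicate point is (iii): $\comp_0\hookrightarrow\span$ is \emph{not} full -- a span $X\hookleftarrow U\xrightarrow{p}Y$ with $U$ noncompact is not a morphism of $\comp_0$ -- so the ordinary comparison lemma does not apply (and indeed fails for $\comp\hookrightarrow\span$, where $\emptyset$ is even a terminal object). The role of the intermediate category $\comp_0$ is precisely to add the missing morphisms $X\to\emptyset$, which in $\span$ are the spans $X\dashrightarrow\emptyset\to Y$; one then uses Nagata to factor $p$ through a compact variety and glues, verifying that $\comp_0\hookrightarrow\span$ is a weakly dense morphism of sites in the sense of \cite{denseness}, with induced topology $\tau^c_{A^{\comp}}$ on $\comp_0$. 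Composing the three equivalences yields the statement, and it is the bookkeeping of the $F(\emptyset)$-conditions through these steps that produces the $(-)_\emptyset$ decorations appearing only on the $\comp_0$ and $\span$ ends of the chain.
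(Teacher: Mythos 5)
Your overall decomposition is exactly the paper's: the outer two equivalences are established at the level of set-valued sheaves via the Comparison Lemma and then upgraded by Corollary \ref{corhypersheafsarbcat}, the middle one comes from Theorem \ref{thmpresheaveforaddedzeromorph} together with Proposition \ref{prophypersheaveszijnexcisive}, and your treatments of $\smcomp\to\comp$ and $\comp\to\comp_0$ match the paper's Lemmas \ref{lemmamainresult1} and \ref{lemmamainresult2}.

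The gap is in the step $\comp_0\to\span$, specifically in your account of local fullness. You write that fullness fails because a span $X\hookleftarrow U\xrightarrow{p}Y$ between compact varieties may have $U$ noncompact; but such spans do not exist: $p$ is proper and $Y$ is proper over $k$, so $U$ is proper over $k$, i.e.\ compact. The genuine failure of fullness is that $U$ may be a nontrivial clopen subvariety of $X$ (neither $\emptyset$ nor all of $X$), and such a span is not a morphism of $\comp_0$. The observation that rescues local fullness --- and it is the crux of why $\comp_0\to\span$ works at all --- is precisely this automatic compactness of $U$: being compact and open in $X$ it is clopen, so $X=U\amalg(X\setminus U)$ gives an abstract blowup cover in $\comp_0$, and restricted to this cover the span becomes the pair of $\comp_0$-morphisms $p:U\to Y$ and $0:X\setminus U\to Y$. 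Your proposal never produces this argument; the ``Nagata plus gluing'' step you invoke at this point is the proof of a different condition (it is how the paper establishes local surjectivity on objects and the c-completeness of $L$, and a variant reappears in the co-continuity check), not of local fullness. Relatedly, you do not address co-continuity --- that covers in $\span$ of compact varieties restrict to covers in $\comp_0$ --- which is the longest part of the paper's Lemma \ref{lemmamainresult3} and requires an induction over simple covers for $A\cup L''$, using the refinement of localisation squares by ones with compact total space from Remark \ref{rmkcompactelocalisation}. Finally, a terminological correction: the paper does apply the Comparison Lemma of \cite{comparison} to $\comp_0\to\span$; that lemma demands only local fullness and local faithfulness, not fullness, so the issue is not that ``the ordinary comparison lemma does not apply'' but that its hypotheses must actually be verified, and that verification is where the content of this step lies.
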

By Proposition \ref{prop:C_valued_hypersheaves_excisive}, for all these sites the category of $\C$-valued hypersheaves coincides with the categories of (c\nobreakdash-)excisive $\C$-valued presheaves. Together with Proposition \ref{propsimplerstatement}, this implies Theorem \ref{thm:main_intro}.

\begin{rmk}
In light of Remark \ref{rmk:hypercompleteness}, in Theorem \ref{thmmain} the category $\hsh(\comp;\C)$ can be replaced by $\sh(\comp;\C)$. For the other sites, we currently do not know if they are hypercomplete.
\end{rmk}

\begin{proof}[Proof of Theorem \ref{thmmain}] For the third and the first equivalence, by Corollary \ref{cor:category_hypersheaves} it suffices to show the equivalence for the corresponding categories of set-valued sheaves. These equivalences are proven in Lemma \ref{lemmamainresult1} and Lemma \ref{lemmamainresult3}. The equivalence in the middle is Lemma \ref{lemmamainresult2}.
\end{proof}

\begin{lem}[{\cite[Lemma 4.7]{voevodskycdh}}]\label{lemmamainresult1}
Over a field of characteristic zero, restriction along the inclusion $\smcomp \to \comp$ induces an equivalence of categories
$$\shv(\comp,\set) \xrightarrow{\sim} \shv(\smcomp;\set).$$ 
\end{lem}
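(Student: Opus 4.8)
The plan is to exhibit the inclusion $j : \smcomp \to \comp$ as a \emph{weakly dense} morphism of sites in the sense of \cite{denseness} (or to verify Voevodsky's covering-type hypotheses of \cite[Lemma 4.7]{voevodskycdh} directly), which by the comparison lemma for sites will give the claimed equivalence $\shv(\comp;\set) \xrightarrow{\sim} \shv(\smcomp;\set)$ by restriction. Concretely, $j^*$ is always well-defined and limit-preserving, so the content is to produce an inverse. The key geometric input, and the only place characteristic zero is used, is resolution of singularities: for every compact variety $X$ there is a proper birational morphism $\widetilde{X} \to X$ with $\widetilde{X}$ smooth and compact, fitting into an abstract blowup square with a closed subvariety $C \subsetneq X$ of strictly smaller dimension whose preimage $E$ is again compact, and with $E$ and $C$ admitting resolutions of smaller dimension. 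One then argues by induction on $\dim X$ that every object of $\comp$ is covered, in the $A^{\mathrm{comp}}$-topology, by (iterated distinguished squares built from) smooth compact varieties.

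The steps, in order, would be: (1) record that $j$ is continuous and cocontinuous for the two cd-topologies, so $j^* : \psh(\comp;\set) \to \psh(\smcomp;\set)$ restricts to sheaves and has a left adjoint $j_!$; (2) show $j$ is \emph{cover-dense}: every $X \in \comp$ admits an $A^{\mathrm{comp}}$-cover by a simple cover all of whose sources are smooth and compact — this is the induction on dimension using resolution of singularities and Lemma \ref{lemdimension} to control dimensions of $C$, $E$, and the strict transform, exactly as in the proof of Lemma \ref{lembounded1}; (3) show $j$ is \emph{fully faithful on covers} in the sense needed for the comparison lemma, i.e.\ that for smooth compact $Y, Y'$ the $\comp$-morphisms and $\smcomp$-morphisms agree and that $j$ reflects covers — immediate since $\smcomp$ is a full subcategory and the cd-structure $B$ on $\smcomp$ is the restriction of $A^{\mathrm{comp}}$; (4) invoke the comparison lemma (e.g.\ \cite[III.4.1]{sga4} or the weak-denseness criterion of \cite{denseness}) to conclude $j^*$ is an equivalence on set-valued sheaves. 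Alternatively, one can package (2)--(3) by checking that the two topoi have the same set of "points" given by smooth compact varieties, but the comparison-lemma route is cleaner.

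The main obstacle is step (2): one must organise the resolution-of-singularities induction so that the distinguished squares produced genuinely lie in $A^{\mathrm{comp}}$ (all four corners \emph{compact}) and so that the dimension bookkeeping closes the induction. The subtlety is that a resolution $\widetilde X \to X$ need not itself be a blowup square of the restricted form $B$; one resolves this by first reducing to irreducible $X$ (via the simple cover by irreducible components, as in Lemma \ref{lembounded1}), then using that the resolution sits in an abstract blowup square over a proper closed $C \subsetneq X$, taking closures of strict transforms so that Lemma \ref{lemdimension} applies, and recursing on $C$ and $E$, both of strictly smaller dimension. Once the covering statement is in hand, everything else is formal, and indeed the argument is essentially that of \cite[Lemma 4.7]{voevodskycdh}, to which we may appeal.
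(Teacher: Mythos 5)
Your proposal is correct and follows essentially the same route as the paper: the paper's proof simply observes that characteristic zero gives resolution of singularities and that this lets the inclusion $\smcomp \to \comp$ satisfy the hypotheses of the Comparison Lemma, deferring the covering induction to the proof of Voevodsky's Lemma 4.7 — exactly the argument you outline and to which you also appeal. The extra detail you supply (reduction to irreducible components, dimension bookkeeping via the analogue of Lemma \ref{lembounded1}) is consistent with what that cited proof does.
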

\begin{proof}
Any field of characteristic zero admits resolution of singularities in the sense of \cite[Definition 3.4]{friedlander_voevodsky}, and this implies that the inclusion $i:\smcompvar \to \compvar$ satisfies the conditions of the Comparison Lemma in \cite{comparison} (see also the proof of \cite[Lemma 4.7]{voevodskycdh}).  We conclude that $i$ induces an equivalence between categories of sheaves. 
\end{proof}

\begin{rmk}\label{rmkoverabdostelling}
In fact the proof of Lemma 4.7 in \cite{voevodskycdh} implies that over a field of characteristic zero, the inclusion $\textup{SmVar} \to \var$ of the category of smooth varieties into the category of all varieties induces an equivalence 
\begin{equation}\label{eqabdostelling}
\sh(\var;\set) \xrightarrow{\sim} \sh(\textup{SmVar};\set),
\end{equation}
where $\textup{SmVar}$ has the topology associated to the cd-structure of blowup squares in $\textup{SmVar}$, and $\var$ the topology associated to the cd-structure $A$. It follows that for $\C$ a complete $\infty$-category, there is an equivalence between $\infty$-categories $\C$-valued hypersheaves on $\textup{SmVar}$ and on $\var$. This is the modernised version of \cite[(2.1.5)]{g-na} which was also proven by Abdó Roig Maranges in his PhD thesis \cite{Abdo}.
\end{rmk}

\begin{lem}\label{lemmamainresult2}
For $k$ any field, and $\C$ a complete and cocomplete pointed $\infty$-category, the inclusion $i:\comp \to \comp_0$ induces an equivalence of $\infty$-categories 
$$i^*:\hsh(\comp_0;\C)_\emptyset \xrightarrow{\sim} \hshv(\comp;\C). $$
\end{lem}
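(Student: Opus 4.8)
The plan is to combine the identification of $\C$-valued hypersheaves with (c\nobreakdash-)excisive presheaves from Proposition \ref{prophypersheaveszijnexcisive} with the presheaf-level equivalence of Theorem \ref{thmpresheaveforaddedzeromorph}. Since $\comp$ has the strict initial object $\emptyset$ and $\C$ is pointed, Theorem \ref{thmpresheaveforaddedzeromorph} provides an equivalence $i^*:\psh(\comp_0;\C)_\emptyset \xrightarrow{\sim} \psh(\comp;\C)_\emptyset$. First I would invoke Proposition \ref{prophypersheaveszijnexcisive}: the cd-structure $A^\textup{comp}$ on $\comp$ is complete, regular and compatible with the dimension function, so $\hsh(\comp;\C)=\psh_\exc(\comp;\C)$ is the full subcategory of $\psh(\comp;\C)$ on presheaves sending abstract blowup squares of compact varieties to pullback squares and $\emptyset$ to a terminal object, which, $\C$ being pointed, is a zero object; in particular $\hsh(\comp;\C)\subseteq\psh(\comp;\C)_\emptyset$. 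Likewise, $A^\textup{comp}$ on $\comp_0$ is c\nobreakdash-complete, c\nobreakdash-regular and compatible with the dimension function, so $\hsh(\comp_0;\C)=\psh_\exc(\comp_0;\C)$ is the full subcategory of $\psh(\comp_0;\C)$ on presheaves sending abstract blowup squares to pullback squares, and hence $\hsh(\comp_0;\C)_\emptyset$ is the full subcategory of $\psh(\comp_0;\C)_\emptyset$ on those presheaves.

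Next I would show that the equivalence $i^*$ of Theorem \ref{thmpresheaveforaddedzeromorph} carries this subcategory of $\psh(\comp_0;\C)_\emptyset$ onto the subcategory $\psh_\exc(\comp;\C)\subseteq\psh(\comp;\C)_\emptyset$. The key point, and the only place where the structure of $\comp_0$ enters, is that an abstract blowup square of compact varieties, viewed as a square in $\comp_0$, has all four of its morphisms equal to genuine morphisms of varieties (closed immersions and proper maps), never to an adjoined zero morphism. Consequently, for any presheaf $F:\comp_0^\op \to \C$ and any such square $Q$, the square $F(Q)$ in $\C$ literally coincides with $(i^*F)(Q)$. Therefore $F$ sends every abstract blowup square to a pullback square if and only if $i^*F$ does; and if $F(\emptyset)$ is a zero object then so is $(i^*F)(\emptyset)=F(\emptyset)$, whence $F\in\hsh(\comp_0;\C)_\emptyset$ if and only if $i^*F\in\hsh(\comp;\C)$.

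Finally I would assemble the conclusion: the functor $i^*:\psh(\comp_0;\C)_\emptyset \to \psh(\comp;\C)_\emptyset$ is fully faithful and essentially surjective, so its restriction to the full subcategory $\hsh(\comp_0;\C)_\emptyset$ is fully faithful, and by the previous paragraph its essential image is exactly $\hsh(\comp;\C)$ (using that the conditions of being (c\nobreakdash-)excisive and of sending $\emptyset$ to a zero object are invariant under equivalence); hence $i^*$ restricts to the asserted equivalence $\hsh(\comp_0;\C)_\emptyset \xrightarrow{\sim} \hsh(\comp;\C)$. I do not expect a serious obstacle: the substantial work is already contained in Proposition \ref{prophypersheaveszijnexcisive} and Theorem \ref{thmpresheaveforaddedzeromorph}, and what remains is the elementary observation that the distinguished squares of $\comp_0$ avoid the adjoined zero morphisms. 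The only point requiring care is the book-keeping of topologies, namely the plain $\tau_{A^\textup{comp}}$ on $\comp$ versus the c\nobreakdash-topology $\tau_{A^\textup{comp}}^c$ on $\comp_0$, the latter being forced because in $\comp_0$ the object $\emptyset$ is no longer a strict initial object.
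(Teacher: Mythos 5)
Your proposal is correct and follows essentially the same route as the paper: apply Theorem \ref{thmpresheaveforaddedzeromorph} to get the presheaf-level equivalence $\psh(\comp_0;\C)_\emptyset \simeq \psh(\comp;\C)_\emptyset$, observe that it restricts to the excisive presheaves because the abstract blowup squares of $\comp_0$ involve no adjoined zero morphisms, and conclude via Proposition \ref{prophypersheaveszijnexcisive}. The paper states the restriction step as a one-line observation; you have merely spelled it out, together with the correct book-keeping of $\tau_{A^{\textup{comp}}}$ versus $\tau_{A^{\textup{comp}}}^c$ and the identification $\hsh(\comp;\C)_\emptyset=\hsh(\comp;\C)$.
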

\begin{proof}
By Proposition \ref{prop:sheaves_zero_object}, there is an equivalence of categories of $\C$-valued presheaves  
$$\psh(\comp_0;\C)_\emptyset \to \psh(\comp;\C)_\emptyset.$$
We observe that this equivalence restricts to an equivalence of categories of  $\C$-valued excisive presheaves with respect to $A^\textup{comp}$, and therefore by Proposition \ref{prop:C_valued_hypersheaves_excisive} an equivalence of categories of $\C$-valued hypersheaves $$\hsh(\comp_0;\C)_\emptyset \simeq \hshv(\comp;\C)_\emptyset =  \hshv(\comp;\C).$$
\end{proof}

\begin{lem}\label{lemmamainresult3}
Over any field, the inclusion $\comp_0 \to \span$ induces an equivalence  
$$ \shv(\span;\set) \xrightarrow{\sim}\shv(\comp_0;\set).$$\end{lem}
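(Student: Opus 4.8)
The plan is to apply the Comparison Lemma of \cite{comparison} to the fully faithful inclusion $j:\comp_0 \to \span$, exactly as was done for $\smcomp \to \comp$ in Lemma \ref{lemmamainresult1}, but now being careful about the fact that the topologies involved are c-topologies rather than ordinary topologies associated to cd-structures. The two conditions to verify are: (1) every covering sieve of an object $X$ of $\span$, after pullback, is refined by a sieve generated by morphisms coming from $\comp_0$ — more precisely, that for every $X\in\span$ there is a $\tau^c_{A\cup L}$-cover of $X$ by objects in the image of $j$ with the morphisms in the image of $j$, in a way compatible with pullbacks; and (2) that $j$ is \emph{cocontinuous} (or satisfies the relevant covering-lifting property), so that restriction and left Kan extension are inverse equivalences on sheaf categories. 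Since $j$ is essentially surjective on objects (every variety is an object of $\comp_0$ after we forget properness is not required — wait, no: $\comp_0$ has only \emph{compact} varieties as objects, so $j$ is \emph{not} essentially surjective), the key geometric input is Nagata compactification: every variety $X$ admits an open immersion into a compact variety $\bar X$, giving a localisation square exhibiting $X$ as covered, in the c-topology, by the compact varieties $\bar X$ and $\bar X\setminus X$.

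Concretely, I would proceed as follows. First, using Remark \ref{rmkcompactelocalisation}, reduce the cd-structure $L$ on $\span$ to the sub-cd-structure $L''$ of localisation squares with compact total space; combined with the abstract-blowup cd-structure $A$, Lemma \ref{lemmadatvoevodskybestevenzelfhadkunnendoen} shows that $A\cup L$ and $A\cup L''$ have the same associated c-topology and that $A\cup L''$ is c-complete. Now every object and every \emph{distinguished square} of $A\cup L''$ lives, up to the open-immersion legs, in the image of $j$: the total spaces, and the closed/proper parts, are all compact varieties, and the localisation square for $X\subseteq \bar X$ has its nontrivial data $\bar X$ and $\bar X\setminus X$ in $\comp_0$. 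Thus the simple $\tau^c_{A\cup L''}$-covers of any variety $X$ are generated — after one Nagata step — by morphisms out of compact varieties, i.e. by morphisms that factor through $j$. This verifies that $j$ is a \emph{continuous} and \emph{dense} morphism of sites in the sense needed: every $\tau^c_{A\cup L}$-cover of an object of $\span$ is refined by a cover by objects of $\comp_0$, functorially in pullbacks, and the c-topology on $\comp_0$ induced by restriction is exactly $\tau^c_{A^{\comp}}$.

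The remaining step is to check that the induced restriction functor $j^*:\shv(\span;\set)\to\shv(\comp_0;\set)$ is an equivalence, with inverse given by the restriction of the presheaf left Kan extension along $j$ followed by sheafification. Because $j$ is fully faithful, this reduces to: (a) a $\tau^c_{A\cup L}$-sheaf on $\span$ is determined by its restriction to $\comp_0$, which follows from c-completeness of $A\cup L''$ and the fact that every representable $y_X$ on $\span$ is a $\tau^c_{A\cup L}$-colimit (via Remark \ref{rmkpushoutrepr}, using c-regularity from Lemma \ref{lemcregularAL}) of representables $y_{\bar X}, y_{\bar X\setminus X}$ coming from $\comp_0$; and (b) conversely, every sheaf on $\comp_0$ extends to one on $\span$ — here one defines, for a general variety $X$, $F(X):=F(\bar X)\times_{F(\bar X\setminus X)}\{*\}$ using a Nagata compactification, and checks well-definedness (independence of $\bar X$) and functoriality in spans using that any two compactifications are dominated by a third and that $F$ is excisive for abstract blowups on $\comp_0$. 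The main obstacle, as I see it, is precisely this well-definedness and functoriality check in (b): one must show that the assignment $X\mapsto F(\bar X)\times_{F(\bar X\setminus X)}\{*\}$ is independent of the compactification and sends composites of spans to composites — this is where the abstract-blowup descent on $\comp_0$, the localisation-square relations, and the elementary pullback lemmas (Lemma \ref{pullbacks}, Lemma \ref{lempullbacksinspan}) all have to be combined, and it is the genuinely site-theoretic heart of the argument rather than a formal consequence of the Comparison Lemma. Everything else is bookkeeping with simple covers and Nagata.
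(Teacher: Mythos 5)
Your overall strategy --- verify the hypotheses of the Comparison Lemma for $i:\comp_0\to\span$, using Nagata compactification and the reduction from $L$ to the cd-structure $L''$ of Remark \ref{rmkcompactelocalisation} --- is the same as the paper's, and your verification of local surjectivity on objects (every variety is covered by a compactification together with $\emptyset$) is correct. But there are two genuine gaps. First, the inclusion $\comp_0\to\span$ is \emph{not} full, contrary to what you assert twice: a span $X\hookleftarrow U\to Y$ between compact varieties with $\emptyset\neq U\neq X$ is a morphism of $\span$ that is neither a proper morphism $X\to Y$ nor the zero morphism, hence not in the image of $\comp_0$. So the Comparison Lemma's \emph{local fullness} condition is a real condition that must be checked; the paper does this by observing that such a $U$ is proper over $k$ (being proper over the compact $Y$), hence closed as well as open in $X$, so that $\{U\to X,\ X\setminus U\to X\}$ is an abstract-blowup cover of $X$ in $\comp_0$ over which the span decomposes into the morphisms $U\xrightarrow{p}Y$ and $X\setminus U\xrightarrow{0}Y$ of $\comp_0$. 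Your later step ``because $j$ is fully faithful, this reduces to\dots'' therefore rests on a false premise.

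Second, you correctly list cocontinuity among the hypotheses but never verify it; instead you pivot to constructing the inverse functor by hand via $F(X):=F(\bar X)\times_{F(\bar X\setminus X)}F(\emptyset)$, and you explicitly leave the well-definedness and functoriality of that construction --- which you yourself identify as the main obstacle --- unresolved. That by-hand route could be carried out, but as written it is a gap, and it is also unnecessary: once continuity, cocontinuity, local fullness, local faithfulness and local surjectivity on objects are established, the Comparison Lemma delivers the equivalence with no further construction. The cocontinuity check is in fact the substantive part of the paper's proof: an induction over simple $A\cup L''$-covers showing that for any simple cover $\{X_i\xdashrightarrow{U_i}Y\}$ in $\span$ of a compact $Y$, the set of morphisms of $\comp_0$ whose images factor through some $X_i\xdashrightarrow{U_i}Y$ contains a cover of $Y$ in $\comp_0$ (pulling back a cover of $X$ along the open immersion $U\hookrightarrow X$ in the localisation case, and composing covers of $Y$ and $C$ with $p$ and $i$ in the abstract-blowup case). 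This induction, together with the local fullness verification above, is what your proposal is missing.
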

\begin{proof}
We verify that the inclusion $i:\comp_0 \to \span$ satisfies the conditions of the Comparison Lemma in \cite{comparison}. It is clear that $i$ is cover preserving. 

To check that $i$ is locally full, we observe that for a span $X \dashrightarrow_U Y$ between compact varieties, $X$ can be written as disjoint union of compact varieties $U \amalg X \setminus U $, since $U$ is proper over $Y$ and therefore compact and hence a closed subset of $X$. The square
\begin{center}
    \begin{tikzcd}
       \emptyset \arrow[r] \arrow[d]& X \setminus U \arrow[d]\\
       U \arrow[r] & X
    \end{tikzcd}
\end{center}
is an abstract blowup square, so $\{U \to X, X \setminus U \to X \}$ forms a cover of $X$ in $\comp_0$. Restricted to this cover, the span $X{\dashrightarrow}_U Y$ in $\span$ is the image of the morphisms $0:X\setminus U \rightarrow Y$ and $p:U \rightarrow Y$ in $\comp_0$.

The inclusion $i:\comp_0 \to \span$ is faithful and therefore in particular locally faithful. Next we check that $i$ is locally surjective on objects. Let $X$ be an arbitrary variety and $Y$ a compactification, then clearly $\{\emptyset \dashrightarrow X, Y \dashrightarrow X \}$ is a cover of $X$ in $\span$ by objects in the image of $i$.

Lastly we check that $i$ is co-continuous. Recall the cd-structure $L''$ from Remark \ref{rmkcompactelocalisation}. The cd-structures $A\cup L$ and $A\cup L''$ generate the same associated c-topology, so here we work with the latter cd-structure. Since $A \cup L''$ is c-complete, any cover in $\span$ contains a simple $A \cup L''$ cover. By induction over simple covers, we show that for such a cover $\{X_i {\dashrightarrow}_{U_i} Y \}$ with $Y$ compact, the set of morphisms $Z \to Y$ in $\comp_0$ whose image factors through one of the $X_i {\dashrightarrow}_{U_i} Y $ contains a cover in $\comp_0$. We denote the set of simple covers for which this holds by $S$. 

It is clear that covers containing an isomorphism are in $S$. Now consider a localisation square as in (\ref{eq:localisationsquare}) that is in $L''$, with $U$ compact; note that the open inclusion $U \to X$ is also a proper morphism. Suppose $\{X_i {\dashrightarrow}_{U_i} X \}$ and $\{ V_j {\dashrightarrow}_{\emptyset} \emptyset\}$ are in $S$. Then there is cover $\{f_k:W_k \rightarrow X\}$ in $\comp_0$ such that every $i(f_k)$ factors through a $X_i {\dashrightarrow}_{U_i} X$. The pullbacks $W_k \times_U U \to U$ form a cover of $U$ in $\comp_0$. We note that in $\span$, $i(W_k \times_U U \to U)$ is equal to the composition 
$$W_k \times_U U \dashrightarrow_{W_k \times_U U  } W_k \dashrightarrow_{W_k \times_U U} U .$$
Since 
$W_k  \dashrightarrow_{W_k \times_U U} U$, which is $i(f_k)$ composed with $X {\dashrightarrow}_U U$, factors through $X_i {\dashrightarrow}_{U_i} X {\dashrightarrow}_U U$, it follows that $W_k \times_U U \dashrightarrow_{W_k \times_U U} U$ does as well. It follows that the simple cover 
$$\{X_i{\dashrightarrow}_{U_i} X \dashrightarrow U,  V_j {\dashrightarrow}_{\emptyset} \emptyset \dashrightarrow U \}$$ 
is in $S$.

Now consider an abstract blowup square of the form (\ref{eq:absblowupsquare}) in $\span$ with $X$ compact. Note that in that case $Y$ and $C$ are compact as well. Suppose $\{X_i={\dashrightarrow}_{U_i} Y \}$ and $\{ Z_j {\dashrightarrow}_{V_j} C\}$ are in $S$. Then there exist covers 
$\{X'_k\to Y \} $ and $\{ Z'_l \to C\}$ in $\comp_0$ such that their images factor through some $X_i{\dashrightarrow}_{U_i} Y $ or some $Z_j {\dashrightarrow}_{V_j} C$ respectively. Composing these covers in $\comp_0$ with $p$ and $i$ respectively gives a cover of $X$ such that the image of every morphism factors through a morphism in 
$$\{X_i {\dashrightarrow}_{U_i} Y \dashrightarrow X,  Z_j {\dashrightarrow}_{V_j} C \dashrightarrow X \},$$ therefore the latter is in $S$. This completes the induction, and thus we have shown that $i$ is co-continuous. It follows that $i$ induces an equivalence of categories of sheaves

$$ \shv(\span;\set) \xrightarrow{\sim}\shv(\comp_0;\set).$$

\end{proof} 

The following proposition implies that we can formulate the main result in a simpler way if $\C$ is stable (see Theorem \ref{thm:main_intro}).

\begin{prop}\label{propsimplerstatement}
For $\C$ a stable $\infty$-category, if $F:\span^\op \to \C$ satisfies the localisation property then $F$ also satisfies descent for abstract blowups.
\end{prop}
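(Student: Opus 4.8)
The plan is to deduce the pullback-square condition from two instances of the localisation property, via the standard fact that in a stable $\infty$-category a commuting square is a pullback square if and only if the induced map on the fibres of its two vertical morphisms is an equivalence (equivalently, pullback squares are exactly the squares of vanishing total fibre).

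First I would fix an abstract blowup square as in (\ref{absblowupsquare}) and record the following commuting diagram in $\span$:
\begin{center}
\begin{tikzcd}
E \arrow[r,dashed]\arrow[d,dashed] & Y \arrow[r,dashed]\arrow[d,dashed] & Y\setminus E \arrow[d, "\sim", dashed] \\
C \arrow[r,dashed] & X \arrow[r,dashed] & X\setminus C.
\end{tikzcd}
\end{center}
Here the left-hand square is the given abstract blowup square, viewed in $\span$ via the proper morphisms $E\to Y$, $E\to C$, $Y\xrightarrow{p}X$ and $C\xrightarrow{i}X$; the two horizontal maps of the right-hand square are the open-restriction spans $Y\xdashrightarrow{Y\setminus E}Y\setminus E$ and $X\xdashrightarrow{X\setminus C}X\setminus C$; and the right-hand vertical map is the isomorphism $Y\setminus E\xrightarrow{\sim}X\setminus C$ induced by $p$, which is an isomorphism in $\span$. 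Commutativity of the left square is exactly Cartesianness of the abstract blowup square (giving $E\to Y\to X=E\to C\to X$), and commutativity of the right square follows by unwinding the definition of composition of spans: both composites are the span $Y\xdashrightarrow{Y\setminus E}X\setminus C$ determined by the restriction of $p$.

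Applying the contravariant functor $F$ then yields a commuting diagram in $\C$
\begin{center}
\begin{tikzcd}
F(X\setminus C) \arrow[r]\arrow[d, "\sim"'] & F(X) \arrow[r]\arrow[d] & F(C)\arrow[d] \\
F(Y\setminus E) \arrow[r] & F(Y) \arrow[r] & F(E)
\end{tikzcd}
\end{center}
in which the left-hand vertical map is an equivalence. Now the localisation property applied to the open subvariety $X\setminus C\subseteq X$ says precisely that the top row is a fibre sequence, and applied to $Y\setminus E\subseteq Y$ that the bottom row is a fibre sequence. Thus the left-hand square exhibits a map of fibre sequences, and under the identifications $F(X\setminus C)\simeq\fib(F(X)\to F(C))$ and $F(Y\setminus E)\simeq\fib(F(Y)\to F(E))$ the induced map $\fib(F(X)\to F(C))\to\fib(F(Y)\to F(E))$ is carried to $F$ of the isomorphism $Y\setminus E\cong X\setminus C$, hence is an equivalence. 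By the fact recalled above, it follows that the square
\begin{center}
\begin{tikzcd}
F(X) \arrow[r]\arrow[d]& F(Y)\arrow[d]\\
F(C) \arrow[r]& F(E)
\end{tikzcd}
\end{center}
is a pullback square, i.e.\ $F$ satisfies descent for the chosen abstract blowup square.

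The one point that requires genuine care is the bookkeeping: one must keep straight the variance of $F$ and the direction of each span morphism while verifying that the diagram in $\span$ commutes, and one must make sure the comparison map between the two fibre sequences is literally the equivalence $F(X\setminus C)\to F(Y\setminus E)$. Beyond that the argument is formal, resting only on stability of $\C$ (so that pullback squares are detected on vertical fibres) and on the localisation property (so that those fibres are the values of $F$ on the open complements); I do not foresee any real obstruction.
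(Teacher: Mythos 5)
Your proof is correct and takes essentially the same approach as the paper: the paper's own proof observes that the fibres of the two vertical maps are both $F(X\setminus C)\simeq F(Y\setminus E)$ by the localisation property and concludes by stability. You have merely spelled out the one point the paper leaves implicit, namely that the comparison map between the fibres is induced by the isomorphism $Y\setminus E\cong X\setminus C$ and hence an equivalence, which is a worthwhile (and correct) elaboration rather than a different argument.
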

\begin{proof}
Let $F$ be an $\infty$-presheaf which has the localisation property. For an abstract blowup square of the form (\ref{eq:absblowupsquare}), we observe that in the square
\begin{center}
    \begin{tikzcd}
       F(X) \arrow[r] \arrow[d] & F(Y)\arrow[d] \\
       F(C) \arrow[r] & F(E)
    \end{tikzcd}
\end{center}
the fibres of the vertical maps are equivalent, since $X \setminus C \cong Y \setminus E$. Since $\C$ is stable, it follows that the square is a pullback. 
\end{proof}

We finish this section by giving the following presentation of the Grothendieck ring of varieties.
\begin{prop}\label{propreprvanringofvar}
The Grothendieck ring of varieties over an arbitrary field $k$ can be given by generators $[X]$ for $X$ a compact variety, subject to the the relations $[\emptyset]=0$ and  $[E] + [X] = [C] + [Y]$ for every abstract blowup square of the form (\ref{eq:absblowupsquare}).
\end{prop}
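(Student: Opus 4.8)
The plan is to produce a ring isomorphism between $K_0(\var)$, defined via presentation (iii) — the classical scissor relations on all varieties — and the ring $R$ defined by the generators and relations in the statement, by constructing both maps explicitly and checking that they are mutually inverse. In one direction, send a compact variety $X$ to its class $[X]\in K_0(\var)$. This respects the relation $[E]+[X]=[C]+[Y]$: for an abstract blowup square of the form (\ref{absblowupsquare}) the scissor relations give $[X]=[C]+[X\setminus C]$ and $[Y]=[E]+[Y\setminus E]$, and $X\setminus C\cong Y\setminus E$, so $[X]-[C]=[Y]-[E]$; it clearly respects products and sends $[\emptyset]$ to $0$. Hence this defines a ring homomorphism $\psi\colon R\to K_0(\var)$.

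For the other direction, fix for each variety $X$ a compactification $\bar X$ (using Nagata) with boundary $\partial X:=\bar X\setminus X$, and set $e(X):=[\bar X]-[\partial X]\in R$. The first step is to show $e(X)$ is independent of the chosen compactification: given two, $\bar X_1$ and $\bar X_2$, the closure $\bar X_3$ of the diagonal copy of $X$ inside $\bar X_1\times\bar X_2$ is again a compactification of $X$ which maps properly to each $\bar X_i$ by an isomorphism over $X$, so the Cartesian square with bottom row $\partial X_i\hookrightarrow\bar X_i$, right column $\bar X_3\to\bar X_i$, and top-left corner $\partial X_3=\bar X_3\setminus X$ is an abstract blowup square; the relation in $R$ then reads $[\bar X_i]-[\partial X_i]=[\bar X_3]-[\partial X_3]$. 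The second step is an ``inclusion--exclusion'' identity in $R$: for $W$ compact with closed subvarieties $A,B\subseteq W$ and $A\cup B=W$, one has $[A]+[B]=[A\cap B]+[W]$. This is precisely the relation attached to the abstract blowup square with closed immersion $B\hookrightarrow W$, proper morphism $A\to W$, and exceptional locus $A\cap B$ (the morphism $A\to W$ restricts to the identity over $W\setminus B=A\setminus B$).

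Using these two facts I would show that $e$ is additive and multiplicative, hence descends to a ring homomorphism $\phi\colon K_0(\var)\to R$. For additivity, let $Z\subseteq X$ be closed with open complement $U$; pick a compactification $\bar X$ of $X$, let $\bar Z$ be the closure of $Z$ in $\bar X$ (so $\bar Z\cap X=Z$ and $\bar Z\setminus Z=\bar Z\cap\partial X$), let $\bar U$ be the closure of $U$ in $\bar X$, and put $W:=\bar Z\cup\partial X$; then $U=\bar X\setminus W$, so $\bar U\setminus U=\bar U\cap W$ and $\bar X=\bar U\cup W$. Expanding $[\bar X]-[\partial X]$ by inclusion--exclusion applied first to $\bar X=\bar U\cup W$ and then to $W=\bar Z\cup\partial X$, all the overlap terms rearrange to give
\[
[\bar X]-[\partial X]=\bigl([\bar U]-[\bar U\cap W]\bigr)+\bigl([\bar Z]-[\bar Z\cap\partial X]\bigr)=e(U)+e(Z).
\]
Multiplicativity is analogous but easier: $\bar X\times\bar Y$ compactifies $X\times Y$ with boundary $(\partial X\times\bar Y)\cup(\bar X\times\partial Y)$, and a single inclusion--exclusion step yields $e(X\times Y)=e(X)e(Y)$; moreover $e(\mathrm{pt})=[\mathrm{pt}]$ is the unit of $R$. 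Finally, $\psi$ and $\phi$ are inverse: $\psi(\phi([X]))=[\bar X]-[\partial X]=[X]$ in $K_0(\var)$ by the scissor relation, and $\phi(\psi([X]))=e(X)=[X]$ for $X$ compact, since then we may take $\bar X=X$ and $\partial X=\emptyset$.

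I expect the main obstacle to be the additivity step: choosing the compactifications of $U$ and $Z$ compatibly inside a single $\bar X$ and organising the inclusion--exclusion bookkeeping so that the overlap terms cancel exactly. The remaining verifications — that the squares used in the well-definedness of $e$ and in the inclusion--exclusion lemma genuinely satisfy the axioms of an abstract blowup square in the category of varieties (Cartesianness, properness, isomorphism away from the closed part) — are routine but must be carried out carefully.
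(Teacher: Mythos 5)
Your proposal is correct and follows essentially the same route as the paper: both directions are given by $[X]\mapsto[X]$ and $[U]\mapsto[\bar U]-[\bar U\setminus U]$, with independence of the choice of compactification proved via a dominating compactification and the abstract blowup square it produces (the Cartesianness check you flag as routine is exactly the paper's Lemma \ref{pullbacks}). Your explicit inclusion--exclusion verification that $e$ respects the scissor relation and products fills in a step the paper dismisses as clear, which is a welcome addition rather than a deviation.
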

\begin{proof}
Let $K_0(\var)$ denote the ring generated by varieties $X$, subject to the relation $[X] = [U] + [X \setminus U]$ whenever $U$ is an open subvariety of $X$. Let $K_0(\comp)$ denote the ring generated by compact varieties $X$, subject to the relations $[\emptyset]=0$ and  $[E] + [X] = [C] + [Y]$ for every abstract blowup square of the form (\ref{eq:absblowupsquare}). The inclusion $\comp \to \var$ induces a map $f:K_0(\comp) \to K_0(\var)$. This map is well defined: whenever $U$ is an open subvariety of $X$ and both $U$ and $X$ are compact, it follows that $X$ is a disjoint union $X = U \amalg X \setminus U$. Then the square
\begin{center}
    \begin{tikzcd}
       \emptyset \arrow[r] \arrow[d] & X \setminus U \arrow[d]\\
       U \arrow[r] & X
    \end{tikzcd}
\end{center}
is an abstract blowup square, so $[X] = [U]+[X \setminus U]$ in $K_0(\comp)$. 

We show that there is an inverse $g:K_0(\var) \to K_0(\comp)$. For $U$ a variety, let $X$ be an arbitrary compactification of $U$. We define $$g([U])= [X]-[X \setminus U].$$ To show that this map is well-defined, we need to show that $[X]-[X\setminus U] = [Y]-[Y \setminus U]$ for $Y$ any other compactification of $U$. Given two compactifications $X$ and $Y$ of $U$, by Nagata's compactification theorem we can factor $U \to X \times Y$ as an open immersion followed by a proper morphism $U \to Z \to X \times Y$, where we may assume that $U$ is dense in $Z$. Thus, we have a dense compactification that dominates both $X$ and $Z$. Now we can apply Lemma \ref{pullbacks} to the square
\begin{center}
    \begin{tikzcd}
       U \arrow[d,hook] \arrow[r, "="] & U \arrow[d,hook]\\
       Z \arrow[r]& X
    \end{tikzcd}
\end{center}
and conclude that it is a pullback square. It follows that there is a pullback square 
\begin{center}
    \begin{tikzcd}
       Z \setminus U \arrow[r]\arrow[d]& Z \arrow[d]\\
       X\setminus U \arrow[r] & X
    \end{tikzcd}
\end{center}
which is an abstract blowup, therefore we have $[X]+[Z \setminus U] = [Z]+[X \setminus U]$. Similarly, $[Y] + [Z \setminus U] = [Z]+[Y \setminus U]$, and the relation $[X]-[X\setminus U] = [Y]-[Y \setminus U]$ follows. 
It is clear that $f$ and $g$ are each other's inverse.
\end{proof}
\begin{rmk}
From the argument above it follows that there is yet another presentation of $K_0(\var)$, which is given by compact varieties $X$ as generators, subject to the relation $[X]-[Y] = [X']-[Y']$ whenever $Y \subseteq X$ and $Y'\subseteq X'$, and $X \setminus Y \cong X'\setminus Y'$. 

\end{rmk}

\section{Applications}\label{sectionapp}

Over an arbitrary field $k$, the equivalence
$$\hsh(\span;\C)_{\emptyset} \xrightarrow{\sim} \hsh(\comp;\C)$$ implies that any functor $F:\comp^\op \to \C$ that satisfies descent for abstract blowups has a unique ``compactly supported'' extension $F_c:\span^\op \to \C$ which has the localisation property. Applying this to any cohomology theory that satisfies descent for abstract blowups, we obtain the associated compactly supported cohomology theory. If $k$ is of characteristic zero, then it suffices to have a functor defined on $\smcomp$ which satisfies descent for blowups.

\subsection{Compactly supported homotopy algebraic K-theory} \label{subsectionalgK}

Due to Thomason we know that the algebraic K-theory spectrum, restricted to smooth and compact varieties, satisfies descent for blowups \cite{thomason}. Thus, by applying Theorem \ref{thmmain} we obtain the following result.
\begin{thm}[{\cite[Theorem 7]{gilletsoule}}]
    There is a functor
    $$\mathbf{HK}^c:\span^\op \to \textup{Spectra}$$
    which extends the K-theory spectrum on $\smcomp$, and satisfies the localisation property and descent for abstract blowups.
\end{thm}
This gives an alternative construction of the compactly supported homotopy K-theory spectrum defined in \cite{gilletsoule}. Note that $\mathbf{HK}^c$ does \textit{not} coincide with algebraic K-theory on all of $\comp$, since the latter does not satisfy descent for all abstract blowup squares. However, by Haesemeyer's result that the homotopy invariant K-theory spectrum $\mathbf{HK}$ satisfies descent for abstract blowups \cite[Theorem 3.5]{Haes}, it follows that $\mathbf{HK}^c$ coincides with homotopy K-theory on $\comp$.

\subsection{The Künneth ismorphism, Mayer-Vietoris, and the weight complex of motives}
For $\mathcal{X}$ either one of the categories $\smcomp$, $\comp$ or $\span$, and $\C$ a pointed symmetric monoidal $\infty$-category, we say that an $\infty$-presheaf
$F:\mathcal{X}^\op\to\C  $
has \textit{Künneth isomorphisms} if for all $X$ and $Y$ in $\mathcal{X}$ there is an equivalence $$k_{X,Y}:F(X) \otimes F(Y)\to F(X\times Y).$$
\begin{prop}\label{propkunneth}
    Let $\C$ be a complete and cocomplete pointed symmetric monoidal $\infty$-category, where $\otimes$ preserves fibres in each variable. Then for $F:\span^\op\to \C$ in $\hsh(\span; \C)_\emptyset$, $F$ has Künneth isomorphisms if and only if its restriction to $\comp^\op$ does. 
\end{prop}
\begin{proof}
 Assume that $F$ has Künneth isomorphisms $k_{X,Y}$ for $X,Y$ both compact.  To show that an equivalence $k_{X,Y}:F(X) \otimes F(Y) \to F(X\times Y)$ exists for arbitrary $X$ and $Y$ in $\span$, first assume that $Y$ is compact. Let $\overline{X}$ be a compactification of $X$ with complement $C$. From the fibre sequences
$$F(X) \otimes F(Y) \to F(\bar{X}) \otimes F(Y) \to F(C) \otimes F(Y) $$
and
$$F(X\times Y) \to F(\bar{X}\times Y) \to F(C\times Y)  $$
and the existence of $k_{\overline{X},Y}$ and $k_{C,Y}$, we deduce that a morphism 
$$k_{X,Y}:F(X)\otimes F(Y) \to F(X \times Y) $$ exists, which is an isomorphism if $k_{\overline{X},Y}$ and $k_{C,Y}$ are. Having shown the Künneth formula when one of the factors is compact, we now know that $k_{\overline{X},Y}$ and $k_{C,Y}$ exist for arbitrary $Y$, and therefore $k_{X,Y}$ as well.  
\end{proof}

\begin{rmk}\label{rmkkunneth}
    Note that in the proposition above, the Künneth isomorphisms are not required to be functorial in any way. One could hope for an improvement of Theorem \ref{thmmain} for monoidal functors whose underlying functor is a hypersheaf. Such a theorem exists for extending functors from $\smvar$ to $\var$, see \cite[Proposition 6.6]{CH}. The same arguments show that for $\C$ a symmetric monoidal $\infty$-category, restriction induces an equivalence
\begin{equation}\label{eqkunneth}
    \Fun^\otimes_{\tau_{A^\textup{comp}}}(\comp;\C) \xrightarrow{\simeq} \Fun^\otimes_{\tau_B}(\smcomp;\C)
\end{equation}
    
    of categories of strong monoidal functors whose underlying functor is a hypersheaf.

\end{rmk}

Let $F$ be an invariant of varieties, taking values in any $\infty$-category, which is covariantly functorial in open immersions. We say that $F$ satisfies \textit{Mayer-Vietoris} if for $X$ a variety and $U,V$ open subvarieties such that $X = U \cup V$, the square
\begin{equation}\label{MVsquare}
      \begin{tikzcd}
    F(U\cap V)\arrow[r] \arrow[d] & F(U) \arrow[d]\\
    F(V) \arrow[r]& F(X)
    \end{tikzcd}
\end{equation}

is a pullback square.

\begin{prop}\label{propMV}
Every $F$ in $\hsh(\span;\C)_\emptyset$ satisfies Mayer-Vietoris.
\end{prop}
\begin{proof}
Let $X$ be a variety and $U,V$ open subvarieties such that $X = U \cup V$. 
We can extend the square (\ref{MVsquare}) to a diagram
\begin{center}
    \begin{tikzcd}
    F(U\cap V)\arrow[r] \arrow[d] & F(U) \arrow[d] \arrow[r] & F(X \setminus V) \arrow[d]\\
    F(V) \arrow[r]& F(X)\arrow[r] & F(X\setminus V)
    \end{tikzcd}
\end{center}
where the horizontal rows are (co)fibre sequences because of the localisation property. In a stable $\infty$-category, a square is a pullback square if and only if the cofibres of either of the pairs of parallel arrows are equivalent. This is clearly the case here. 
\end{proof}

A use of Proposition \ref{propMV} and Proposition \ref{propkunneth} is illustrated by the following proposition, which recovers the weight complex $W(X)$ for a variety $X$, as defined in \cite{gilletsoule}. Let $\mathbf{M}$ be the category of pure effective Chow motives. Let $K^b(\mathbf{M})$ be the category of bounded complexes of motives, localised at homotopy equivalences. Let
$$M:\smcomp^\op \to  K^b(\mathbf{M}) $$
be the composition of the obvious functors $\smcomp^\op \to \mathbf{M}$ and $\mathbf{M}\to K^b(\mathbf{M})$, taking a smooth and compact variety $X$ to the complex
$$ \dots \to 0 \to  (X,\id_X) \to 0 \to \cdots.$$
\begin{prop}[{\cite[Theorem 2]{gilletsoule}}]
The functor $M$ extends to a functor 
    $$W:\span^\op \to K^b(\mathbf{M})$$ 
    which satisfies descent for abstract blowups, localisation, and Mayer-Vietoris, and has Künneth isomorphisms. 
\end{prop}
\begin{proof}
By the blowup formula for motives \cite{Manin}, we see that $M$ satisfies descent for blowups. Theorem \ref{thmmain} then gives a hypersheaf $W:\span^\op \to K^b(\mathbf{M})$ which satisfies descent for abstract blowups and localisation, and by Proposition \ref{propMV} also Mayer-Vietoris. It is clear that $M$ has Künneth isomorphisms, so by the equivalence (\ref{eqkunneth}) and Proposition \ref{propkunneth} this holds for $W$ as well.
\end{proof}

\subsection{Weight filtrations}\label{subsectionweights}

Let $\C$ be a stable $\infty$-category equipped with a t-structure. Recall that a filtered object in $\C$, i.e., a functor $F:\mathbb{Z} \to \C$, gives rise to a spectral sequence in $\C^\heartsuit$ (see \cite[Section 1.2.2]{ha} for a detailed account). We call a morphism $\eta$ in $\textup{Fun}(\mathbb{Z},\C)$ an $E_1$-equivalence if the induced morphisms between the $E_1$-pages of the associated spectral sequences is a pointwise equivalence; equivalently (assuming the t-structure is non-degenerate), if $\eta$ induces equivalences on all the graded components, where
$$\textup{Gr}_n(F)  = \textup{cofib}(F(n-1) \to F(n)).$$
 Let $\textup{Fil}(\C)$ be the localisation of $\textup{Fun}(\mathbb{Z},\C)$ at the $E_1$-equivalences, as in \cite[Section 2]{gwilliam_pavlov}. Under the appropriate conditions, the spectral sequence assiociated to a filtered object $F$ converges to the homotopy groups of $\colim_n\ F(n)$ and induces a filtration on these groups. 
Let $\tau: \C \to \textup{Fil}(\C)$ be the functor that associates to an object $X$ in $\C$ the canonical filtration 
$$\dots  \to \tau_{\geq n+1}X \xrightarrow{f^n} \tau_{\geq n}X \xrightarrow{f^{n+1}}\tau_{\geq n-1} X \to \cdots$$
Let $\C_-$ be the full subcategory of $\C$ spanned by objects $X$ such that $\tau_{\leq n} X = 0 $ for $n \gg 0$. 
If $\C$ admits sequential colimits, and if these are compatible with the t-structure (according to \cite[Definition 1.2.2.12]{ha}), then for $X$ in $\C_{-}$ the spectral sequence induced by $\tau X$ converges, and the induced filtration on $\pi_n X$ is pure of weight $n$.

\begin{prop}\label{prop:weights}
  Let $\C$ be a complete and cocomplete stable $\infty$-category equipped with a t-structure, such that $\C$ admits sequential colimits compatible with the t-structure.  Let $$F:\smcomp^\op \to \C$$ in $\hsh(\smcomp;\C)$ be such that $F(X)$ is in $\C_-$ for all $X$ in $\smcomp$, and assume that for any blowup square
\begin{center}
    \begin{tikzcd}
       E_C(X) \arrow[r] \arrow[d] & Bl_C(X) \arrow[d, "p"]\\
       C \arrow[r, "i"] & X
    \end{tikzcd}
\end{center}
the induced exact triangle 
$$F(X) \xrightarrow{(F(i),F(p))} F(C) \oplus F(Bl_C(X)) \to F(E_C(X)) \to F(X)[1] $$
splits. Then there is a a functor
$$ F^\tau_c:\span^\op \to \textup{Fil}(\C) $$
that lifts the functor $F_c:\span^\op \to \C$ obtained by extending $F$ using Theorem \ref{thmmain}. For $X$ in $\smcomp$, the induced filtration on $\pi_nF(X)$ is pure of weight n.
\end{prop}
\begin{proof}
It is harmless to replace $\C$ by $\C_-$. Note that the functor $\tau$ defined above does in general not preserve pullbacks; but it is additive and therefore preserves split exact triangles, so the composition $\tau\circ F$ is in $\hsh(\smcomp;\textup{Fil}(\C))$. Then Theorem \ref{thmmain} produces an extension $F_c^\tau$ of this functor to $\span$ which lifts $F_c$.
\end{proof}
Proposition \ref{prop:weights} applies for cohomology theories with Gysin morphisms, in the sense that for $p:Bl_C(X)\to X$ a smooth blow-up, there is a map $g:F(Bl_C(X))\to F(X)$ such that $g\circ F(p)=\id_{F(X)}$. An example is the singular cochain functor $C^*(-;A)$; the induced filtration on singular cohomology with compact support coincides with the weight filtration defined in \cite{gilletsoule}, which is Deligne's filtration \cite{hodge_iii} for $A = \mathbb{Q}$. Another example is the functor $\mathbf{HK}^c$ from Section \ref{subsectionalgK}, because of the existence of Gysin morphisms for algebraic K-theory, see for example \cite{navarro_gysin}. The induced filtration is the weight filtration on algebraic K-theory with compact support, as defined in \cite{gilletsoule} and also \cite{p-rp}. 

\begin{rmk}
    We recall that in a stable $\infty$-category, an exact triangle 
$$ A \to B \to C \to A[1]$$
splits if and only if the morphism $C \to A[1]$ is trivial. Therefore suppose that $F:\smcomp^ \op \to \mathcal{C}$ is a cohomology theory that factors through a motivic category $\mathbf{M} $ equipped with a Chow weight structure (in the sense of Bondarko, see for example \cite{bondarko})
\begin{center}
\begin{tikzcd}
\smcomp^\op \arrow[rr, "F"] \arrow[rd, "h"'] &                               & \C \\
                                               & \mathbf{M} \arrow[ru, "F_0"'] &   
\end{tikzcd}
\end{center}
in such a way that $F_0$ commutes with suspension. The orthogonality condition for weight structures implies that for $C\hookrightarrow X$ a regular closed immersions of smooth and compact varieties, $\textup{Hom}(h(Bl_C(X),h(X)[1]) = 0$; it follows that $F$ sends blowups to split exact triangles so Proposition \ref{prop:weights} applies. This generalises \cite[Theorem 3.1(5)]{bondarkosurvey}.
\end{rmk}

\subsection{The K-theory spectrum of varieties}
As mentioned in the introduction, the three sites occurring in our main theorem correspond to different presentations of the Grothendieck ring of varieties $K_0(\var)$; we shall denote these by $K_0(\smcomp)$, $K_0(\comp)$ and $K_0(\span)$. We can consider the sites $\smcomp$, $\comp$ and $\span$ as \textit{squares categories}, as will be made precise in forthcoming joint work with Campbell, Merling and Zakharevich \cite{squares}. This allows us to construct K-theory spectra $K(\smcomp)$, $K(\comp)$ and $K(\span)$, whose $\pi_0$-groups can be canonically identified with the different presentations of $K_0(\var)$. The spectrum $K(\span)$ is equivalent to the K-theory spectrum of varieties $K(\var)$ as defined previously by Campbell and Zakharevich (\cite{zakharevich}, \cite{campbell}, \cite{campbell_zakharevich}). Moreover we can show
$$K(\comp)\simeq K(\var),$$
thus giving a presentation of the K-theory spectrum of varieties in terms of only compact varieties. The spectrum $K(\smcomp)$ can be seen as a Bittner-style presentation of $K(\var)$, but as of yet is not known to coincide with $K(\var)$. 

For $\X$ either of the sites $\smcomp$, $\comp$ or $\span$, the Yoneda embedding composed with sheafification and stabilization gives a functor
  $$\Sigma_+^\infty Ly:\mathcal{X} \to \mathrm{HSh}(\mathcal{X};\mathrm{Spectra})$$
  which sends distinguished squares to bicartesian squares, see Remark \ref{rmk:pushout_representable_hypersheaves}. In the upcoming work \cite{squares} we show how these functors induce maps of spectra
\begin{center}
    \begin{tikzcd}
     K(\span) \arrow[rr, "\mu_\span"] & & K(\mathrm{HSh}(\span;\mathrm{Spectra})_\emptyset) \arrow[d, "="] \\
     K(\comp) \arrow[rr, "\mu_\comp"] & &K(\mathrm{HSh}(\comp;\mathrm{Spectra})) \arrow[d, "="] \\
     K(\smcomp) \arrow[rr, "\mu_\smcomp"] & & K(\mathrm{HSh}(\smcomp;\mathrm{Spectra})) 
 \end{tikzcd} 
\end{center}
where the spectra on the right are the K-theory spectra of stable $\infty$-categories; we know all of these to be the same by Theorem \ref{thmmain}. 

It would be interesting to compute this subtle motivic measure, and it seems likely that $K(\hsh(\smcomp;\mathrm{Spectra}))$ is more computable than its counterpart $K(\hsh(\span;\mathrm{Spectra})_\emptyset)$. Studying this invariant might also help in understanding whether Bittner's theorem holds at the level of spectra, i.e., if $K(\smcomp)$ is equivalent to the spectra $K(\comp)$ and $K(\span)$.

\section*{Acknowledgements}
I would like to thank my PhD supervisor Dan Petersen for guiding me towards this area of research, answering many questions and providing useful comments on earlier versions of this paper. I am also grateful to Thomas Blom, Alice Hedenlund, Asaf Horev, David Rydh and Robin Benedikt Stoll for helpful conversations and comments. Lastly, I am indebted to anonymous referees for providing useful comments, and in particular for proposing an easier proof of Proposition \ref{prop:sheaves_zero_object}. 

This paper has benefitted from the support of the Institut Mittag-Leffler (Swedish Research Council grant no. 2016-06596) where I participated in the research programs ``Moduli and Algebraic Cycles'' and ``Higher algebraic structures in algebra, topology and geometry''. 

\sloppy

\bibliographystyle{alpha}
\bibliography{bibliography}

\end{document}